\numberwithin{equation}{section}
\newcommand{\prob}{\mathbb{P}}
\newcommand{\Ex}{\mathbb{E}}
\newcommand{\Rl}{\mathbb{R}}
\newcommand{\Cm}{\mathbb{C}}
\newcommand{\Qseq}{\{\mathsf{Q}_N\}_{N\ge 1}}
\newcommand{\X}{\{X_n\}_{n\ge 1}}
\DeclareMathOperator*{\infp}{\inf\vphantom{p}}
\newtheoremstyle{mystyle}
  {}
  {}
  {\itshape}
  {}
  {\bfseries}
  {.}
  { }
  {\thmname{#1}\thmnumber{ #2}\thmnote{ (#3)}}
\theoremstyle{mystyle}
\newtheorem{theorem}{Theorem}[section]
\newtheorem{lemma}{Lemma}[section]
\newtheorem{proposition}{Proposition}[section]
\newtheorem{definition}{Definition}[section]
\newtheorem{example}{Example}[section]
\newtheorem{remark}{Remark}[section]
\title{Large deviations for quadratic functionals of stable Gauss-Markov chains and entropy production}
\author{Marco Zamparo\footnote{Dipartimento di Fisica, Universit\`a degli Studi di Bari and INFN, Sezione di Bari,
    via Amendola 173, 70126 \newline \phantom{aaz} Bari, Italy. E-mail: \texttt{marco.zamparo@uniba.it}}

  \and
  
Massimiliano Semeraro\footnote{Dipartimento di Fisica, Universit\`a degli Studi di Bari and INFN, Sezione di Bari,
    via Amendola 173, 70126  \newline \phantom{aaz} Bari, Italy. E-mail: \texttt{massimiliano.semeraro@uniba.it}}}
\date{}
\begin{document}  
\maketitle

\begin{abstract}
In this paper we establish a large deviation principle for the entropy
production rate of possible non-stationary, centered stable
Gauss-Markov chains, verifying the Gallavotti-Cohen symmetry. We reach
this goal by developing a large deviation theory for quasi-Toeplitz
quadratic functionals of multivariate centered stable Gauss-Markov
chains, which differ from a perfect Toeplitz form by the addition of
quadratic boundary terms. 
\end{abstract}

\section{Introduction}

Let on a probability space $(\Omega,\mathscr{F},\prob)$ be given a
sequence $\X$ of random variables taking values in a Polish space
$\mathcal{X}$. For any integer $N\ge 1$, let
$\mu_N^+:=\prob[(X_1,\ldots,X_N)\in\cdot\,]$ and
$\mu_N^-:=\prob[(X_N,\ldots,X_1)\in\cdot\,]$ be the probability
measures on the Borel $\sigma$-field $\mathscr{B}(\mathcal{X}^N)$
induced by the direct process and the reverse process, respectively.
The \textit{entropy production rate} up to time $N$ is the real random
variable on $(\Omega,\mathscr{F},\prob)$ defined by
\begin{equation*}
e_N:=\begin{cases}
\frac{1}{N}\ln\Big[\frac{d\mu_N^+}{d\mu_N^-}(X_1,\ldots,X_N)\Big] & \mbox{if }
\mu_N^+\ll \mu_N^-,\\
+\infty & \mbox{otherwise}.
\end{cases}
\end{equation*}
The entropy production rate turns out to be a natural measure of
irreversibility since $e_N=0$ for all $N\ge 1$ if and only if the
sequence $\X$ is reversible, namely if and only if $(X_N,\ldots,X_1)$
is distributed as $(X_1,\ldots,X_N)$ for every $N$.  The use of the
entropy production rate to quantify the irreversibility of a
stochastic process was proposed by Kurchan \cite{Kurchan1} and in more
generality by Lebowitz, Spohn, and Maes \cite{LebowitzSpohn1,Maes1},
who extended the seminal work by Gallavotti and Cohen
\cite{GallavottiCohen1} in the context of deterministic dynamical
systems. Since then, the entropy production rate has become a basic
topic in non-equilibrium statistical physics
\cite{Searles1,Maes3,Maes2,BookEPR,Reid1,Ge1,Tome1,Landi1,Wang2015}.
The entropy production rate came out with a supposed symmetry
associated with its large fluctuations, which in fact was discovered
by Gallavotti and Cohen \cite{GallavottiCohen1} prompted by results of
computer simulations \cite{Evans1}. They dubbed this symmetry a
``fluctuation theorem''.  The appropriate formalism for describing the
large fluctuations of the entropy production rate is large deviation
theory \cite{Dembo,Frank1}.  The entropy production rate $e_N$ is said
to satisfy a \textit{large deviation principle} with the \textit{rate
  function} $I$ if there exists a function $I$ with compact level sets
such that for each Borel set $\mathcal{B}\subseteq\Rl$
\begin{equation*}
  -\inf_{w\in \mathcal{B}^o}\{I(w)\}\le\liminf_{N\uparrow\infty}\frac{1}{N}\ln\prob\big[e_N\in \mathcal{B}\big]
  \le \limsup_{N\uparrow\infty}\frac{1}{N}\ln\prob\big[e_N\in \mathcal{B}\big]\le -\inf_{w\in \bar{\mathcal{B}}}\{I(w)\},
\end{equation*}
where $\mathcal{B}^o$ and $\bar{\mathcal{B}}$ are the interior and the
closure of $\mathcal{B}$, respectively. The ``fluctuation theorem''
refers to a property of the function $I$. The rate function $I$ is
said to satisfy the \textit{Gallavotti-Cohen symmetry} if for all
$w\in\Rl$
\begin{equation*}
I(-w)=I(w)+w.
\end{equation*}
It has been pointed out that the Gallavotti-Cohen symmetry is an
intrinsic property of $I$, which is met whenever $e_N$ satisfies a
large deviation principle \cite{Ge1,BookEPR,Reid1}.

In this paper we investigate the large fluctuations of the entropy
production rate and the Gallavotti-Cohen symmetry for a possible
non-stationary, centered stable Gauss-Markov chain $\X$ valued in
$\mathcal{X}:=\Rl^d$ with any dimension $d\ge 1$. Thus, we assume that
there exists a \textit{drift matrix} $S\in\Rl^{d\times d}$ with
spectral radius $\rho(S)<1$ such that
\begin{equation}
  X_{n+1}=SX_n+G_n
  \label{evol_eq}
\end{equation}
for all $n\ge 1$, $\{G_n\}_{n\ge 1}$ being a sequence of
i.i.d.\ standard Gaussian random vectors valued in $\Rl^d$ and
independent of $X_1$. We suppose that $X_1$ is a Gaussian random
vector with mean zero and general positive-definite covariance matrix
$\Sigma_o$. The process $\X$ is stationary if and only if
$\Sigma_o=\Sigma_s:=\sum_{k\ge 0}S^k(S^\top)^k$, and it is reversible
if and only if $S$ is symmetric and $\Sigma_o=\Sigma_s$ \cite{Osawa}.
Stability corresponds to the hypothesis $\rho(S)<1$, which implies
that $\Sigma_s$ actually exists. Unstable Gauss-Markov chains for
which $\rho(S)\ge 1$ cannot achieve a stationary regime and obey a
substantially different mathematics, for which reason we leave them as
a future project.


The large deviation principle for the entropy production rate and the
Gallavotti-Cohen symmetry have been rigorously established for finite
Markov chains with discrete and continuous time \cite{BookEPR,Jiang1}.
The same has been done for multivariate stationary Ornstein-Uhlenbeck
processes with normal drift matrix
\cite{Jaksic1,Jaksic2,Budhiraja2021}, i.e.\ with a drift matrix that
commutes with its adjoint, and for a model of heat conduction through
a chain of anharmonic oscillators coupled to two reservoirs at
different temperatures \cite{Rey-Bellet1}. The mathematical tool
underlying these works is the G\"artner-Ellis theorem
\cite{Frank1,Dembo}, and the lack of a large deviation principle for
the entropy production rate of more general stochastic processes is
due to non-satisfiability of the hypotheses of that theorem.  An
attempt to overcome the limitations of the G\"artner-Ellis theorem has
been done for stationary diffusion processes, for which a large
deviation principle for the entropy production rate has been obtained
in the limit of vanishing noise by resorting to the classical
Freidlin-Wentzell theory \cite{Bertini1}.  The autoregressive model
(\ref{evol_eq}) we consider basically is the discrete-time version of
a $d$-dimensional centered Ornstein-Uhlenbeck process. The main
contribution of our work stems from the fact that we do not assume
that the drift matrix $S$ is normal or that the chain is
stationary. This generality prevents the use of the G\"artner-Ellis
theorem to get at a large deviation principle for the entropy
production rate $e_N$. The way we go around this key point is to
regard $e_N$ as a quadratic functional, and to establish a large
deviation principle for the class of quadratic functionals to which
the entropy production rate belongs via a time-dependent change of
probability measure. We need such a general principle to also tackle a
problem of large fluctuations in an active matter model \cite{noi}.
The following lemma provides the explicit expression of $e_N$ for the
model (\ref{evol_eq}) as a quadratic form. The simple proof is
reported in Appendix \ref{proof:quad}. We denote by
$\langle\cdot,\cdot\rangle$ the standard inner product of $\Rl^d$.
\begin{lemma}
  \label{lem:quad}
  Let $\X$ be a $d$-dimensional centered Gauss-Markov chain with drift
  matrix $S$ and initial positive-definite covariance $\Sigma_o$.  For
  each $N\ge 1$
  \begin{align}
    \nonumber
    Ne_N&=\frac{1}{2}\big\langle X_1,(I-\Sigma_o^{-1}-S^\top S)X_1\big\rangle+\frac{1}{2}\big\langle X_N,(\Sigma_o^{-1}+S^\top S-I)X_N\big\rangle \\
    \nonumber
    &+\sum_{n=2}^N\big\langle X_n,(S-S^\top)X_{n-1}\big\rangle.
\end{align}
\end{lemma}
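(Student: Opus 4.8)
The plan is to obtain $Ne_N$ by writing down explicitly the Lebesgue densities of the direct and reverse processes and forming their ratio. First I would observe that, by (\ref{evol_eq}), the conditional law of $X_n$ given $X_{n-1}=x$ is Gaussian with mean $Sx$ and identity covariance, while $X_1$ has the Gaussian density $p_1(x):=(2\pi)^{-d/2}(\det\Sigma_o)^{-1/2}\exp[-\frac{1}{2}\langle x,\Sigma_o^{-1}x\rangle]$; a straightforward induction on $N$ then shows that $(X_1,\ldots,X_N)$ possesses the everywhere-positive density
\begin{equation*}
  p_N^+(x_1,\ldots,x_N)=p_1(x_1)\prod_{n=2}^N(2\pi)^{-d/2}\exp\Big[-\tfrac{1}{2}\|x_n-Sx_{n-1}\|^2\Big]
\end{equation*}
on $\Rl^{dN}$. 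Since $\mu_N^-$ is the image of $\mu_N^+$ under the coordinate reversal $(x_1,\ldots,x_N)\mapsto(x_N,\ldots,x_1)$, its density is $p_N^-(x_1,\ldots,x_N)=p_N^+(x_N,\ldots,x_1)$, which is again everywhere positive; hence $\mu_N^+$ and $\mu_N^-$ are mutually absolutely continuous and $Ne_N=\ln\big[p_N^+(X_1,\ldots,X_N)/p_N^+(X_N,\ldots,X_1)\big]$ almost surely.

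The next step is a change of index in the product defining $p_N^+(x_N,\ldots,x_1)$: substituting $z_n=x_{N+1-n}$ turns $\prod_{n=2}^N\exp[-\frac{1}{2}\|z_n-Sz_{n-1}\|^2]$ into $\prod_{n=2}^N\exp[-\frac{1}{2}\|x_{n-1}-Sx_n\|^2]$ and $\langle z_1,\Sigma_o^{-1}z_1\rangle$ into $\langle x_N,\Sigma_o^{-1}x_N\rangle$. Taking logarithms, the normalization constants $(2\pi)^{-dN/2}(\det\Sigma_o)^{-1/2}$ cancel, leaving
\begin{equation*}
  Ne_N=\tfrac{1}{2}\langle X_N,\Sigma_o^{-1}X_N\rangle-\tfrac{1}{2}\langle X_1,\Sigma_o^{-1}X_1\rangle+\tfrac{1}{2}\sum_{n=2}^N\Big(\|X_{n-1}-SX_n\|^2-\|X_n-SX_{n-1}\|^2\Big).
\end{equation*}

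Finally I would expand the squared norms via $\|u-Sv\|^2=\langle u,u\rangle-2\langle u,Sv\rangle+\langle v,S^\top Sv\rangle$ and use the identity $\langle x_{n-1},Sx_n\rangle=\langle x_n,S^\top x_{n-1}\rangle$ to obtain $\|x_{n-1}-Sx_n\|^2-\|x_n-Sx_{n-1}\|^2=\langle x_{n-1},(I-S^\top S)x_{n-1}\rangle-\langle x_n,(I-S^\top S)x_n\rangle+2\langle x_n,(S-S^\top)x_{n-1}\rangle$. Summing over $n$, the first two terms telescope to $\langle X_1,(I-S^\top S)X_1\rangle-\langle X_N,(I-S^\top S)X_N\rangle$, and collecting this with the two boundary contributions $\pm\frac{1}{2}\langle\,\cdot\,,\Sigma_o^{-1}\,\cdot\,\rangle$ reproduces exactly the asserted quadratic form. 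The argument is entirely elementary; the only points deserving a sentence of justification are the existence and strict positivity of the joint density $p_N^+$ — which yield both the mutual absolute continuity of $\mu_N^+$ and $\mu_N^-$ and the finiteness of $e_N$ — and the careful bookkeeping in the reversal and in the telescoping cancellation.
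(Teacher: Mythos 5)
Your proposal is correct and follows essentially the same route as the paper's own proof: write the Lebesgue densities of $\mu_N^+$ and $\mu_N^-$ (the latter being the former with reversed arguments), note mutual absolute continuity, and take the logarithm of the ratio; your expansion of the squared norms and the telescoping cancellation is exactly the algebra the paper leaves implicit. No gaps.
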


Lemma \ref{lem:quad} shows that the entropy production $Ne_N$ of the
process $\X$ is a particular instance of a quasi-Toeplitz quadratic
functional $W_N$ having the form
\begin{equation}
\label{def:W}
W_N:=\frac{1}{2}\langle X_1,LX_1\rangle+\frac{1}{2}\sum_{n=2}^{N-1}\langle X_n,UX_n\rangle+\frac{1}{2}\langle X_N,RX_N\rangle+\sum_{n=2}^N\langle X_n,VX_{n-1}\rangle,
\end{equation}
$L$, $U$, $R$, and $V$ being four matrices in $\Rl^{d\times d}$ with
$L$, $U$, and $R$ symmetric. In fact, $W_N$ turns out to be $Ne_N$
when $L:=I-\Sigma_o^{-1}-S^\top S$, $U:=0$, $R:=\Sigma_o^{-1}+S^\top
S-I$, and $V:=S-S^\top$.  The circumstance to be stressed is that the
problem of the entropy production rate leads to perturb a perfect
Toeplitz structure by quadratic boundary terms, in such a way that the
coefficient matrix of $W_N$ differs from a block tridiagonal Toeplitz
matrix by the first and last diagonal blocks. This circumstance
required new large deviation principles for quadratic forms of
Gauss-Markov chains to be developed, and to be added to the literature
on large deviations for Gaussian processes.  Similarly to $e_N$, we
say that $W_N/N$ satisfies a \textit{large deviation principle} with
the \textit{rate function} $I$ if there exists a function $I$ with
compact level sets such that for each Borel set
$\mathcal{B}\subseteq\Rl$
\begin{equation*}
  -\inf_{w\in \mathcal{B}^o}\{I(w)\}\le\liminf_{N\uparrow\infty}\frac{1}{N}\ln\prob\bigg[\frac{W_N}{N}\in \mathcal{B}\bigg]
  \le \limsup_{N\uparrow\infty}\frac{1}{N}\ln\prob\bigg[\frac{W_N}{N}\in \mathcal{B}\bigg]\le -\inf_{w\in \bar{\mathcal{B}}}\{I(w)\}.
\end{equation*}
Large deviation principles for Gaussian processes have been an active
field of research since the pioneering works by Donsker and Varadhan
\cite{Donsker1985} and Bryc and Dembo \cite{Bryc1995} on the large
fluctuations of empirical measures for stationary Gaussian processes.
The focus soon moved to large deviations of quadratic functionals
\cite{Benitz1990,Dembo_Sol}, which in general cannot be tackled by a
direct application of the G\"artner-Ellis theorem since steepness of
the asymptotic cumulant generating function is not guaranteed. The
asymptotic cumulant generating function does not contain apparently
the whole information on the large deviation property of the process:
there is a loss of information passing to the limit.  For Toeplitz
quadratic forms of stationary centered Gaussian sequences, large
deviation principles are now well-established
\cite{Bercu1997,Gamboa1999,Ihara2000}, as well as some moderate
deviation principle \cite{Kakizawa2007}. These results have been
obtained by a sharp study of the spectrum of a product of two Toeplitz
matrices. For stationary centered Gaussian sequences, large deviations
have been also characterized for special Hermitian quadratic forms
\cite{Bercu1997,Bercu2000} and a sample path large deviation principle
has been deduced for the squares of the process \cite{Zani2013}.
Despite this progress, there are no general results to deal with
non-stationarity Gaussian sequences and perturbations of Toeplitz
quadratic functionals, which pose very specific problems.

The typical value of $W_N/N$ in the large $N$ limit is described by
the law of large numbers
\begin{equation*}
\lim_{N\uparrow\infty}\frac{W_N}{N}=\frac{1}{2}\mbox{tr}\Big[\big(U+V^\top S+S^\top V\big)\Sigma_s\Big]~~~~~\prob\mbox{-a.s.},
\end{equation*}
which easily follows by specializing to our model standard results of
the theory of Markov chains \cite{Meyn1}. It does not depend on the
initial condition and on the boundary terms.  On the contrary, we
shall see that non-stationarity and quadratic boundary terms, which
are intimately related for Gauss-Markov chains, affect deviations of
quadratic functionals from the mean and shape rate functions. This is
not surprising since squares of Gaussian random variables have an
exponential, rather than super-exponential, tail probability.
Researchers have already come across this issue.  In fact, the
maximum likelihood estimator and the Yule-Walker estimator for the
drift parameter of a one-dimensional autoregressive stable process
satisfy large deviation principles with different rate functions
\cite{Bercu1997}. These two estimators are connected to quadratic
functionals that differ exactly by a quadratic boundary term.  A
similar phenomenology holds for the continuous-time counterpart,
i.e.\ the Ornstein-Uhlenbeck process \cite{FlorensLandais1999}. Coming
more close to statistical physics, perturbations by quadratic boundary
terms of the entropy production rate for Ornstein-Uhlenbeck processes
with normal drift matrix have been considered to account for the heat
dissipation rate of a network of thermally driven harmonic oscillators
\cite{Jaksic2}.

The remainder of the paper is organized as follows. In Section
\ref{sec:mainres} we present the main results of this work: a large
deviation principle for quadratic functionals of type (\ref{def:W}) in
the context of multivariate centered stable Gauss-Markov chains and
the Gallavotti-Cohen symmetry of the entropy production rate
function. In Section \ref{sec:normal} we apply the theory to the very
special class of stable Gauss-Markov chains with normal drift matrix,
making contact with previous results. Section \ref{sec:proofLDP}
provides the proof of the large deviation principle for the quadratic
functionals. Section \ref{sec:simmetria} reports the proof of the
Gallavotti-Cohen symmetry of the entropy production rate function.

\subsection{Main results}
\label{sec:mainres}

From now on we regard $\Rl^d$ as a subset of $\Cm^d$ and we denote by
$\langle\cdot,\cdot\rangle$ the standard inner product of $\Cm^d$. We
write $A\succ 0$ to specify a positive-definite Hermitian matrix
$A\in\Cm^{d\times d}$.

Fix matrices $S$, $\Sigma_o$, $L$, $U$, $R$, and $V$ in $\Rl^{d\times
  d}$ with $\rho(S)<1$, $\Sigma_o\succ 0$, and $L$, $U$, and $R$
symmetric. According to (\ref{evol_eq}) and (\ref{def:W}), they define
a centered stable Gauss-Markov chain $\X$ and a quadratic functional
$W_N$ for each $N\ge 1$. For every $\lambda\in\Rl$ and
$\theta\in[0,2\pi]$, we make use of $S$, $U$, and $V$ to construct the
Hermitian matrix
\begin{equation}
F_\lambda(\theta):=\big(I-S^\top e^{\mathrm{i}\theta}\big)\big(I-Se^{-\mathrm{i}\theta}\big)-\lambda\big(U+V e^{-\mathrm{i}\theta}+V^\top e^{\mathrm{i}\theta}\big)\in\Cm^{d\times d},
\label{def:symbol}
\end{equation}
and we set
\begin{equation}
  f_\lambda:=\infp_{\theta\in[0,2\pi]}\inf_{\substack{z\in\Cm^d\\z\ne 0}}\bigg\{\frac{\langle z,F_\lambda(\theta)z\rangle}
  {\langle z,z\rangle}\bigg\}.
\label{def:infr}
\end{equation}
As $f_\lambda$ bounds the spectrum of $F_\lambda(\theta)$ from below
for all $\theta$, if $f_\lambda>0$, then the functions that map
$\theta\in[0,2\pi]$ in $\ln\det F_\lambda(\theta)$ and
$F^{-1}_\lambda(\theta)$ are well-defined and continuous. Thus, for
each $\lambda\in\Rl$ such that $f_\lambda>0$ we can introduce the
integrals
\begin{equation}
\varphi(\lambda):=-\frac{1}{4\pi}\int_0^{2\pi} \ln\det F_\lambda(\theta) \,d\theta
\label{def:phi}
\end{equation}
and
\begin{equation}
  \Phi_\lambda(n):=\frac{1}{2\pi}\int_0^{2\pi} F_\lambda^{-1}(\theta)e^{-\mathrm{i}n\theta}d\theta
\label{def:Fourierc}
\end{equation}
with $n\in\mathbb{Z}$. It will turn out that $\varphi(\lambda)$ is the
value at $\lambda$ of the cumulant generating function of $W_N$ in the
large $N$ limit: $\lim_{N\uparrow\infty}(1/N)\ln\Ex[e^{\lambda
    W_N}]=\varphi(\lambda)$. The boundary terms described by the
matrices $\Sigma_o$, $L$, and $R$ determine the effective domain of
the asymptotic cumulant generating function, and in order to specify
this domain we need the following technical lemma. The proof is
reported in Section \ref{sec:proofLDP}.
\begin{lemma}
\label{lem:tech}
Let $\lambda\in\Rl$ be such that $f_\lambda>0$. The following
conclusions hold:
\begin{enumerate}
\item $H_\lambda:=I+(S+\lambda V)\Phi_\lambda(1)\in\Cm^{d\times d}$ is
  invertible, and the matrix
  \begin{equation*}
\mathcal{L}_\lambda:=\displaystyle{\Sigma_o^{-1}+S^\top S-\lambda L-(S^\top+\lambda V^\top)\Phi_\lambda(0)H_\lambda^{-1}(S+\lambda V)}\in\Cm^{d\times d}
\end{equation*}
is Hermitian;
\item $K_\lambda:=I+\Phi_\lambda(1)(S+\lambda V)\in\Cm^{d\times d}$
  is invertible, and the matrix
  \begin{equation*}
\mathcal{R}_\lambda:=\displaystyle{I-\lambda R-(S+\lambda V)K_\lambda^{-1}\Phi_\lambda(0)(S^\top+\lambda V^\top)}\in\Cm^{d\times d}
\end{equation*}
is Hermitian.
\end{enumerate}
\end{lemma}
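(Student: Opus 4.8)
The plan is to prove both parts by exhibiting the matrices $H_\lambda$, $K_\lambda$, $\mathcal{L}_\lambda$, and $\mathcal{R}_\lambda$ as Schur complements arising from a block factorization of the coefficient matrix of the quadratic form $\lambda W_N$ relative to the inverse covariance of $(X_1,\dots,X_N)$, and then to pass to the large-$N$ limit where the relevant Toeplitz blocks are governed by the symbol $F_\lambda(\theta)$. Concretely, the quadratic functional $W_N$ in \eqref{def:W} together with the Gaussian density of $\X$ produces, inside $\Ex[e^{\lambda W_N}]$, a quadratic form with a banded (block-tridiagonal) "bulk" whose generating symbol is precisely $F_\lambda(\theta)$ in \eqref{def:symbol}, perturbed only in the first and last diagonal blocks by the boundary data $\Sigma_o^{-1}$, $L$, $R$. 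Eliminating the bulk variables $X_2,\dots,X_{N-1}$ (resp.\ $X_1,\dots,X_{N-1}$) by completing the square should leave an effective quadratic form in $X_1$ (resp.\ $X_N$) whose matrix, in the $N\to\infty$ limit, is exactly $\mathcal{L}_\lambda$ (resp.\ $\mathcal{R}_\lambda$); the intermediate inverse-of-the-banded-operator contributions collapse to the Fourier coefficients $\Phi_\lambda(0)$ and $\Phi_\lambda(1)$ defined in \eqref{def:Fourierc}, which is why those two coefficients, and the matrices $H_\lambda = I+(S+\lambda V)\Phi_\lambda(1)$ and $K_\lambda = I+\Phi_\lambda(1)(S+\lambda V)$, appear.

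First I would establish the invertibility of $H_\lambda$ and $K_\lambda$. Since $f_\lambda>0$, the operator with symbol $F_\lambda(\theta)$ is boundedly invertible, and $\Phi_\lambda(n)$ are the Fourier coefficients of $F_\lambda^{-1}(\theta)$. I would use the resolvent-type identity coming from $F_\lambda(\theta) = (I-S^\top e^{\mathrm{i}\theta})(I-Se^{-\mathrm{i}\theta}) - \lambda(U+Ve^{-\mathrm{i}\theta}+V^\top e^{\mathrm{i}\theta})$ to derive an algebraic relation for the $\Phi_\lambda(n)$: multiplying $F_\lambda(\theta)\Phi_\lambda(\theta)=I$ (where $\Phi_\lambda(\theta):=F_\lambda^{-1}(\theta)$) and reading off Fourier coefficients yields finite linear recursions linking $\Phi_\lambda(0)$, $\Phi_\lambda(1)$, $\Phi_\lambda(-1)$, etc. From these, $H_\lambda$ and $K_\lambda$ can be recognized as the "connection coefficients" that make a certain homogeneous difference equation solvable, and their invertibility follows from $f_\lambda>0$ (equivalently, from the fact that $\det F_\lambda(\theta)$ has no zeros, so the associated $2d\times 2d$ companion/symplectic pencil has no eigenvalues on the unit circle and splits into stable/unstable subspaces). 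The identities $H_\lambda \Phi_\lambda(1) = \Phi_\lambda(1) K_\lambda$ and similar will be the computational workhorses.

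Next I would verify the Hermiticity of $\mathcal{L}_\lambda$ and $\mathcal{R}_\lambda$. Note $\Sigma_o^{-1}+S^\top S - \lambda L$ is already real symmetric (as $L$ is symmetric), and $I-\lambda R$ likewise; so the content is that the correction terms $(S^\top+\lambda V^\top)\Phi_\lambda(0)H_\lambda^{-1}(S+\lambda V)$ and $(S+\lambda V)K_\lambda^{-1}\Phi_\lambda(0)(S^\top+\lambda V^\top)$ are Hermitian. Writing $M := S+\lambda V$, this amounts to showing $M^\top \Phi_\lambda(0) H_\lambda^{-1} M = (M^\top \Phi_\lambda(0) H_\lambda^{-1} M)^*$, i.e.\ (using that $\Phi_\lambda(0)=\Phi_\lambda(0)^*$ and $\Phi_\lambda(-1)=\Phi_\lambda(1)^*$ since $F_\lambda(\theta)$ is Hermitian) that $\Phi_\lambda(0)H_\lambda^{-1} = (H_\lambda^{-1})^* \Phi_\lambda(0) = (H_\lambda^*)^{-1}\Phi_\lambda(0)$, equivalently $H_\lambda^* \Phi_\lambda(0) = \Phi_\lambda(0) H_\lambda$. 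This last identity, $\bigl(I + \Phi_\lambda(1)^* M\bigr)\Phi_\lambda(0) = \Phi_\lambda(0)\bigl(I + M\Phi_\lambda(1)\bigr)$, i.e.\ $M^*\,\Phi_\lambda(-1)\,\Phi_\lambda(0) = \Phi_\lambda(0)\,\Phi_\lambda(1)\, M$ wait—more precisely $\Phi_\lambda(1)^* M \Phi_\lambda(0) = \Phi_\lambda(0) M \Phi_\lambda(1)$—should come out of the Fourier-coefficient recursions for $F_\lambda^{-1}$ together with the structural fact that $F_\lambda(\theta)^* = F_\lambda(-\theta)$, hence $\Phi_\lambda(n)^* = \Phi_\lambda(-n)$. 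The analogous manipulation with $K_\lambda$ handles $\mathcal{R}_\lambda$, and in fact a symmetry $\theta\mapsto-\theta$ / transpose argument relates the two parts.

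The main obstacle I anticipate is precisely extracting and bookkeeping these Fourier-coefficient identities for $\Phi_\lambda(n)=(1/2\pi)\int_0^{2\pi}F_\lambda^{-1}(\theta)e^{-\mathrm{i}n\theta}d\theta$: because $F_\lambda(\theta)$ is a matrix Laurent polynomial of degree one, $F_\lambda^{-1}$ is a genuine (not finitely supported) matrix function, and one must carefully justify the recursions $\sum_k [F_\lambda]_k \Phi_\lambda(n-k) = \delta_{n,0} I$ and solve them far enough to isolate $\Phi_\lambda(0)$ and $\Phi_\lambda(1)$ in terms of the "stable" root structure of $\det F_\lambda$. Equivalently, one may phrase the whole argument via the Wiener–Hopf / spectral factorization $F_\lambda(\theta) = \Psi_\lambda(e^{\mathrm{i}\theta})^* \Psi_\lambda(e^{\mathrm{i}\theta})$ with $\Psi_\lambda$ outer, in which case $H_\lambda$ and $K_\lambda$ become triangular pieces of $\Psi_\lambda$ and the Hermiticity is transparent; the price is proving existence of the factorization from $f_\lambda>0$. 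I would present whichever route keeps the algebra shortest, most likely the direct Fourier-recursion computation, relegating the routine identity-chasing to displayed equations and flagging only the nondegeneracy input ($f_\lambda>0 \Rightarrow \det F_\lambda \neq 0$ on the circle) as the substantive hypothesis being used.
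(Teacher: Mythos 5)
Your structural framing (boundary Schur complement of the block-tridiagonal bulk, passage to $N\uparrow\infty$ through the Fourier coefficients of $F_\lambda^{-1}$) is exactly the paper's: there the lemma is obtained from Lemma \ref{lem:terzo} applied to the HQT sequence $\mathsf{Q}_N=\Sigma_N^{-1}-\lambda\mathsf{M}_N$, with $E=-(S+\lambda V)$. The genuine gap is the invertibility of $H_\lambda$ and $K_\lambda$, which in your proposal is asserted rather than proved. Your parenthetical ``equivalently, $\det F_\lambda(\theta)$ has no zeros'' is not an equivalence: $f_\lambda>0$ is uniform positive definiteness of $F_\lambda(\theta)$, strictly stronger than nonvanishing of the determinant, and it is the positivity that is actually used. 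Moreover, the companion/symplectic-pencil argument you gesture at is problematic here because $E=-(S+\lambda V)$ may be singular (so the linearization of $zF_\lambda(z)$ degenerates), and even granted a stable/unstable splitting you give no argument connecting it to invertibility of $I+(S+\lambda V)\Phi_\lambda(1)$. The paper's proof of this point (Appendix \ref{proof:terzo}) is not routine: it argues by contradiction with the test vectors $z_n:=\Phi_\lambda(1-n)u$, the quadratic-form identity (\ref{TFrel}) together with $f_\lambda>0$, and the Riemann--Lebesgue lemma. Your alternative Wiener--Hopf route does work --- with a matrix Fej\'er--Riesz factorization $F_\lambda=\Psi^\dagger\Psi$, $\Psi(z)=A_0+A_1z$ outer, one gets $H_\lambda=A_0^\dagger A_0\,\Phi_\lambda(0)$, hence invertibility and Hermiticity of $\Phi_\lambda(0)H_\lambda^{-1}$ at once --- but you would have to prove or cite the factorization and carry out that identification, neither of which is in the proposal; as written, the hard half of the lemma is missing.

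The Hermiticity half of your plan is sound and can be completed along the lines you sketch: it reduces to $H_\lambda^\dagger\Phi_\lambda(0)=\Phi_\lambda(0)H_\lambda$, i.e.\ $\Phi_\lambda(-1)(S^\top+\lambda V^\top)\Phi_\lambda(0)=\Phi_\lambda(0)(S+\lambda V)\Phi_\lambda(1)$ (note the transpose on the left factor, which your displayed identity drops --- it matters since $V=S-S^\top$ is not symmetric), and this identity does follow from the two-sided recursions $\sum_k[F_\lambda]_k\Phi_\lambda(n-k)=\delta_{n,0}I$ by a telescoping (discrete Wronskian) argument once you add the decay $\Phi_\lambda(\pm n)\to 0$ from Riemann--Lebesgue. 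This is a genuinely different, and arguably slicker, derivation of Hermiticity than the paper's, which instead inherits it from the manifestly Hermitian finite-$N$ boundary matrices $\mathsf{S}_N$ by solving the corner-block systems (\ref{eqTC})--(\ref{eqTR1}) and letting $N\uparrow\infty$; but it does not repair the missing invertibility argument, on which both $\mathcal{L}_\lambda$ and $\mathcal{R}_\lambda$ being well-defined depends.
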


Lemma \ref{lem:tech} states that the matrices $\mathcal{L}_\lambda$
and $\mathcal{R}_\lambda$ are well-defined and Hermitian when
$\lambda\in\Rl$ satisfies $f_\lambda>0$. It makes then sense to
consider the extended real numbers
\begin{equation}
  \lambda_-:=\inf\Big\{\lambda\in\Rl~:~f_\lambda>0,~\mathcal{L}_\lambda\succ 0, \mbox{ and }\mathcal{R}_\lambda\succ 0\Big\}
\label{def:lambdam}
\end{equation}
and
\begin{equation}
  \lambda_+:=\sup\Big\{\lambda\in\Rl~:~f_\lambda>0,~\mathcal{L}_\lambda\succ 0, \mbox{ and }\mathcal{R}_\lambda\succ 0\Big\}.
\label{def:lambdap}
\end{equation}
We are now in the position to present the first main result of the
paper, which establishes a large deviation principle for $W_N/N$ and
is proved in Section \ref{sec:proofLDP} via a time-dependent change of
measure.
\begin{theorem}
  \label{main}
  The following conclusions hold:
  \begin{enumerate}
  \item $\lambda_-<0<\lambda_+$ and the convex function $I$ that maps
    $w\in\Rl$ in
    $I(w):=\sup_{\lambda\in(\lambda_-,\lambda_+)}\{w\lambda-\varphi(\lambda)\}$
    has compact level sets;
 \item the quadratic functional $W_N/N$ associated with the stable
   Gauss-Markov chain $\X$ satisfies a large deviation principle with
   the rate function $I$.
\end{enumerate}
\end{theorem}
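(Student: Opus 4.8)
The plan is to establish the large deviation principle by the Gärtner--Ellis route, but carried out via an explicit time-dependent change of measure rather than by verifying steepness of a naively-defined cumulant generating function. The first step is to compute, for each fixed $\lambda$ with $f_\lambda>0$, $\mathcal{L}_\lambda\succ 0$, and $\mathcal{R}_\lambda\succ 0$, the moment generating function $\Ex[e^{\lambda W_N}]$ exactly. Since $(X_1,\dots,X_N)$ is jointly Gaussian with a block-tridiagonal precision matrix and $W_N$ is a quadratic form with a block-tridiagonal Toeplitz coefficient matrix perturbed in the first and last diagonal blocks, the integral $\Ex[e^{\lambda W_N}]=\big(\det(\text{something})\big)^{-1/2}$ where the ``something'' is itself block-tridiagonal and quasi-Toeplitz, with symbol $F_\lambda(\theta)$ in the bulk and corrections involving $\Sigma_o^{-1}$, $L$, $R$. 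I would then analyze the asymptotics of this determinant: factor it as the product of the ``bulk'' Toeplitz determinant (whose logarithmic growth rate is $-2\varphi(\lambda)$ by the strong Szeg\H{o}-type limit theorem, using $f_\lambda>0$ to guarantee $\ln\det F_\lambda(\theta)$ is a well-defined continuous function) times boundary factors that converge to finite nonzero limits precisely governed by $\mathcal{L}_\lambda$ and $\mathcal{R}_\lambda$. The role of the conditions $\mathcal{L}_\lambda\succ 0$ and $\mathcal{R}_\lambda\succ 0$ is exactly to keep those boundary determinants bounded away from $0$ and $\infty$, so that
\begin{equation*}
\lim_{N\uparrow\infty}\frac{1}{N}\ln\Ex\big[e^{\lambda W_N}\big]=\varphi(\lambda)
\end{equation*}
for every $\lambda\in(\lambda_-,\lambda_+)$, while outside that interval the limit is $+\infty$. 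A cleaner way to organize this is the change-of-measure argument alluded to in the statement: define a new (generally time-inhomogeneous) Gaussian Markov chain whose law $\widetilde{\prob}_N$ has Radon--Nikodym derivative proportional to $e^{\lambda W_N}$ with respect to $\prob$; the exponential tilt by a quasi-Toeplitz quadratic form of a Gauss--Markov chain is again a Gauss--Markov chain with a modified, asymptotically time-homogeneous transition, and the normalizing constant is exactly $\Ex[e^{\lambda W_N}]$. Propagating the recursion for the tilted covariances and reading off when it stays positive definite and convergent gives the same thresholds $\lambda_\pm$.

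Next I would verify the structural facts in part (1). That $\lambda_-<0<\lambda_+$ follows because at $\lambda=0$ one has $F_0(\theta)=(I-S^\top e^{\mathrm i\theta})(I-Se^{-\mathrm i\theta})\succ 0$ for all $\theta$ (as $\rho(S)<1$), hence $f_0>0$, while $\Phi_0(1)=0$ (the Fourier coefficients of $(I-S^\top e^{\mathrm i\theta})^{-1}(I-Se^{-\mathrm i\theta})^{-1}$ with nonnegative index expansions make the positive-index coefficient vanish), so $H_0=K_0=I$, $\mathcal{L}_0=\Sigma_o^{-1}+S^\top S\succ 0$, $\mathcal{R}_0=I\succ 0$; openness of the conditions $f_\lambda>0$, $\mathcal{L}_\lambda\succ 0$, $\mathcal{R}_\lambda\succ 0$ in $\lambda$ (continuity of all the ingredients, via Lemma~\ref{lem:tech}) then forces the defining set of $\lambda_\pm$ to contain a neighborhood of $0$. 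Convexity of $\varphi$ on $(\lambda_-,\lambda_+)$ follows from H\"older's inequality applied to $\Ex[e^{\lambda W_N}]$ and passage to the limit; hence $I$, as a Legendre--Fenchel transform restricted to that open interval, is convex and lower semicontinuous. Compactness of level sets (i.e.\ $I$ is a good rate function with $I(w)\to\infty$ as $|w|\to\infty$) comes from the fact that $\varphi$ is finite on an open interval around $0$, which forces superlinear growth of $I$ at $\pm\infty$ on the relevant side — standard once one knows the domain of $\varphi$ is open and contains $0$.

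Finally, for part (2) I would apply the Gärtner--Ellis theorem in its usual form: we have shown $\varphi(\lambda):=\lim_N (1/N)\ln\Ex[e^{\lambda W_N}]$ exists in $(-\infty,+\infty]$ for all $\lambda\in\Rl$, equals the finite value \eqref{def:phi} on the open interval $(\lambda_-,\lambda_+)\ni 0$, and is $+\infty$ off its closure. The subtlety — and the main obstacle — is that $\varphi$ need not be steep at the endpoints $\lambda_\pm$ (this is precisely the ``loss of information'' phenomenon for quadratic functionals noted in the introduction), so the naive Gärtner--Ellis conclusion only yields the large deviation upper bound with rate function $I$ and the lower bound for $w$ in the range of $\varphi'$ on $(\lambda_-,\lambda_+)$. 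To close the lower bound for all $w$ I would use the change-of-measure construction directly: tilting by $\lambda$ close to $\lambda_+$ shifts the mean of $W_N/N$ up to $\varphi'(\lambda)$, and a careful second-order (Gaussian concentration) estimate under the tilted measure, combined with letting $\lambda\uparrow\lambda_+$, produces matching lower bounds at the boundary of the range — this is the technically heaviest part, requiring uniform control of the tilted covariances as $\lambda\to\lambda_\pm$. Exponential tightness, needed to upgrade from a weak to a full large deviation principle, follows from the exponential moment bound at any fixed $\lambda\neq 0$ in $(\lambda_-,\lambda_+)$ together with a Chebyshev estimate. Assembling the upper bound from Gärtner--Ellis, the lower bound from the tilting argument, and exponential tightness gives the full large deviation principle for $W_N/N$ with rate function $I$.
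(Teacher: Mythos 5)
Your outline of the cumulant-generating-function computation (Gaussian integral, Szeg\H{o}-type bulk asymptotics, boundary determinants controlled by $\mathcal{L}_\lambda$ and $\mathcal{R}_\lambda$) and of the upper bound via exponential tilting and Chernoff/exponential tightness matches the paper's route. The genuine gap is in the lower bound for $w$ outside the range of $\varphi'$, which is exactly the non-steep regime the theorem is designed to handle. Your plan---tilt by a fixed $\lambda$ close to $\lambda_+$, use a ``Gaussian concentration'' estimate under the tilted measure, then let $\lambda\uparrow\lambda_+$---cannot produce the affine part $I(w)=w\lambda_+-\varphi_+$ for $w>d_+$: under any fixed admissible tilt the mean of $W_N/N$ is $\varphi'(\lambda)\le d_+<w$, so the event $\{W_N/N\approx w\}$ remains a large-deviation event for the tilted chain and concentration gives nothing (the problem simply recurses). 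What is needed is an $N$-dependent tilt $\eta_N\uparrow\lambda_+$ chosen so that the tilted mean equals $w$ exactly (the paper's equation (\ref{eqetaN}), solvable because the map $\lambda\mapsto\frac{1}{2(N+2)}\sum_l m_l(1-\lambda m_l)^{-1}$ blows up at $\lambda=\xi_N$), and then the key point is that under $\prob_{\eta_N,N+2}$ the functional does \emph{not} concentrate: one eigenvalue weight $m_l/(1-\eta_N m_l)$ becomes macroscopic, the fluctuations of $W_{N+2}/(N+2)$ around $w$ stay of order one, and one needs an \emph{anti-concentration} bound---a uniform positive lower bound $p>0$ on the probability of landing within $\epsilon$ of $w$---not an upper bound on fluctuations. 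The paper gets this from the Bryc--Dembo lemma (Lemma \ref{lem:Dembo}) applied to the representation of the tilted $W_{N+2}$ as a weighted sum of centered squares of i.i.d.\ standard Gaussians with $\ell^1$-bounded weights, together with the lower estimate (\ref{eq:liminfeta}) on $\ln\Ex[e^{\eta_N W_{N+2}}]$. Without this ingredient your argument only yields the lower bound on $\overline{(d_-,d_+)}$.

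Two smaller points. First, your definition of $I$ takes the supremum over all of $(\lambda_-,\lambda_+)$, so you must know that the limit $\lim_N(1/N)\ln\Ex[e^{\lambda W_N}]=\varphi(\lambda)$ holds for \emph{every} $\lambda$ in that interval; this requires proving that the set $\Lambda=\{\lambda:f_\lambda>0,\ \mathcal{L}_\lambda\succ 0,\ \mathcal{R}_\lambda\succ 0\}$ is an interval, which is not obvious and is the content of the paper's Lemma \ref{lem:lambdaset} (via the Rayleigh-quotient limits $\xi_\pm$ and the uniform bound $r(\Sigma_N^{-1})\ge\sigma$); your sketch assumes it tacitly. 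Second, your verification that $0\in\Lambda$ contains an incorrect computation: $\Phi_0(1)\ne 0$ in general; indeed $F_0^{-1}(\theta)=\sum_{j,k\ge 0}S^j(S^\top)^k e^{\mathrm{i}(k-j)\theta}$ gives $\Phi_0(1)=\Sigma_s S^\top$, hence $H_0=I+S\Sigma_s S^\top=\Sigma_s$ and $\mathcal{L}_0=\Sigma_o^{-1}\succ 0$, not $\Sigma_o^{-1}+S^\top S$, and $K_0\ne I$, $\mathcal{R}_0\ne I$. The conclusion $\lambda_-<0<\lambda_+$ survives (either by this corrected computation plus continuity, or, as in the paper, directly from Lemma \ref{lem:lambdaset} since $\lambda=0$ trivially satisfies $\lambda\xi_\pm<1$), but as written the step is wrong.
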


Theorem \ref{main} outperforms the G\"artner-Ellis theorem, which
requires that the asymptotic cumulant generating function exists and
defines an essentially smooth, lower semicontinuous function
\cite{Frank1,Dembo}. In Section \ref{sec:proofLDP} we shall prove that
$\lim_{N\uparrow\infty}(1/N)\ln\Ex[e^{\lambda W_N}]=\varphi(\lambda)$
if $\lambda\in(\lambda_-,\lambda_+)$ and
$\lim_{N\uparrow\infty}(1/N)\ln\Ex[e^{\lambda W_N}]=+\infty$ if
$\lambda\notin\overline{(\lambda_-,\lambda_+)}$.  We shall also verify
that the function $\varphi$ that maps
$\lambda\in(\lambda_-,\lambda_+)$ in $\varphi(\lambda)$ is convex and
differentiable, so that the limits
$\lim_{\lambda\downarrow\lambda_-}\varphi(\lambda)=:\varphi_-$,
$\lim_{\lambda\uparrow\lambda_+}\varphi(\lambda)=:\varphi_+$,
$\lim_{\lambda\downarrow\lambda_-}\varphi'(\lambda)=:d_-$, and
$\lim_{\lambda\uparrow\lambda_+}\varphi'(\lambda)=:d_+$ exist.  If
even the limit $\lim_{N\uparrow\infty}(1/N)\ln\Ex[e^{\lambda W_N}]$
existed for all $\lambda\in\Rl$ and defined a lower semicontinuous
function as demanded by the G\"artner-Ellis theorem, what is generally
missing to guarantee essentially smoothness of the asymptotic cumulant
generating function is the steepness of $\varphi$, i.e.\ the property
that $d_-=-\infty$ if $\lambda_->-\infty$ and $d_+=+\infty$ if
$\lambda_+<+\infty$. The lack of steepness produces affine stretches
in the graph of the rate function. In fact, if $\lambda_->-\infty$ and
$d_->-\infty$, then $I(w)=w\lambda_--\varphi_-$ for all $w<
d_-$. Notice that $\varphi_-$ is finite in this case since
$\varphi(\lambda)\le\varphi(0)+\varphi'(\lambda)\lambda=\varphi'(\lambda)\lambda$
for all $\lambda\in(\lambda_-,\lambda_+)$ by convexity, which gives
$\varphi_-\le d_-\lambda_-$ by sending $\lambda$ to $\lambda_-$.
Similarly, $I(w)=w\lambda_+-\varphi_+$ for all $w>d_+$ with
$\varphi_+$ finite if $\lambda_+<+\infty$ and $d_+<+\infty$.  The
following example involving a quadratic functional of a
one-dimensional stable Gauss-Markov chain demonstrates the presence of
affine stretches.
\begin{example}
  Fix $s\in\Rl$ such that $|s|<1$ and consider the one-dimensional
  autoregressive model $X_{n+1}=sX_n+G_n$ for $n\ge 1$. The large
  fluctuations of the quadratic functional $W_N:=\sum_{n=1}^N X_n^2$
  have been already characterized for the non-stationary case $X_1:=0$
  \cite{Bryc1993} and for the stationary centered case corresponding
  to $\Sigma_o:=(1-s^2)^{-1}$ \cite{Dembo_Sol}. We can use our theory
  to investigate centered non-stationary situations with general
  initial variance $\Sigma_o>0$. In this example $S:=s$, $L:=2$,
  $U:=2$, $R:=2$, and $V:=0$. For all $\lambda$ and $\theta$ we find
\begin{equation*}
F_\lambda(\theta)=1+s^2-2\lambda-2s\cos(\theta),
\end{equation*}
so that $f_\lambda=1+s^2-2\lambda-2|s|$. If $f_\lambda>0$,
i.e.\ $2\lambda<(1-|s|)^2$, then easy calculations yield
\begin{equation*}
\varphi(\lambda)=-\frac{1}{2}\ln\frac{1+s^2-2\lambda+\sqrt{(1+s^2-2\lambda)^2-4s^2}}{2},
\end{equation*}
\begin{equation*}
\mathcal{L}_\lambda=\Sigma_o^{-1}+\frac{s^2-1-2\lambda+\sqrt{(1+s^2-2\lambda)^2-4s^2}}{2},
\end{equation*}
and
\begin{equation*}
\mathcal{R}_\lambda=\frac{1-s^2-2\lambda+\sqrt{(1+s^2-2\lambda)^2-4s^2}}{2}>0.
\end{equation*}
The quantities $\mathcal{L}_\lambda$ and $\mathcal{R}_\lambda$ are
defined by Lemma \ref{lem:tech}.  According to (\ref{def:lambdam}) and
(\ref{def:lambdap}), we have $\lambda_-=-\infty$ and
$2\lambda_+=(1-|s|)^2$ if $\Sigma_o^{-1}\ge 1-|s|$, and
$\lambda_-=-\infty$ and
$2\lambda_+=(\Sigma_o^{-1}-1+s^2)/(1-\Sigma_o)$ if
$\Sigma_o^{-1}<1-|s|$. In the former case $\varphi$ is steep, whereas
steepness is missing in the latter case where
$d_+=1/\sqrt{(1+s^2-2\lambda_+)^2-4s^2}<+\infty$.  If
$\Sigma_o^{-1}\ge 1-|s|$, then the rate function is
\begin{equation*}
  I(w)=J(w):=\begin{cases}
  +\infty & \mbox{if }w\le 0,\\
  \frac{1}{2}(1+s^2)w-\frac{1}{2}\ln(2w)-\frac{1}{2}\sqrt{1+(2sw)^2}+\frac{1}{2}\ln[1+\sqrt{1+(2sw)^2}] & \mbox{if } w>0.
  \end{cases}
\end{equation*}
If $\Sigma_o^{-1}<1-|s|$, then the rate function reads
\begin{equation*}
  I(w)=\begin{cases}
  J(w) & \mbox{if } w<d_+,\\
  w\lambda_+-\varphi_+  & \mbox{if } w\ge d_+.
  \end{cases}
\end{equation*}
\end{example}

As $W_N/N=e_N$ for all $N\ge 1$ when $L:=I-\Sigma_o^{-1}-S^\top S$,
$U:=0$, $R:=\Sigma_o^{-1}+S^\top S-I$, and $V:=S-S^\top$, Theorem
\ref{main} immediately shows that the entropy production rate $e_N$
satisfies a large deviation principle. The Hermitian matrix
$F_\lambda(\theta)$ corresponding to $e_N$ reads for each
$\lambda\in\Rl$ and $\theta\in[0,2\pi]$
\begin{align}
  \nonumber
  F_\lambda(\theta)&=\big(I-S^\top e^{\mathrm{i}\theta}\big)\big(I-S e^{-\mathrm{i}\theta}\big)+2\mathrm{i}\lambda\big(S-S^\top\big)\sin\theta\\
  &=I+S^\top S-(S+S^\top)\cos\theta+\mathrm{i}(2\lambda+1)\big(S-S^\top\big)\sin\theta.
  \label{F_e}
\end{align}
The second main result of the paper, whose proof is reported in
Section \ref{sec:simmetria}, confirms the Gallavotti-Cohen
symmetry. This symmetry comes from the manifest relationship
$F_{-\lambda-1}(\theta)=F_\lambda(2\pi-\theta)$.
\begin{theorem}
\label{LDP_eN}
The following conclusions hold:
\begin{enumerate}
\item the entropy production rate $e_N$ of the stable Gauss-Markov
  chain $\X$ satisfies a large deviation principle with the convex
  rate function $I$;
\item $\lambda_-=-\lambda_+-1$ and $I(-w)=I(w)+w$ for all $w\in\Rl$.
\end{enumerate}
\end{theorem}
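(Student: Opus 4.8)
The plan is to read both statements off Theorem~\ref{main} applied to the boundary data $L=I-\Sigma_o^{-1}-S^\top S$, $U=0$, $R=\Sigma_o^{-1}+S^\top S-I$, $V=S-S^\top$, for which $W_N/N=e_N$ and the Hermitian symbol is the matrix $F_\lambda(\theta)$ of \eqref{F_e}. The hypotheses of Theorem~\ref{main} are satisfied since $\rho(S)<1$, $\Sigma_o\succ0$, and $L,U,R$ are symmetric (indeed $R=-L$). Hence part~1 is immediate: $\lambda_-<0<\lambda_+$ and $e_N$ obeys a large deviation principle with the convex rate function $I(w)=\sup_{\lambda\in(\lambda_-,\lambda_+)}\{w\lambda-\varphi(\lambda)\}$.

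For part~2 I would start from the chain of identities $F_{-\lambda-1}(\theta)=F_\lambda(2\pi-\theta)=F_\lambda(-\theta)=F_\lambda(\theta)^\top$, all transparent from \eqref{F_e}: replacing $\lambda$ by $-\lambda-1$, or $\theta$ by $2\pi-\theta$ or $-\theta$, or transposing all flip the sign of the only non-symmetric contribution, the one carrying $S-S^\top$. From $F_{-\lambda-1}(\theta)=F_\lambda(\theta)^\top$ I then obtain, by taking $\inf_\theta\inf_{z\ne0}$, that $f_{-\lambda-1}=f_\lambda$; by inverting and integrating against $e^{-\mathrm{i}n\theta}$, that $\Phi_{-\lambda-1}(n)=\Phi_\lambda(n)^\top$; and by taking $\ln\det$ and integrating (note $\det F_\lambda(\theta)^\top=\det F_\lambda(\theta)$), that $\varphi(-\lambda-1)=\varphi(\lambda)$ whenever $f_\lambda>0$. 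In particular the condition $f_\lambda>0$ under which Lemma~\ref{lem:tech} operates is invariant under $\lambda\mapsto-\lambda-1$.

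The crux is to prove that the set $D:=\{\lambda\in\Rl:f_\lambda>0,\ \mathcal{L}_\lambda\succ0,\ \mathcal{R}_\lambda\succ0\}$ defining $\lambda_\pm$ is invariant under $\lambda\mapsto-\lambda-1$. Because $V=S-S^\top$, the substitution $\lambda\mapsto-\lambda-1$ sends $S+\lambda V$ to its transpose $S^\top+\lambda V^\top$; combined with $\Phi_{-\lambda-1}(n)=\Phi_\lambda(n)^\top$ this gives $H_{-\lambda-1}=K_\lambda^\top$ and $K_{-\lambda-1}=H_\lambda^\top$, so all four matrices are invertible by Lemma~\ref{lem:tech} at the index $-\lambda-1$ (legitimate since $f_{-\lambda-1}=f_\lambda>0$). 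Feeding these relations into the formula for $\mathcal{L}_{-\lambda-1}$ and using $L=-R=I-\Sigma_o^{-1}-S^\top S$ to absorb the affine terms, a short transpose-bookkeeping computation yields the identity
\[
\mathcal{L}_{-\lambda-1}=\mathcal{R}_\lambda^\top ,
\]
and replacing $\lambda$ by $-\lambda-1$ gives $\mathcal{R}_{-\lambda-1}=\mathcal{L}_\lambda^\top$. Since $\mathcal{L}_\lambda$ and $\mathcal{R}_\lambda$ are Hermitian (Lemma~\ref{lem:tech}), and a Hermitian matrix is positive definite if and only if its transpose is, we conclude $\lambda\in D\iff-\lambda-1\in D$. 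As $\lambda\mapsto-\lambda-1$ is a decreasing involution it swaps $\inf D$ with $\sup D$, so $\lambda_-=-\lambda_+-1$ and $(\lambda_-,\lambda_+)$ is symmetric about $-1/2$.

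It remains to combine $\varphi(-\lambda-1)=\varphi(\lambda)$ with the symmetry of the interval. In $I(-w)=\sup_{\lambda\in(\lambda_-,\lambda_+)}\{-w\lambda-\varphi(\lambda)\}$ I substitute $\mu:=-\lambda-1$, which ranges over $(\lambda_-,\lambda_+)$ as $\lambda$ does, to get $I(-w)=\sup_{\mu\in(\lambda_-,\lambda_+)}\{w\mu+w-\varphi(\mu)\}=w+I(w)$, the Gallavotti-Cohen symmetry. I expect the only genuine obstacle to be the matrix identity $\mathcal{L}_{-\lambda-1}=\mathcal{R}_\lambda^\top$: one must track transposes carefully through the Schur-complement-type expressions for $\mathcal{L}_\lambda$ and $\mathcal{R}_\lambda$, exploiting the special shape $L=-R$ of the boundary data together with the Hermiticity already furnished by Lemma~\ref{lem:tech}; everything else reduces to changes of variables.
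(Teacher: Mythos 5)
Your proposal is correct and follows essentially the same route as the paper: part 1 by specializing Theorem \ref{main}, and part 2 by exploiting $F_{-\lambda-1}(\theta)=F_\lambda(2\pi-\theta)$ to get $f_{-\lambda-1}=f_\lambda$ and $\varphi(-\lambda-1)=\varphi(\lambda)$, showing the set defining $\lambda_\pm$ is invariant under $\lambda\mapsto-\lambda-1$ via $H_{-\lambda-1}=K_\lambda^\top$ and $\mathcal{L}_{-\lambda-1}=\mathcal{R}_\lambda^\top$, $\mathcal{R}_{-\lambda-1}=\mathcal{L}_\lambda^\top$, and then changing variables in the Legendre transform. The only (cosmetic) difference is that you work with transposes where the paper uses adjoints together with Hermiticity (the paper gets $\mathcal{L}_{-\lambda-1}=\mathcal{R}_\lambda$ outright); since all the matrices involved are real this is equivalent, and your claimed transpose identity does check out.
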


If the drift matrix $S$ is symmetric and $\Sigma_o=\Sigma_s$, then the
process $\X$ is reversible and $e_N=0$ for all $N\ge 1$. The following
example shows that there is entropy production when $S$ is symmetric
but $\X$ is not stationary.
\begin{example}
  Assume that the drift matrix $S$ is symmetric. We have
  $\Sigma_s=(I-S^2)^{-1}$ and formula (\ref{F_e}) gives
  $F_\lambda(\theta)=\big(I-S e^{\mathrm{i}\theta}\big)\big(I-S
  e^{-\mathrm{i}\theta}\big)$ for every $\lambda$ and $\theta$. One
  can easily verify that $f_\lambda=[1-\rho(S)]^2>0$ and
  $\varphi(\lambda)=0$ for all $\lambda\in\Rl$. Starting from the
  identity $(I-Se^{\pm\mathrm{i}\theta})^{-1}=\sum_{k\ge 0}
  S^ke^{\pm\mathrm{i}k\theta}$ as $\rho(S)<1$, one can then deduce
  that for all $\lambda\in\Rl$
\begin{equation*}
\mathcal{L}_\lambda=\mathcal{R}_{-\lambda-1}=(\lambda+1)\Sigma_o^{-1}-\lambda\Sigma_s^{-1}.
\end{equation*}
Fix $\Sigma_o\succ 0$ different from $\Sigma_s$ and set
$\Delta:=(\Sigma_s-\Sigma_o)(\Sigma_s+\Sigma_o)^{-1}$. We claim that
the spectral radius $\rho(\Delta)$ of $\Delta$ is strictly positive
and that
\begin{equation}
    \lambda_\pm=\frac{1}{2}\bigg[-1\pm\frac{1}{\rho(\Delta)}\bigg].
\label{example_sym}
\end{equation}
The entropy production rate satisfies a large deviation principle with
the rate function
\begin{equation*}
  I(w)=\begin{cases}
  w\lambda_- & \mbox{if } w<0,\\
  w\lambda_+  & \mbox{if } w\ge 0.
  \end{cases}
\end{equation*}
To prove (\ref{example_sym}), let $A\in\Rl^{d\times d}$ be an
invertible matrix such that
$(1/2)(\Sigma_o^{-1}+\Sigma_s^{-1})=AA^\top$ and set
$B:=(1/2)A^{-1}(\Sigma_o^{-1}-\Sigma_s^{-1})(A^\top)^{-1}$.  The
matrix $A$ exists since $\Sigma_s\succ 0$ and $\Sigma_o\succ 0$, and
the spectral radius $\rho(B)$ of the symmetric matrix $B$ is strictly
positive since $\Sigma_o\ne\Sigma_s$. Similarity transformations show
that $\rho(B)=\rho(\Delta)$.  We have
$\mathcal{L}_\lambda=A[I+(2\lambda+1)B]A^\top\succ 0$ and
$\mathcal{R}_\lambda=A[I-(2\lambda+1)B]A^\top\succ 0$ if and only if
$|2\lambda+1|\rho(B)<1$. Thus, $(2\lambda_\pm+1)\rho(B)=\pm 1$.
\end{example}

\subsection{Entropy production with a normal drift matrix}
\label{sec:normal}

Analyzing the role of the conditions $\mathcal{L}_\lambda\succ 0$ and
$\mathcal{R}_\lambda\succ 0$ in determining those $\lambda\in\Rl$ for
which $\lim_{N\uparrow\infty}(1/N)\ln\Ex[e^{\lambda
    W_N}]=\varphi(\lambda)$ is a difficult task. We stress that the
satisfiability of these conditions shapes the effective domain
$(\lambda_-,\lambda_+)$ of the asymptotic cumulant generating function
of $W_N$. Now our interest is in the entropy production
$W_N:=Ne_N$. Computer simulations suggest that, in the stationary case
$\Sigma_o=\Sigma_s$, the Hermitian matrices $\mathcal{L}_\lambda$ and
$\mathcal{R}_\lambda$ associated with $Ne_N$ are automatically
positive-definite for the values of $\lambda$ that satisfy the primary
constraint $f_\lambda>0$. If this is true in general, then we will
conclude that $\lambda_-=\inf\{\lambda\in\Rl:f_\lambda>0\}$ and
$\lambda_+=\sup\{\lambda\in\Rl:f_\lambda>0\}$ when
$\Sigma_o=\Sigma_s$. While we leave this general problem as an open
question, we verify the conjecture
$\lambda_-=\inf\{\lambda\in\Rl:f_\lambda>0\}$ and
$\lambda_+=\sup\{\lambda\in\Rl:f_\lambda>0\}$ for a stationary stable
Gauss-Markov chain $\X$ with normal drift matrix $S$. Then, here we
assume that $S^\top S=SS^\top$. This case is very special because it
allows for explicit results.  We point out that large deviation
principles have been recently established for the entropy production
rate of stationary stable Ornstein-Uhlenbeck processes with normal
drift matrix \cite{Jaksic1,Jaksic2,Budhiraja2021}. In particular,
Budhiraja, Chen, and Xu \cite{Budhiraja2021} have exhibited explicitly
the rate function, posing the question of whether the same could have
been done for the discrete-time autoregressive model. Our work gives
an affirmative answer to their question, and indeed we provide a large
deviation principle for any drift matrix.

Dealing with a normal drift matrix in the problem of entropy
production basically means dealing with a diagonal drift matrix. In
fact, normality of $S$ implies that there exists a unitary matrix
$\Gamma\in\Cm^{d\times d}$ such that $\Gamma S\Gamma^{-1}$ and $\Gamma
S^\top\Gamma^{-1}=(\Gamma S\Gamma^{-1})^\dagger$ are both diagonal.
Let $\alpha_k+\mathrm{i}\beta_k$ be the $k$th element of the diagonal
of $\Gamma S\Gamma^{-1}$, with $\alpha_k$ and $\beta_k$ real numbers,
and notice that the stability hypothesis $\rho(S)<1$ requires that
$\alpha_k^2+\beta_k^2<1$ as $\alpha_k+\mathrm{i}\beta_k$ obviously is
an eigenvalue of $S$. We suppose that $\beta_k\ne 0$ for some $k$ in
order to not to fall again in the class of symmetric drift matrices.
According to (\ref{F_e}), $\Gamma F_\lambda(\theta)\Gamma^{-1}$ is
diagonal for all $\lambda\in\Rl$ and $\theta\in[0,2\pi]$, and the
$k$th element of the diagonal of $\Gamma F_\lambda(\theta)\Gamma^{-1}$
reads
\begin{equation*}
  1+\alpha_k^2+\beta_k^2-2\alpha_k\cos\theta-2\beta_k(2\lambda+1)\sin\theta=\big(1+\alpha_k^2+\beta_k^2\big)\big[1-\varrho_k\cos(\theta-\vartheta_k)\big]
\end{equation*}
with
\begin{equation*}
\varrho_k:=2\frac{\sqrt{\alpha_k^2+(2\lambda+1)^2\beta_k^2}}{1+\alpha_k^2+\beta_k^2}\ge 0
\end{equation*}
and
\begin{equation*}
  \vartheta_k:=\arctan\bigg(\frac{\beta_k+2\lambda\beta_k}{\alpha_k}\bigg).
\end{equation*}
We omit to indicate the dependence of $\varrho_k$ and $\vartheta_k$ on
$\lambda$ for simplicity. We have
\begin{equation*}
  f_\lambda=\inf_{\theta\in[0,2\pi]}\min_{1\le k\le d}\Big\{\big(1+\alpha_k^2+\beta_k^2\big)\big[1-\varrho_k\cos(\theta-\vartheta_k)\big]\Big\}
  =\min_{1\le k\le d}\Big\{\big(1+\alpha_k^2+\beta_k^2\big)\big(1-\varrho_k\big)\Big\},
\end{equation*}
so that the condition $f_\lambda>0$ on $\lambda$ becomes $\max_{1\le
  k\le d}\{\varrho_k\}<1$. If $\max_{1\le k\le d}\{\varrho_k\}<1$,
then we find from (\ref{def:phi})
\begin{align}
  \nonumber
  \varphi(\lambda)&=-\frac{1}{4\pi}\sum_{k=1}^d\int_0^{2\pi}\ln \Big\{\big(1+\alpha_k^2+\beta_k^2\big)\big[1-\varrho_k\cos(\theta-\vartheta_k)\big]\Big\}d\theta\\
  \nonumber
  &=-\frac{1}{4\pi}\sum_{k=1}^d\int_0^{2\pi}\ln \big(1-\varrho_k\cos\theta\big)d\theta-\frac{1}{2}\sum_{k=1}^d\ln\big(1+\alpha_k^2+\beta_k^2\big)\\
  \nonumber
  &=-\frac{1}{2}\sum_{k=1}^d \ln\frac{1+\sqrt{1-\varrho_k^2}}{2}-\frac{1}{2}\sum_{k=1}^d\ln\big(1+\alpha_k^2+\beta_k^2\big).
\end{align}
For each $n\in\mathbb{Z}$, the matrix
$\Gamma\Phi_\lambda(n)\Gamma^{-1}$ defined by (\ref{def:Fourierc}) is
diagonal with $k$th diagonal element equal to
\begin{align}
  \nonumber
  \frac{1}{2\pi}\int_0^{2\pi}\frac{e^{-\mathrm{i}n\theta}d\theta}{(1+\alpha_k^2+\beta_k^2)[1-\varrho_k\cos(\theta-\vartheta_k)]}
  &=\frac{e^{-\mathrm{i}n\vartheta_k}}{1+\alpha_k^2+\beta_k^2}\frac{1}{2\pi}\int_0^{2\pi}\frac{\cos(n\theta)d\theta}{1-\varrho_k\cos\theta}\\
  \nonumber
  &=\frac{1}{1+\alpha_k^2+\beta_k^2}\frac{e^{-\mathrm{i}n\vartheta_k}}{\sqrt{1-\varrho_k^2}}\bigg(\frac{1-\sqrt{1-\varrho_k^2}}{\varrho_k}\bigg)^{|n|}.
\end{align}
Under the constraint $\max_{1\le k\le d}\{\varrho_k\}<1$, the matrices
$\mathcal{L}_\lambda$ and $\mathcal{R}_\lambda$ associated by Lemma
\ref{lem:tech} with $L:=I-\Sigma_o^{-1}-S^\top S$, $U:=0$,
$R:=\Sigma_o^{-1}+S^\top S-I$, and $V:=S-S^\top$ can be written as
\begin{equation}
  \mathcal{L}_\lambda=(\lambda+1)(\Sigma_o^{-1}-\Sigma_s^{-1})+\mathcal{M}_\lambda
\label{L_normal}
\end{equation}
and
\begin{equation}
  \mathcal{R}_\lambda=\lambda(\Sigma_s^{-1}-\Sigma_o^{-1})+\mathcal{M}_\lambda,
\label{R_normal}
\end{equation}
where $\Gamma\mathcal{M}_\lambda\Gamma^{-1}\in\Cm^{d\times d}$ is
diagonal with $k$th diagonal element given by
\begin{equation}
\frac{1-\alpha_k^2-\beta_k^2}{2}+\frac{1+\alpha_k^2+\beta_k^2}{2}\sqrt{1-\varrho_k^2}>0.
\label{eig_M_normal}
\end{equation}
To obtain (\ref{L_normal}) and (\ref{R_normal}) we have used the
facts that $\Sigma_s=(I-SS^\top)^{-1}$ and that
$\Gamma\Sigma_s^{-1}\Gamma^{-1}$ is diagonal with $k$th diagonal entry
equal to $1-\alpha_k^2-\beta_k^2$. Importantly, the Hermitian matrix
$\mathcal{M}_\lambda$ is positive-definite as demonstrated by
(\ref{eig_M_normal}).

If the chain $\X$ is stationary, i.e.\ if $\Sigma_o=\Sigma_s$, then
$\mathcal{L}_\lambda=\mathcal{M}_\lambda$ and
$\mathcal{R}_\lambda=\mathcal{M}_\lambda$ are automatically
positive-definite when $\max_{1\le k\le d}\{\varrho_k\}<1$, namely
when $f_\lambda>0$. Thus, the conjecture
$\lambda_-=\inf\{\lambda\in\Rl:f_\lambda>0\}$ and
$\lambda_+=\sup\{\lambda\in\Rl:f_\lambda>0\}$ for a stationary stable
Gauss-Markov chain is true if the drift matrix is normal. Furthermore,
in this case $\lambda_-$ and $\lambda_+$ are the smallest and the
largest values of $\lambda$ for which $\max_{1\le k\le
  d}\{\varrho_k\}=1$, which are explicitly given by the formulas
\begin{equation*}
\lambda_+=\lambda_o:=-\frac{1}{2}+\min_{1\le k\le d}\Bigg\{\sqrt{\frac{(1+\alpha_k^2+\beta_k^2)^2-4\alpha_k^2}{16\beta_k^2}}\Bigg\}
\end{equation*}
and
\begin{equation*}
\lambda_-=-\lambda_o-1.
\end{equation*}
Notice that $\lambda_o$ is finite since we are supposing that
$\beta_k\ne 0$ for some $k$.  With such $\lambda_-$ and $\lambda_+$,
the function $\varphi$ turns out to be steep in
$(\lambda_-,\lambda_+)$. Thus, for each $w\in\Rl$ there exists a
unique $\lambda\in(\lambda_-,\lambda_+)$ such that
$w=\varphi'(\lambda)$ and, as a consequence,
\begin{equation*}
I(w)=w\lambda-\varphi(\lambda).
\end{equation*}
Basically, this is the result found by Budhiraja, Chen, and Xu
\cite{Budhiraja2021} for the continuous-time model.

To conclude, let us briefly discuss what happens when the chain $\X$
is not stationary, i.e.\ when $\Sigma_o\ne\Sigma_s$. If
$\mathcal{L}_\lambda\succ 0$ and $\mathcal{R}_\lambda\succ 0$ for all
$\lambda\in(-\lambda_o-1,\lambda_o)$, then $\lambda_+=\lambda_o$ and
$\lambda_-=-\lambda_o-1$, as before, and the function $\varphi$ is
steep in $(\lambda_-,\lambda_+)$. We have $\mathcal{L}_\lambda\succ 0$
and $\mathcal{R}_\lambda\succ 0$ for all
$\lambda\in(-\lambda_o-1,\lambda_o)$ if
$\mathcal{L}_{\lambda_o}=\mathcal{R}_{-\lambda_o-1}\succ 0$ and
$\mathcal{R}_{\lambda_o}=\mathcal{L}_{-\lambda_o-1}\succ 0$ as
formulas (\ref{L_normal}) and (\ref{R_normal}) show that the functions
that map $\lambda$ in $\langle z,\mathcal{L}_\lambda z\rangle$ and
$\langle z,\mathcal{R}_\lambda z\rangle$ are concave for any given
$z\in\Cm^d$.  If, on the contrary, there exists
$\lambda\in(-\lambda_o-1,\lambda_o)$ such that
$\mathcal{L}_\lambda\nsucc 0$ or $\mathcal{R}_\lambda\nsucc 0$, then
$\lambda_+<\lambda_o$ and $\lambda_-=-\lambda_+-1>-\lambda_o-1$.  For
example, this occurs for $\Sigma_o=\sigma I$ with a sufficiently small
$\sigma>0$.  In this case $\varphi$ is not steep in
$(\lambda_-,\lambda_+)$ and the rate function at $w\in\Rl$ has the
value
\begin{equation*}
  I(w)=\begin{cases}
  w\lambda_--\varphi_- & \mbox{if }w\le d_-,\\
  w\lambda-\varphi(\lambda) & \mbox{if } d_-<w<d_+, \\
  w\lambda_+-\varphi_+ & \mbox{if }w\ge d_+,
  \end{cases}
\end{equation*}
where, regarding the case $d_-<w<d_+$, $\lambda$ is the unique real
number in $(\lambda_-,\lambda_+)$ that satisfies
$w=\varphi'(\lambda)$. Breaking stationarity can then involve affine
stretches in the graph of the entropy production rate function.

\section{Proof of Lemma \ref{lem:tech} and Theorem \ref{main}}
\label{sec:proofLDP}

In this section we prove Theorem \ref{main}, which states the large
deviation principle for the quadratic functional $W_N$ defined by
(\ref{def:W}). The proof of Theorem \ref{main} is based on a
time-dependent change of measure and requires at first to study the
asymptotics of the cumulant generating function of $W_N$ as $N$ goes
to infinity. In turn, this asks for investigation of Hermitian block
tridiagonal quasi-Toeplitz matrices that differ from Hermitian block
tridiagonal Toeplitz matrices by the first and last diagonal
blocks. In Section \ref{sec:QT} we introduce these matrices and
characterize their positive definiteness property and their
determinant. Section \ref{sec:cum} uses the theory of Section
\ref{sec:QT} to compute the scaled cumulant generating function of
$W_N$ in the large $N$ limit.  The upper large deviation bound for
closed sets is proved in Section \ref{sec:upper}. Finally, the lower
large deviation bound for open sets is established in Section
\ref{sec:lower}. Along the way we shall also verify Lemma
\ref{lem:tech}.

As we have already said, we regard $\Rl^d$ as a subset of $\Cm^d$. We
denote by $\langle\cdot,\cdot\rangle$ the standard inner product of
$\Cm^d$ and by $\|\cdot\|$ the induced norm.  If
$\zeta=(\zeta_1,\ldots,\zeta_N)$ and $z=(z_1,\ldots, z_N)$ are two
vectors in $(\Cm^d)^N$, $N$ being a positive integer, we understand
that $\langle \zeta,z\rangle:=\sum_{n=1}^N\langle \zeta_n,z_n\rangle$
and $\|z\|^2:=\sum_{n=1}^N\langle
z_n,z_n\rangle=\sum_{n=1}^N\|z_n\|^2$. For positive integers $M$ and
$N$, $\mathsf{BL}_{M,N}$ is the set of complex block matrices with
$M\times N$ square blocks of size $d$. For any
$\mathsf{A}\in\mathsf{BL}_{N,N}$, $\|\mathsf{A}\|$ is the operator
norm of $\mathsf{A}$ induced by the norm of $(\Cm^d)^N$:
\begin{equation*}
\|\mathsf{A}\|:=\sup_{\substack{z\in(\Cm^d)^N\\z\ne 0}}\bigg\{\frac{\|\mathsf{A}z\|}{\|z\|}\bigg\}.
\end{equation*}
Given a Hermitian matrix $\mathsf{A}\in\mathsf{BL}_{N,N}$, we denote
by $r(\mathsf{A})$ the infimum of the Rayleigh quotient of a
$\mathsf{A}$, that is the smallest eigenvalue of $\mathsf{A}$:
\begin{equation*}
r(\mathsf{A}):=\inf_{\substack{z\in(\Cm^d)^N\\z\ne 0}}\bigg\{\frac{\langle z,\mathsf{A}z\rangle}{\langle z,z\rangle}\bigg\}.
\end{equation*}
If a Hermitian matrix $\mathsf{A}\in\mathsf{BL}_{N,N}$ is
positive-definite we write $\mathsf{A}\succ 0$. We have
$\mathsf{A}\succ 0$ if and only if $r(\mathsf{A})>0$.

\subsection{On Hermitian block tridiagonal quasi-Toeplitz matrices}
\label{sec:QT}

The coefficient matrix of the quadratic functional $W_N$ is an element
from a sequence of Hermitian block tridiagonal matrices in the
following class.
\begin{definition}
  \label{defQ}
  A sequence of matrices $\Qseq$, with
  $\mathsf{Q}_N\in\mathsf{BL}_{N+2,N+2}$ for each $N$, is a Hermitian
  block tridiagonal quasi-Toeplitz (HQT) matrix sequence if there
  exist four square matrices $A$, $D$, $B$, and $E$ of size $d$, with
  $A$, $D$, and $B$ Hermitian, such that for all $N\ge 1$
  \begin{equation*}
  \mathsf{Q}_N=\begin{pmatrix}
  A & E^\dagger &  & & \\
    E & D & \ddots &  &\\
      & \ddots & \ddots & \ddots  & \\
     & & \ddots & D & E^\dagger\\
     & &  & E & B
  \end{pmatrix}.
  \end{equation*}
\end{definition}

In this section we characterize asymptotic positive definiteness and
asymptotic determinants of matrices from a HQT matrix sequence,
postponing the most technical proofs in the appendices. We stress that
a HQT matrix sequence is bounded in the following sense, which is
demonstrated in Appendix \ref{proof:norm}.
\begin{lemma}
  \label{lem:norm}
Let $\Qseq$ be a HQT matrix sequence with $A$, $D$, $B$, and $E$ as in
the above definition. Then, for each $N\ge 1$
\begin{equation*}
\|\mathsf{Q}_N\|\le\sqrt{2\|A\|^2+3\|D\|^2+2\|B\|^2+6\|E\|^2}.
\end{equation*}
\end{lemma}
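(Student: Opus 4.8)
The plan is to estimate $\|\mathsf{Q}_Nz\|$ block-row by block-row. Fix $N\ge 1$ and a vector $z=(z_1,\dots,z_{N+2})\in(\Cm^d)^{N+2}$. According to Definition \ref{defQ}, the block components of $\mathsf{Q}_Nz$ are
\[
(\mathsf{Q}_Nz)_1=Az_1+E^\dagger z_2,\qquad (\mathsf{Q}_Nz)_{N+2}=Ez_{N+1}+Bz_{N+2},
\]
together with $(\mathsf{Q}_Nz)_k=Ez_{k-1}+Dz_k+E^\dagger z_{k+1}$ for $2\le k\le N+1$. I would bound the two boundary rows using $\|u+v\|^2\le 2\|u\|^2+2\|v\|^2$ and each interior row using $\|u+v+w\|^2\le 3\|u\|^2+3\|v\|^2+3\|w\|^2$, and then apply $\|Az_1\|\le\|A\|\,\|z_1\|$ and the like together with $\|E^\dagger\|=\|E\|$. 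This gives
\[
\|(\mathsf{Q}_Nz)_1\|^2\le 2\|A\|^2\|z_1\|^2+2\|E\|^2\|z_2\|^2,\qquad \|(\mathsf{Q}_Nz)_{N+2}\|^2\le 2\|E\|^2\|z_{N+1}\|^2+2\|B\|^2\|z_{N+2}\|^2,
\]
and, for $2\le k\le N+1$, $\|(\mathsf{Q}_Nz)_k\|^2\le 3\|E\|^2\|z_{k-1}\|^2+3\|D\|^2\|z_k\|^2+3\|E\|^2\|z_{k+1}\|^2$.

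The second step is to sum these bounds over $k$ and regroup the right-hand side as $\sum_{j=1}^{N+2}c_j\|z_j\|^2$. Each $\|z_j\|^2$ picks up $3\|D\|^2$ (or $2\|A\|^2$, resp.\ $2\|B\|^2$, when $j=1$, resp.\ $j=N+2$) from the diagonal block sitting in its own row, plus at most $3\|E\|^2$ (or $2\|E\|^2$ at the boundary) from each of its at most two neighbouring rows. Running through the cases $j\in\{1,2\}$, $3\le j\le N$, and $j\in\{N+1,N+2\}$ — the middle list being empty or short for small $N$ — one checks that $c_j\le 2\|A\|^2+3\|D\|^2+2\|B\|^2+6\|E\|^2$ in every case, since $c_j$ is always a sub-sum of $2\|A\|^2$, $3\|D\|^2$, $2\|B\|^2$ and two copies of $3\|E\|^2$; the extremal case is a deep interior index, for which $c_j=3\|D\|^2+3\|E\|^2+3\|E\|^2$. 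Therefore
\[
\|\mathsf{Q}_Nz\|^2=\sum_{j=1}^{N+2}\|(\mathsf{Q}_Nz)_j\|^2\le\big(2\|A\|^2+3\|D\|^2+2\|B\|^2+6\|E\|^2\big)\|z\|^2,
\]
and dividing by $\|z\|^2$, taking the supremum over $z\ne 0$, and extracting the square root yields the stated bound.

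There is no genuine obstacle here: the argument is just the triangle inequality plus bookkeeping of coefficients. The only point that needs a little attention is the coefficient count near the two endpoints (and for $N=1,2$, where the interior range $2\le k\le N+1$ is very short), but the boundary contributions that arise there, such as $2\|A\|^2+3\|E\|^2$ and $3\|D\|^2+4\|E\|^2$, are already dominated by $2\|A\|^2+3\|D\|^2+2\|B\|^2+6\|E\|^2$, so no separate treatment is required. One could alternatively split $\mathsf{Q}_N$ into its block-diagonal part and its two off-diagonal parts and bound each summand by its operator norm, but that route yields a slightly different constant.
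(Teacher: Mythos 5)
Your proof is correct and follows essentially the same route as the paper: bound each block-row of $\mathsf{Q}_Nz$ via the triangle inequality together with $(a+b)^2\le 2a^2+2b^2$ and $(a+b+c)^2\le 3(a^2+b^2+c^2)$, then sum and regroup the coefficients of the $\|z_j\|^2$. The coefficient bookkeeping, including the boundary cases, matches the paper's computation.
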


In order to deal with a HQT matrix sequence $\Qseq$, it is convenient
to isolate the \textit{bulk matrix} $\mathsf{T}_N\in\mathsf{BL}_{N,N}$
of $\mathsf{Q}_N$ defined by
 \begin{equation}
  \mathsf{T}_N:=\begin{pmatrix}
    D & E^\dagger &  &   \\
    E & \ddots & \ddots &   \\
       &  \ddots   &  \ddots & E^\dagger\\
     &  &  E & D
  \end{pmatrix}.
 \end{equation}
The bulk matrix $\mathsf{T}_N$ is a Hermitian block tridiagonal Toeplitz
matrix, which allows $\mathsf{Q}_N$ to be written as
\begin{equation}
  \mathsf{Q}_N=\begin{pmatrix}
    A & E^\dagger\mathsf{C}^\dagger &  0 \\
    \mathsf{C}E & \mathsf{T}_N & \mathsf{R}^\dagger E^\dagger \\
    0 & E\mathsf{R} & B
  \end{pmatrix},
  \label{QconT}
\end{equation}
where
\begin{equation}
  \mathsf{C}:=\begin{pmatrix}
    I \\
    0\\
    \vdots \\
    0
  \end{pmatrix}
  \in\mathsf{BL}_{N,1}
  \label{defCmat}
\end{equation}
and
\begin{equation}
  \mathsf{R}:=\begin{pmatrix}
   0 &\cdots& 0 & I
  \end{pmatrix}
  \in\mathsf{BL}_{1,N}.
\label{defRmat}
\end{equation}
 When $\mathsf{T}_N$ is invertible we introduce the \textit{boundary
   matrix} $\mathsf{S}_N\in\mathsf{BL}_{2,2}$ of $\mathsf{Q}_N$ defined
 by
\begin{equation}
\mathsf{S}_N:=\begin{pmatrix}
    A-E^\dagger \mathsf{C}^\dagger \mathsf{T}_N^{-1}\mathsf{C}E &  -E^\dagger\mathsf{C}^\dagger\mathsf{T}_N^{-1}\mathsf{R}^\dagger E^\dagger \\[0.3em]
    -E\mathsf{R}\mathsf{T}_N^{-1}\mathsf{C}E &  B-E\mathsf{R}\mathsf{T}_N^{-1}\mathsf{R}^\dagger E^\dagger
\end{pmatrix}.
\label{QS}
\end{equation}
Manifestly, $\mathsf{S}_N$ is a Hermitian matrix.  The following lemma
relates the positive definiteness and the determinant of
$\mathsf{Q}_N$ to those of the bulk matrix $\mathsf{T}_N$ and the
boundary matrix $\mathsf{S}_N$. The proof is reported in Appendix
\ref{proof:primo}.
\begin{lemma}
  \label{primo}
Let $\Qseq$ be a HQT matrix sequence with bulk matrices $\mathsf{T}_N$
and boundary matrices $\mathsf{S}_N$. The following
conclusions hold for any $N\ge 1$:
  \begin{enumerate}
  \item if $r(\mathsf{Q}_N)\ge q$ for some real number $q>0$, then
    $r(\mathsf{T}_N)\ge q$ (which implies that
    $\mathsf{T}_N$ is invertible) and $r(\mathsf{S}_N)\ge q$;
  \item if $\mathsf{T}_N\succ \mathsf{0}$ (which implies that
    $\mathsf{T}_N$ is invertible) and $\mathsf{S}_N\succ\mathsf{0}$,
    then $\mathsf{Q}_N\succ \mathsf{0}$ and
  \begin{equation*}
    \ln\det\mathsf{Q}_N=\ln\det \mathsf{T}_N+\ln\det \mathsf{S}_N.
  \end{equation*}
  \end{enumerate}
\end{lemma}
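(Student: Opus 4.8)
The plan is to exploit the block decomposition (\ref{QconT}), which displays $\mathsf{Q}_N$ as a bordered matrix with the invertible (once we know it is) bulk block $\mathsf{T}_N$ in the middle. First I would prove conclusion (1). Assume $r(\mathsf{Q}_N)\ge q>0$. Testing the Rayleigh quotient of $\mathsf{Q}_N$ against vectors of the form $(0,z,0)$ with $z\in(\Cm^d)^N$ immediately gives $\langle z,\mathsf{T}_Nz\rangle\ge q\langle z,z\rangle$, hence $r(\mathsf{T}_N)\ge q$ and in particular $\mathsf{T}_N$ is invertible, so $\mathsf{S}_N$ is well-defined. For the bound $r(\mathsf{S}_N)\ge q$, the natural move is to recognize $\mathsf{S}_N$ as the Schur complement of $\mathsf{T}_N$ in $\mathsf{Q}_N$: indeed, with $\mathsf{P}:=\big(\begin{smallmatrix}\mathsf{C}E\\ \mathsf{R}^\dagger E^\dagger\end{smallmatrix}\big)\in\mathsf{BL}_{N,2}$ (reading the off-diagonal border of (\ref{QconT})), one has
\begin{equation*}
\mathsf{S}_N=\begin{pmatrix}A&0\\0&B\end{pmatrix}-\mathsf{P}^\dagger\mathsf{T}_N^{-1}\mathsf{P}.
\end{equation*}
Then for any $u\in(\Cm^d)^2$, choosing $z:=-\mathsf{T}_N^{-1}\mathsf{P}u$ and evaluating the quadratic form of $\mathsf{Q}_N$ on the vector whose middle block is $z$ and whose top/bottom blocks are dictated by $u$, a direct computation yields $\langle u,\mathsf{S}_Nu\rangle=\langle (u,z),\mathsf{Q}_N(u,z)\rangle\ge q\,\|(u,z)\|^2\ge q\,\langle u,u\rangle$, which is conclusion (1).

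For conclusion (2), assume $\mathsf{T}_N\succ 0$ and $\mathsf{S}_N\succ 0$. The standard Schur-complement factorization applies: with the block unitriangular matrix
\begin{equation*}
\mathsf{M}:=\begin{pmatrix}I&0\\ \mathsf{T}_N^{-1}\mathsf{P}&I\end{pmatrix}\in\mathsf{BL}_{N+2,N+2}
\end{equation*}
(suitably interpreted with the $2$-block and $N$-block split of (\ref{QconT})), one gets the congruence
\begin{equation*}
\mathsf{Q}_N=\mathsf{M}^\dagger\begin{pmatrix}\mathsf{T}_N&0\\0&\mathsf{S}_N\end{pmatrix}\mathsf{M}.
\end{equation*}
Since $\mathsf{M}$ is invertible with $\det\mathsf{M}=1$, this congruence gives simultaneously that $\mathsf{Q}_N\succ 0$ (a congruence by an invertible matrix preserves positive definiteness) and that $\det\mathsf{Q}_N=\det\mathsf{T}_N\cdot\det\mathsf{S}_N$; taking logarithms yields the determinant identity, the logarithms being well-defined because all three matrices are positive-definite.

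The only genuinely delicate point is bookkeeping: the decomposition (\ref{QconT}) partitions $\mathsf{Q}_N$ into a $1$--$N$--$1$ (block-)pattern, whereas $\mathsf{S}_N$ lives in $\mathsf{BL}_{2,2}$, so one must consistently reshuffle the first and last block-rows/columns together before applying the Schur-complement identity — i.e. conjugate by the obvious permutation that moves the last block to sit next to the first. None of this is hard, but it is the step where sign and ordering errors creep in; once the $2$-versus-$N$ splitting is fixed, both conclusions are immediate consequences of the congruence above, and I would simply record the permutation explicitly and then invoke the Schur-complement lemma in the form stated.
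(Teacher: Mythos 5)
Your proposal is correct and takes essentially the same route as the paper: part 1 is obtained there by restricting the Rayleigh quotient of $\mathsf{Q}_N$ first to vectors with vanishing boundary blocks and then to the vector with middle block $t=-\mathsf{T}_N^{-1}\mathsf{C}Ea-\mathsf{T}_N^{-1}\mathsf{R}^\dagger E^\dagger b$, exactly your choice $z=-\mathsf{T}_N^{-1}\mathsf{P}u$. Part 2 in the paper is the same Schur-complement congruence, written with the block-unitriangular matrix in the $1$--$N$--$1$ ordering and with the permutation of block rows and columns invoked only at the determinant step, so the two arguments coincide.
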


We now examine the bulk matrices. For each $\theta\in[0,2\pi]$, let
$F(\theta)\in\Cm^{d\times d}$ be a Hermitian matrix defined by
\begin{equation*}
  F(\theta):=E e^{-\mathrm{i}\theta}+D+E^\dagger e^{\mathrm{i}\theta},
\end{equation*}
$D$ and $E$ being the matrices that identify the bulk matrix
$\mathsf{T}_N$ of $\mathsf{Q}_N$.  In the theory of block Toeplitz
matrices \cite{Toeplitz}, the function $F$ that maps $\theta$ in
$F(\theta)$ is called the \textit{symbol} of the matrices
$\mathsf{T}_N$. We shall equally call $F$ the symbol of $\mathsf{T}_N$
or the symbol of $\mathsf{Q}_N$.  The blocks of $\mathsf{T}_N$ are
related to the Fourier coefficients of the symbol $F$. In fact, for
all $N\ge 1$, $\zeta=(\zeta_1,\ldots,\zeta_N)\in(\Cm^d)^N$, and
$z=(z_1,\ldots,z_N)\in(\Cm^d)^N$ we have
  \begin{align}
    \nonumber
    \langle \zeta,\mathsf{T}_Nz\rangle
  &=\sum_{m=1}^N\sum_{n=1}^N \Bigg\langle \zeta_m,\frac{1}{2\pi}\int_0^{2\pi}F(\theta)e^{\mathrm{i}(m-n)\theta}d\theta \, z_n\Bigg\rangle\\
  &=\frac{1}{2\pi}\int_0^{2\pi}\Bigg\langle \sum_{n=1}^N\zeta_ne^{-\mathrm{i}n\theta},F(\theta)\sum_{n=1}^Nz_ne^{-\mathrm{i}n\theta}\Bigg\rangle \,d\theta .
  \label{TFrel}
\end{align}
The following lemma describes the positive definiteness and the
determinant of the bulk matrices $\mathsf{T}_N$. The proof is provided
in Appendix \ref{proof:secondo}. We stress that if
$\inf_{\theta\in[0,2\pi]}\{r(F(\theta))\}>0$, then the function that
associates $\theta\in[0,2\pi]$ with $\ln\det F(\theta)$ is
well-defined and continuous.
\begin{lemma}
  \label{secondo}
  Let $\mathsf{T}_N$ be the bulk matrices of a HQT matrix sequence
  with symbol $F$.  The following conclusions hold:
 \begin{enumerate}
\item if there exists a diverging sequence $\{N_k\}_{k\ge 0}$ of
  positive integers such that $r(\mathsf{T}_{N_k})\ge t$ for all $k\ge
  0$ with some $t\in\Rl$, then $r(\mathsf{T}_N)\ge t$ for all $N\ge
  1$;
  \item $r(\mathsf{T}_N)\ge t$ for all $N\ge 1$ with some $t\in\Rl$ if
    and only if $\inf_{\theta\in[0,2\pi]}\{r(F(\theta))\}\ge t$;
\item if $\inf_{\theta\in[0,2\pi]}\{r(F(\theta))\}>0$, then 
  \begin{equation*}
    \lim_{N\uparrow\infty}\frac{1}{N}\ln\det \mathsf{T}_N=\frac{1}{2\pi}\int_0^{2\pi}\ln\det F(\theta) \,d\theta.
  \end{equation*}
 \end{enumerate}
\end{lemma}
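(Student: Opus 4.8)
The plan is to reduce everything to the spectral theory of block Toeplitz matrices associated with a continuous Hermitian symbol $F$. For part (2), I would prove both implications. For the ``if'' direction, suppose $\inf_{\theta}\{r(F(\theta))\}\ge t$, i.e.\ $F(\theta)\succeq tI$ for all $\theta$. Then for any $z=(z_1,\dots,z_N)\in(\Cm^d)^N$, formula (\ref{TFrel}) gives
\begin{equation*}
\langle z,\mathsf{T}_Nz\rangle=\frac{1}{2\pi}\int_0^{2\pi}\Big\langle p_z(\theta),F(\theta)p_z(\theta)\Big\rangle\,d\theta\ge\frac{t}{2\pi}\int_0^{2\pi}\|p_z(\theta)\|^2\,d\theta,
\end{equation*}
where $p_z(\theta):=\sum_{n=1}^Nz_ne^{-\mathrm{i}n\theta}$, and Parseval's identity $\frac{1}{2\pi}\int_0^{2\pi}\|p_z(\theta)\|^2\,d\theta=\sum_{n=1}^N\|z_n\|^2=\|z\|^2$ yields $r(\mathsf{T}_N)\ge t$ for every $N$. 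For the ``only if'' direction, I would argue by contradiction: if $r(F(\theta_0))<t$ for some $\theta_0$, pick a unit vector $u\in\Cm^d$ with $\langle u,F(\theta_0)u\rangle<t$; by continuity of $F$ this persists on a small arc around $\theta_0$. Then I would build, for large $N$, a test vector of the form $z_n=\psi(n/N)\,e^{-\mathrm{i}n\theta_0}u$ (or more simply $z_n=e^{-\mathrm{i}n\theta_0}u$ smoothed near the endpoints by a fixed window) so that $p_z$ concentrates near $\theta_0$; a standard localization estimate then gives $\langle z,\mathsf{T}_Nz\rangle/\|z\|^2<t$ for $N$ large, contradicting $r(\mathsf{T}_N)\ge t$. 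Part (1) is then an immediate corollary: if $r(\mathsf{T}_{N_k})\ge t$ along a diverging subsequence, the ``only if'' part of (2) applied along that subsequence forces $\inf_\theta\{r(F(\theta))\}\ge t$, and then the ``if'' part gives $r(\mathsf{T}_N)\ge t$ for all $N$.

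For part (3), under the hypothesis $\inf_\theta\{r(F(\theta))\}>0$, the matrices $\mathsf{T}_N$ are uniformly positive-definite by part (2), hence invertible with $\|\mathsf{T}_N^{-1}\|$ bounded uniformly in $N$, and $\ln\det F(\theta)$ is continuous. The statement $\frac{1}{N}\ln\det\mathsf{T}_N\to\frac{1}{2\pi}\int_0^{2\pi}\ln\det F(\theta)\,d\theta$ is exactly the first Szeg\H{o}-type limit theorem for block Toeplitz matrices. I would either invoke the classical result (e.g.\ from the block Toeplitz reference \cite{Toeplitz}) or reprove it from scratch: write $\ln\det\mathsf{T}_N=\operatorname{tr}\ln\mathsf{T}_N$ and, using that $F(\theta)$ takes values in a fixed compact subset of positive-definite matrices, approximate $\ln$ uniformly on that compact set by polynomials $P$; for a polynomial $P$, $\operatorname{tr}P(\mathsf{T}_N)$ can be computed via the block symbol calculus, and $\frac{1}{N}\operatorname{tr}P(\mathsf{T}_N)\to\frac{1}{2\pi}\int_0^{2\pi}\operatorname{tr}P(F(\theta))\,d\theta$ because the block Toeplitz structure means the ``bulk'' contribution dominates and boundary corrections are $O(1/N)$. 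Passing from the polynomial approximation back to $\ln$ is uniform because of the spectral bounds, so the limit transfers.

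The main obstacle is the converse in part (2): constructing the localizing test vectors and controlling the error terms rigorously. The subtlety is that a pure plane wave $z_n=e^{-\mathrm{i}n\theta_0}u$ does \emph{not} make $p_z$ a delta at $\theta_0$ — it produces a Dirichlet kernel of width $O(1/N)$ — so one must check that the off-$\theta_0$ mass of $p_z$, weighted by $F(\theta)$, is negligible relative to the main term; using a smooth window $\psi$ supported on $[0,1]$ sharpens the concentration and makes the estimate clean, at the cost of a short computation with $\|p_z\|^2$ and $\langle p_z,Fp_z\rangle$. I expect part (3), while technically the heaviest if reproved from scratch, to be essentially bookkeeping once the uniform spectral bounds from part (2) are in hand; if the paper is willing to cite the block Szeg\H{o} theorem, part (3) is immediate.
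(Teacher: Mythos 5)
Your proposal is correct in substance, and two of its three pieces coincide with the paper: the ``if'' half of part 2 is exactly the paper's argument (formula (\ref{TFrel}) plus Parseval), and for part 3 the paper does precisely what you suggest as the easy option, namely cite the Szeg\H{o} limit theorem for Hermitian block Toeplitz matrices (\cite{Toeplitz}, Theorem 7); your sketched reproof via $\operatorname{tr}\ln\mathsf{T}_N$ and polynomial approximation is a legitimate alternative but is not needed. Where you genuinely diverge is in the converse of part 2 and in part 1. For the converse the paper does not localize at a bad frequency $\theta_0$: it fixes an arbitrary $u\in\Cm^d$ and an arbitrary positive continuous $2\pi$-periodic weight $\varphi$, approximates $\sqrt{\varphi}$ uniformly by trigonometric polynomials $p_N$, plugs the polynomial coefficients (times $u$) into (\ref{TFrel}) to get $\frac{1}{2\pi}\int_0^{2\pi}\langle u,F(\theta)u\rangle\,p_N^2(\theta)\,d\theta\ge t\,\langle u,u\rangle\,\frac{1}{2\pi}\int_0^{2\pi}p_N^2(\theta)\,d\theta$, and then lets $N\uparrow\infty$ and uses the arbitrariness of $\varphi$ and $u$ to conclude $\langle u,F(\theta)u\rangle\ge t\langle u,u\rangle$ pointwise. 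This is dual to your windowed-plane-wave construction: the paper offloads the analytic work onto a standard uniform approximation theorem and avoids any Dirichlet-kernel/leakage estimate, whereas your route is more hands-on and still owes the concentration bound you only sketch (it does go through, since $F$ is a degree-one trigonometric polynomial and hence bounded, but it is the heaviest part of your plan). For part 1 the paper has a one-line argument worth adopting: padding $z\in(\Cm^d)^N$ with a zero block gives $r(\mathsf{T}_{N+1})\langle z,z\rangle\le\langle z,\mathsf{T}_Nz\rangle$, so $r(\mathsf{T}_N)$ is non-increasing in $N$ and a lower bound along any diverging subsequence propagates to all $N$. If you keep your derivation of part 1 from part 2, make explicit that your contradiction argument for the ``only if'' direction uses the bound $r(\mathsf{T}_N)\ge t$ only for arbitrarily large $N$, since part 2 as stated assumes it for all $N$; with that remark your logic is sound, but the monotonicity argument is simpler and makes part 1 independent of part 2.
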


The analysis of the boundary matrices $\mathsf{S}_N$ is based on the
possibility to determine a limit boundary matrix when $N$ is sent to
infinity. This is done by the following lemma, which is proved in
Appendix \ref{proof:terzo}. Let $A$, $D$, $B$, and $E$ as in
Definition \ref{defQ}. Set for each $n\in\mathbb{Z}$
\begin{equation*}
  \Phi(n):=\frac{1}{2\pi}\int_0^{2\pi} F^{-1}(\theta)e^{-\mathrm{i}n\theta}d\theta,
\end{equation*}
which is a well-defined matrix under the hypothesis
$\inf_{\theta\in[0,2\pi]}\{r(F(\theta))\}>0$.
\begin{lemma}
  \label{lem:terzo}
Let $\mathsf{S}_N$ be the boundary matrices of a HQT matrix sequence
with symbol $F$. Assume that
$\inf_{\theta\in[0,2\pi]}\{r(F(\theta))\}>0$.  The following
conclusions hold:
\begin{enumerate}
\item $H:=I-E\Phi(1)\in\Cm^{d\times d}$
and $K:=I-\Phi(1)E\in\Cm^{d\times d}$ are invertible;
\item the limit
  $\lim_{N\uparrow\infty}\mathsf{S}_N=:\mathsf{S}_\infty$ exists and
  $\mathsf{S}_\infty=\begin{psmallmatrix}\mathcal{L} & 0 \\ 0 &
  \mathcal{R} \end{psmallmatrix}$ with Hermitian matrices
  $\mathcal{L}$ and $\mathcal{R}$ defined, respectively, by
  \begin{equation*}
  \mathcal{L}:=A-E^\dagger \Phi(0)H^{-1}E 
  \end{equation*}
  and
  \begin{equation*}
  \mathcal{R}:=B-EK^{-1}\Phi(0)E^\dagger.
  \end{equation*}
\end{enumerate}
\end{lemma}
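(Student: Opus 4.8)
The plan is to express $\mathsf{S}_N$ entirely through the four corner blocks of $\mathsf{T}_N^{-1}$ and to pin down their limits by comparison with a semi-infinite block Toeplitz operator. Write $t:=\inf_{\theta\in[0,2\pi]}\{r(F(\theta))\}>0$. By Lemma \ref{secondo}(2) we have $r(\mathsf{T}_N)\ge t$ for every $N$, so each $\mathsf{T}_N$ is invertible with $\|\mathsf{T}_N^{-1}\|\le 1/t$ and $\mathsf{S}_N$ is well-defined. Since $F(\theta)\succeq tI$, the matrix $F^{-1}(\theta)$ is positive-definite for every $\theta$, hence $\Phi(0)=\frac{1}{2\pi}\int_0^{2\pi}F^{-1}(\theta)\,d\theta\succ 0$; moreover $\{\Phi(n)\}_{n\in\mathbb{Z}}$ is square-summable and $\Phi(n)\to 0$ as $|n|\to\infty$, $F^{-1}$ being continuous on the unit circle. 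Comparing Fourier coefficients in $F(\theta)F^{-1}(\theta)=I$ gives the block recursion $E^\dagger\Phi(n-1)+D\Phi(n)+E\Phi(n+1)=\delta_{n,0}I$ for all $n\in\mathbb{Z}$, which at $n=0$ reads $D\Phi(0)+E^\dagger\Phi(-1)=I-E\Phi(1)=H$. Finally, from (\ref{QS}) the blocks of $\mathsf{S}_N$ are $A-E^\dagger(\mathsf{T}_N^{-1})_{1,1}E$, $-E^\dagger(\mathsf{T}_N^{-1})_{1,N}E^\dagger$, $-E(\mathsf{T}_N^{-1})_{N,1}E$, and $B-E(\mathsf{T}_N^{-1})_{N,N}E^\dagger$, so everything reduces to controlling these four blocks.

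For the invertibility statement in part (1), I introduce the semi-infinite block Toeplitz operator $\mathsf{T}_{+}$ on $\ell^2(\{1,2,\dots\},\Cm^d)$ with diagonal blocks $D$, superdiagonal blocks $E^\dagger$, and subdiagonal blocks $E$. Repeating for finitely supported $x$ the Parseval computation behind (\ref{TFrel}) gives $\langle x,\mathsf{T}_{+}x\rangle=\frac{1}{2\pi}\int_0^{2\pi}\langle\hat x(\theta),F(\theta)\hat x(\theta)\rangle\,d\theta\ge t\|x\|^2$ with $\hat x(\theta):=\sum_{n\ge 1}x_ne^{-\mathrm{i}n\theta}$, so $\mathsf{T}_{+}\succeq tI$ on all of $\ell^2$. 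If $Hv=0$ for some $v\ne 0$, set $w_n:=\Phi(1-n)v$; the block recursion, used at index $1-n\le -1$ for $n\ge 2$ (where its right-hand side vanishes) and at index $0$ for $n=1$, gives $(\mathsf{T}_{+}w)_n=0$ for $n\ge 2$ and $(\mathsf{T}_{+}w)_1=[D\Phi(0)+E^\dagger\Phi(-1)]v=Hv=0$, hence $\mathsf{T}_{+}w=0$. But $w\in\ell^2$ and $w_1=\Phi(0)v\ne 0$ since $\Phi(0)\succ 0$, contradicting $\mathsf{T}_{+}\succeq tI$; thus $H$ is invertible. The reversal permutation $n\mapsto N+1-n$ turns $\mathsf{T}_N$ into the HQT bulk matrix with $E$ and $E^\dagger$ interchanged, whose symbol is $F(-\theta)$ and whose Fourier coefficients are $\Phi(-n)$; the role of $H$ for this reflected matrix is played by $I-E^\dagger\Phi(-1)=K^\dagger$, so the argument just given shows $K^\dagger$, hence $K$, is invertible.

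The heart of the proof is part (2). Put $Y_n:=\Phi(1-n)H^{-1}$ for $n\ge 1$; by the computation above $(\mathsf{T}_{+}Y)_n=\delta_{n,1}I$ and $Y\in\ell^2$. Let $Y^{(N)}\in\mathsf{BL}_{N,1}$ solve $\mathsf{T}_NY^{(N)}=\mathsf{C}$, so $Y^{(N)}_n=(\mathsf{T}_N^{-1})_{n,1}$, and let $\widehat Y:=(Y_1;\dots;Y_N)$. Since $\mathsf{T}_N$ and $\mathsf{T}_{+}$ agree on the first $N-1$ block rows when applied to vectors supported in $\{1,\dots,N\}$, we get $(\mathsf{T}_N\widehat Y)_n=\delta_{n,1}I$ for $n\le N-1$ and $(\mathsf{T}_N\widehat Y)_N=(\mathsf{T}_{+}Y)_N-E^\dagger Y_{N+1}=-E^\dagger\Phi(-N)H^{-1}$; hence $\mathsf{T}_N(\widehat Y-Y^{(N)})$ has only its last block non-zero, and $\|\widehat Y-Y^{(N)}\|\le t^{-1}\|E\|\,\|H^{-1}\|\,\|\Phi(-N)\|\to 0$. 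Therefore $(\mathsf{T}_N^{-1})_{1,1}=Y^{(N)}_1\to\Phi(0)H^{-1}$ and $(\mathsf{T}_N^{-1})_{N,1}=Y^{(N)}_N\to 0$ (the latter because $\|Y_N\|\le\|\Phi(1-N)\|\,\|H^{-1}\|\to 0$). Applying the same reasoning to the reflected matrix gives $(\mathsf{T}_N^{-1})_{N,N}\to\Phi(0)(K^\dagger)^{-1}$ and $(\mathsf{T}_N^{-1})_{1,N}\to 0$, and $\Phi(0)(K^\dagger)^{-1}=K^{-1}\Phi(0)$ because this limit is Hermitian (a limit of principal submatrices of the Hermitian $\mathsf{T}_N^{-1}$) and $\Phi(0)=\Phi(0)^\dagger$. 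Substituting the four limits into (\ref{QS}) yields $\lim_{N\uparrow\infty}\mathsf{S}_N=\begin{psmallmatrix}\mathcal{L}&0\\0&\mathcal{R}\end{psmallmatrix}$ with $\mathcal{L}=A-E^\dagger\Phi(0)H^{-1}E$ and $\mathcal{R}=B-EK^{-1}\Phi(0)E^\dagger$, both Hermitian as limits of the Hermitian diagonal blocks of $\mathsf{S}_N$.

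I expect the main obstacle to be exactly this identification, hand in hand with the invertibility of $H$ and $K$: the key idea is to recognize $\{\Phi(1-n)\}_{n\ge 1}$ (and its reflection) as the $\ell^2$ fundamental solution of the homogeneous block recursion $Ez_{n-1}+Dz_n+E^\dagger z_{n+1}=0$, which simultaneously supplies the kernel vector excluding non-invertibility of $H$ and the decaying profile against which the finite-section solutions $\mathsf{T}_N^{-1}\mathsf{C}$ are compared. A secondary technical point is the bookkeeping that reconciles the reflected operator, which naturally produces $\Phi(0)(K^\dagger)^{-1}$, with the stated formula $\mathcal{R}=B-EK^{-1}\Phi(0)E^\dagger$, using $\Phi(-1)=\Phi(1)^\dagger$ and Hermiticity of the corner blocks.
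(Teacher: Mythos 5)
Your proof is correct, and it reaches the lemma by a route that differs from the paper's in mechanism even though it shares the same core objects (the Fourier recursion $E\Phi(n+1)+D\Phi(n)+E^\dagger\Phi(n-1)=\delta_{n,0}I$, the identity $D\Phi(0)+E^\dagger\Phi(-1)=H$, the uniform bound $r(\mathsf{T}_N)\ge t$, and the Riemann--Lebesgue decay of $\Phi(\pm N)$). The paper works entirely with finite sections: for part 1 it runs the same $\Phi(1-n)u$ construction but inside $\mathsf{T}_N$, so a boundary term $-E^\dagger\Phi(-N)u$ survives and the contradiction comes from letting $N\uparrow\infty$ in the inequality $\langle z,\mathsf{T}_Nz\rangle\ge t\|z_1\|^2$; it then gets $K$ from $H$ via the algebraic intertwining $HE=EK$ rather than your reflection argument. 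For part 2 the paper solves the finite-$N$ two-point boundary value problem for the columns $\mathsf{T}_N^{-1}\mathsf{C}$ and rows $\mathsf{R}\mathsf{T}_N^{-1}$ exactly, which couples the two endpoint blocks through the $2\times 2$ block matrix $\mathsf{Z}=\begin{psmallmatrix} H^\dagger & -\Phi(N)E\\ -\Phi(-N)E^\dagger & K\end{psmallmatrix}$, invertible for large $N$; the limits $C_1\to\Phi(0)H^{-1}$, $R_N\to K^{-1}\Phi(0)$, $C_N,R_1\to 0$ then drop out of $\mathsf{Z}^{-1}$ and Riemann--Lebesgue, and the form $K^{-1}\Phi(0)$ appears directly without your Hermiticity fix-up. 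You instead introduce the semi-infinite Toeplitz operator $\mathsf{T}_+\succeq tI$, exhibit the explicit $\ell^2$ solution $Y_n=\Phi(1-n)H^{-1}$, and control the finite sections by the perturbation bound $\|\widehat Y-Y^{(N)}\|\le t^{-1}\|E\|\,\|H^{-1}\|\,\|\Phi(-N)\|$, with a reflection handling the $(N,N)$ corner and the invertibility of $K^\dagger$. What your approach buys is a cleaner conceptual picture (finite-section convergence to the inverse of the semi-infinite operator, one estimate instead of an exact solve) and it decouples the two boundaries, at the mild cost of the density/boundedness step needed to extend $\mathsf{T}_+\succeq tI$ from finitely supported vectors to all of $\ell^2$, the Parseval argument for $\{\Phi(n)\}\in\ell^2$, and the Hermiticity argument identifying $\Phi(0)(K^\dagger)^{-1}$ with $K^{-1}\Phi(0)$ --- all routine and correctly indicated in your write-up; the paper's exact solve, by contrast, automatically produces both corner limits and the stated form of $\mathcal{R}$, but requires checking invertibility of $\mathsf{Z}$ for large $N$.
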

We call $\mathsf{S}_\infty$ the \textit{limit boundary matrix} of the
HQT matrix sequence $\Qseq$. Putting the pieces together in the
following proposition, we finally solve the positive definiteness and the
determinants of the matrices $\mathsf{Q}_N$ in the large $N$ limit.
\begin{proposition}
  \label{detQ}
Let $\Qseq$ be a HQT matrix sequence with symbol $F$ and limit
boundary matrix $\mathsf{S}_\infty=\begin{psmallmatrix}\mathcal{L} & 0
\\ 0 & \mathcal{R} \end{psmallmatrix}$. Assume that
$\inf_{\theta\in[0,2\pi]}\{r(F(\theta))\}>0$, $\mathcal{L}\succ 0$,
and $\mathcal{R}\succ 0$. Then, $\mathsf{Q}_N\succ\mathsf{0}$ for all
sufficiently large $N$ and
    \begin{equation*}
    \lim_{N\uparrow\infty}\frac{1}{N}\ln\det \mathsf{Q}_N=\frac{1}{2\pi}\int_0^{2\pi}\ln\det F(\theta) \,d\theta.
    \end{equation*}
\end{proposition}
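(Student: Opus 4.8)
The plan is to combine Lemmas \ref{primo}, \ref{secondo}, and \ref{lem:terzo}, which together reduce the proposition to an elementary continuity argument. First I would put $t:=\inf_{\theta\in[0,2\pi]}\{r(F(\theta))\}$, which is strictly positive by hypothesis. Part 2 of Lemma \ref{secondo} then gives $r(\mathsf{T}_N)\ge t>0$ for every $N\ge 1$, so the bulk matrices $\mathsf{T}_N$ are all positive-definite, hence invertible, and the boundary matrices $\mathsf{S}_N$ of (\ref{QS}) are well-defined for every $N$.

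Next I would treat the boundary matrices. Since $\mathcal{L}\succ 0$ and $\mathcal{R}\succ 0$, the block-diagonal limit boundary matrix satisfies $r(\mathsf{S}_\infty)=\min\{r(\mathcal{L}),r(\mathcal{R})\}>0$. By part 2 of Lemma \ref{lem:terzo} we have $\mathsf{S}_N\to\mathsf{S}_\infty$ as $N\uparrow\infty$, and because the smallest eigenvalue of a Hermitian matrix is a continuous function of its entries, $r(\mathsf{S}_N)\to r(\mathsf{S}_\infty)>0$. Thus there is an integer $N_o$ such that $\mathsf{S}_N\succ 0$ for all $N\ge N_o$. For such $N$, part 2 of Lemma \ref{primo} applies: $\mathsf{T}_N\succ 0$ together with $\mathsf{S}_N\succ 0$ forces $\mathsf{Q}_N\succ 0$, which is the first claim, and it also yields the factorization $\ln\det\mathsf{Q}_N=\ln\det\mathsf{T}_N+\ln\det\mathsf{S}_N$.

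It then only remains to divide this identity by $N$ and let $N\uparrow\infty$. The term $(1/N)\ln\det\mathsf{T}_N$ converges to $(1/2\pi)\int_0^{2\pi}\ln\det F(\theta)\,d\theta$ by part 3 of Lemma \ref{secondo}, again thanks to $\inf_{\theta\in[0,2\pi]}\{r(F(\theta))\}>0$. For the remaining term, continuity of the determinant gives $\det\mathsf{S}_N\to\det\mathsf{S}_\infty=\det\mathcal{L}\cdot\det\mathcal{R}>0$, so $\ln\det\mathsf{S}_N$ converges to the finite real number $\ln\det\mathsf{S}_\infty$, whence $(1/N)\ln\det\mathsf{S}_N\to 0$. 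Adding the two contributions gives $\lim_{N\uparrow\infty}(1/N)\ln\det\mathsf{Q}_N=(1/2\pi)\int_0^{2\pi}\ln\det F(\theta)\,d\theta$.

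Since all the ingredients are already established, there is no serious obstacle; the only point that calls for care is showing that the boundary correction $\ln\det\mathsf{S}_N$ stays bounded (equivalently, that $\mathsf{S}_\infty$ is invertible), so that it vanishes after the $1/N$ normalization. This is precisely where the hypotheses $\mathcal{L}\succ 0$ and $\mathcal{R}\succ 0$ --- rather than mere positive semidefiniteness --- enter, and it is also the reason positivity of $\mathsf{Q}_N$ can only be claimed for large $N$ and not for every $N$.
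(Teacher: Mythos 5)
Your proof is correct and follows essentially the same route as the paper's: positivity of the bulk matrices and the Szeg\H{o} limit from Lemma \ref{secondo}, convergence $\mathsf{S}_N\to\mathsf{S}_\infty\succ 0$ from Lemma \ref{lem:terzo}, and the factorization $\ln\det\mathsf{Q}_N=\ln\det\mathsf{T}_N+\ln\det\mathsf{S}_N$ from Lemma \ref{primo}. You are merely a bit more explicit than the paper in spelling out why $(1/N)\ln\det\mathsf{S}_N\to 0$, which is a fine addition but not a different argument.
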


\begin{proof}
According to Lemma \ref{secondo}, the hypothesis
$\inf_{\theta\in[0,2\pi]}\{r(F(\theta))\}>0$ gives
$\mathsf{T}_N\succ\mathsf{0}$ for all $N\ge 1$ and
 \begin{equation*}
    \lim_{N\uparrow\infty}\frac{1}{N}\ln\det \mathsf{T}_N=\frac{1}{2\pi}\int_0^{2\pi}\ln\det F(\theta) \,d\theta.
 \end{equation*}
It also shows that
$\lim_{N\uparrow\infty}\mathsf{S}_N=\mathsf{S}_\infty$ exists by Lemma
\ref{lem:terzo}. Since $\mathsf{S}_\infty\succ \mathsf{0}$ by
hypothesis, the boundary matrices $\mathsf{S}_N$ are positive definite
for all sufficiently large $N$. It follows by Lemma \ref{primo} that
$\mathsf{Q}_N\succ\mathsf{0}$ for all sufficiently large $N$ and that
\begin{equation*}
  \lim_{N\uparrow\infty}\frac{1}{N}\ln\det \mathsf{Q}_N=\lim_{N\uparrow\infty}\frac{1}{N}\ln\det \mathsf{T}_N+\lim_{N\uparrow\infty}\frac{1}{N}\ln\det \mathsf{S}_N
  =\frac{1}{2\pi}\int_0^{2\pi}\ln\det F(\theta) \,d\theta.
\qedhere
\end{equation*}
\end{proof}

\subsection{The cumulant generating function of $W_N$}
\label{sec:cum}

Let us move to the stable Gauss-Markov chain $\X$ and the quadratic
functional $W_N$.  According to (\ref{evol_eq}), for each $N\ge 1$ the
law of $(X_1,\ldots,X_{N+2})$ is the multivariate Gaussian
distribution that at $x=(x_1,\ldots,x_{N+2})\in(\Rl^d)^{N+2}$ has
probability density
\begin{equation*}
  \frac{e^{-\frac{1}{2}\langle x, \Sigma_N^{-1} x\rangle}}{\sqrt{(2\pi)^{(N+2)d}\det\Sigma_N}}
  =\frac{e^{-\frac{1}{2}\langle x_1, \Sigma_o^{-1} x_1\rangle}}{\sqrt{(2\pi)^d\det\Sigma_o}}\prod_{n=2}^{N+2}\frac{1}{\sqrt{(2\pi)^d}}\,e^{-\frac{1}{2}\|x_n-Sx_{n-1}\|^2}.
\end{equation*}
We see that the inverse $\Sigma_N^{-1}$ of the covariance matrix
$\Sigma_N$ of $(X_1,\ldots,X_{N+2})$ is the real symmetric block
tridiagonal matrix in $\mathsf{BL}_{N+2,N+2}$ given by
\begin{equation*}
  \Sigma_N^{-1}:=\begin{pmatrix}
  \Sigma_o^{-1}+S^\top S & -S^\top &  & & \\
    -S & I+S^\top S & \ddots &  &\\
      & \ddots & \ddots & \ddots  & \\
      &  & \ddots & I+S^\top S & -S^\top\\
      &  &  & -S & I
  \end{pmatrix},
\end{equation*}
and we have $\det\Sigma_N=\det\Sigma_o$. Together with
$\Sigma_N^{-1}$, we introduce the real symmetric block tridiagonal
matrix $\mathsf{M}_N\in\mathsf{BL}_{N+2,N+2}$ defined by
\begin{equation*}
  \mathsf{M}_N:=
  \begin{pmatrix}
  L & V^\top &  & & \\
  V & U & \ddots & &\\
      & \ddots & \ddots & \ddots  & \\
      & & \ddots & U & V^\top\\
      &  &  & V & R
  \end{pmatrix}.
\end{equation*}
The matrix $\mathsf{M}_N$ allows us to express the quadratic
functional $W_{N+2}$ as $(1/2)\langle X,\mathsf{M}_NX\rangle$ with
$X:=(X_1,\ldots,X_{N+2})$. The \textit{cumulant generating function} of
$W_N$ is the function that maps $\lambda\in\Rl$ in
$(1/N)\ln\Ex[e^{\lambda W_N}]$.  We start with the following
elementary result involving Gaussian integrals.
\begin{lemma}
  \label{lem:Gaussint}
   For each $N\ge 1$ and $\lambda\in\Rl$
\begin{equation*}
\ln\Ex\big[e^{\lambda W_{N+2}}\big]=
\begin{cases}
  -\frac{1}{2}\ln\det\Sigma_o-\frac{1}{2}\ln\det\big(\Sigma_N^{-1}-\lambda\mathsf{M}_N\big) & \mbox{if}~\Sigma_N^{-1}-\lambda\mathsf{M}_N\succ \mathsf{0},\\
  +\infty & \mbox{otherwise}.
\end{cases}
\end{equation*}
\end{lemma}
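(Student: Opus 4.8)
The plan is to compute $\Ex[e^{\lambda W_{N+2}}]$ directly as a Gaussian integral over $(\Rl^d)^{N+2}\cong\Rl^m$ with $m:=(N+2)d$. Writing $W_{N+2}=(1/2)\langle X,\mathsf{M}_NX\rangle$ and integrating the function $x\mapsto e^{\lambda(1/2)\langle x,\mathsf{M}_Nx\rangle}$ against the density of $(X_1,\ldots,X_{N+2})$ displayed above, and recalling that $\det\Sigma_N=\det\Sigma_o$, one gets
\begin{equation*}
\Ex\big[e^{\lambda W_{N+2}}\big]=\frac{1}{\sqrt{(2\pi)^m\det\Sigma_o}}\int_{\Rl^m}e^{-\frac12\langle x,\mathsf{A}_\lambda x\rangle}\,dx,\qquad \mathsf{A}_\lambda:=\Sigma_N^{-1}-\lambda\mathsf{M}_N,
\end{equation*}
the integrand being nonnegative, so that both the expectation and the integral are well defined with values in $[0,+\infty]$.

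The second step is to evaluate this integral by diagonalizing the real symmetric matrix $\mathsf{A}_\lambda$ (real symmetry holds because $\Sigma_N^{-1}$ is symmetric and $\mathsf{M}_N$ is symmetric, the latter since $L$, $U$, and $R$ are symmetric). Choose an orthogonal $O$ with $O^\top\mathsf{A}_\lambda O=\mathrm{diag}(\mu_1,\ldots,\mu_m)$, $\mu_i\in\Rl$; the substitution $y=O^\top x$ has unit Jacobian and Tonelli's theorem turns the integral into $\prod_{i=1}^m\int_\Rl e^{-\mu_iy^2/2}\,dy$. If $\mathsf{A}_\lambda\succ 0$, then every $\mu_i>0$, each factor equals $\sqrt{2\pi/\mu_i}$, their product is $\sqrt{(2\pi)^m/\det\mathsf{A}_\lambda}$, and substituting back gives $\ln\Ex[e^{\lambda W_{N+2}}]=-\frac12\ln\det\Sigma_o-\frac12\ln\det\mathsf{A}_\lambda$, which is the first branch. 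If instead $\mathsf{A}_\lambda\not\succ 0$, then some $\mu_j\le 0$, the corresponding factor $\int_\Rl e^{-\mu_jy^2/2}\,dy$ equals $+\infty$ while all other factors are strictly positive, so the product, hence $\Ex[e^{\lambda W_{N+2}}]$, equals $+\infty$, which is the second branch.

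This argument is essentially routine and I do not expect a genuine obstacle. The only point that needs a little care is the divergent branch, where one must argue that the integral truly equals $+\infty$ rather than merely failing to reduce to a finite Gaussian value; the factorization above makes this transparent. As a consistency check one may note that $\Sigma_N^{-1}\succ 0$ because $\Sigma_o\succ 0$, so for $\lambda=0$ the matrix $\mathsf{A}_0=\Sigma_N^{-1}$ is positive-definite and the formula correctly returns $\ln\Ex[1]=0=-\frac12\ln\det\Sigma_o-\frac12\ln\det\Sigma_N^{-1}$, using $\det\Sigma_N^{-1}=1/\det\Sigma_o$.
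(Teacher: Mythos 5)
Your proof is correct, and it is exactly the standard Gaussian-integral computation (density of $(X_1,\ldots,X_{N+2})$, the identity $\det\Sigma_N=\det\Sigma_o$, orthogonal diagonalization of the symmetric matrix $\Sigma_N^{-1}-\lambda\mathsf{M}_N$, and Tonelli to handle the divergent branch) that the paper leaves unwritten, stating the lemma as an elementary result; the same diagonalization device reappears in the paper's later eigenvalue formula for $\ln\Ex[e^{\lambda W_{N+2}}]$. No gaps: the treatment of the non-positive-definite case and the well-definedness of the expectation in $[0,+\infty]$ are handled properly.
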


We aim to investigate the asymptotics of the cumulant generating
function. According to Definition \ref{defQ}, the matrices
$\mathsf{Q}_N:=\Sigma_N^{-1}-\lambda\mathsf{M}_N\in\mathsf{BL}_{N+2,N+2}$
with some $\lambda\in\Rl$ form a HQT matrix sequence. Explicitly, we
have $A:=\Sigma_o^{-1}+S^\top S-\lambda L$, $D:=I+S^\top S-\lambda U$,
$B:=I-\lambda R$, and $E:=-S-\lambda V$.  The symbol $F_\lambda$ of
this HQT matrix sequence reads
\begin{align}
  \nonumber
  F_\lambda(\theta)&:=-\big(S+\lambda V\big)e^{-\mathrm{i}\theta}+I+S^\top S-\lambda U-\big(S^\top+\lambda V^\top\big)e^{\mathrm{i}\theta}\\
  \nonumber
  &=\big(I-S^\top e^{\mathrm{i}\theta}\big)\big(I-Se^{-\mathrm{i}\theta}\big)-\lambda\big(U+V e^{-\mathrm{i}\theta}+V^\top e^{\mathrm{i}\theta}\big)
\end{align}
for every $\theta\in[0,2\pi]$. It is exactly the matrix
(\ref{def:symbol}). According to (\ref{def:infr}), the real number
$f_\lambda$ is related to Rayleigh quotients of the symbol $F_\lambda$
by $\inf_{\theta\in[0,2\pi]}\{r(F_\lambda(\theta))\}=f_\lambda$. Lemma
\ref{lem:terzo} proves the technical Lemma \ref{lem:tech}, and the
matrices $\mathcal{L}_\lambda$ and $\mathcal{R}_\lambda$ defined by
Lemma \ref{lem:tech} enter the limit boundary matrix of $\Qseq$:
$\mathsf{S}_\infty=\begin{psmallmatrix}\mathcal{L}_\lambda & 0 \\ 0 &
\mathcal{R}_\lambda \end{psmallmatrix}$. By combining Lemma
\ref{lem:Gaussint} with Proposition \ref{detQ} we get that if
$f_\lambda>0$, $\mathcal{L}_\lambda\succ 0$, and
$\mathcal{R}_\lambda\succ 0$, then
\begin{equation}
\lim_{N\uparrow\infty}\frac{1}{N}\ln\Ex\big[e^{\lambda W_N}\big]=-\frac{1}{4\pi}\int_0^{2\pi}\ln\det F_\lambda(\theta) \,d\theta=:\varphi(\lambda),
\label{limCGF}
\end{equation}
$\varphi(\lambda)$ being the integral already defined in
(\ref{def:phi}). We want to prove here that the set
\begin{equation}
\Lambda:=\Big\{\lambda\in\Rl~:~f_\lambda>0,~\mathcal{L}_\lambda\succ 0, \mbox{ and }\mathcal{R}_\lambda\succ 0\Big\}
\label{def:Lambda_set}
\end{equation}
is an interval. Formulas (\ref{def:lambdam}) and (\ref{def:lambdap})
states that $\lambda_-=\inf\{\Lambda\}$ and
$\lambda_+=\sup\{\Lambda\}$. To begin with, we need the following
bound for $r(\Sigma_N^{-1})$, which is based on the hypothesis that
the spectral radius $\rho(S)$ of $S$ is smaller than 1 and is proved
in Appendix \ref{proof:sigma}.
\begin{lemma}
\label{lem:sigma}
There exists a real number $\sigma>0$ such that $r(\Sigma_N^{-1})\ge \sigma$ for all $N\ge 1$.
\end{lemma}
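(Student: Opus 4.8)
The plan is to deduce the lower bound on $r(\Sigma_N^{-1})$ from a uniform upper bound on $\|\Sigma_N\|$: since $\Sigma_N^{-1}$ is symmetric and positive-definite, $r(\Sigma_N^{-1})$ is the reciprocal of the largest eigenvalue of $\Sigma_N$, i.e.\ $r(\Sigma_N^{-1})=\|\Sigma_N\|^{-1}$, so it suffices to show $\sup_{N\ge 1}\|\Sigma_N\|<\infty$. The route I would take is a Cholesky-type factorization of $\Sigma_N^{-1}$ that exposes the block-bidiagonal structure already visible in the probability density of $(X_1,\dots,X_{N+2})$. Writing $\Sigma_o^{-1}=A^\top A$ with $A\in\Rl^{d\times d}$ invertible, the identity $\langle x,\Sigma_N^{-1}x\rangle=\|Ax_1\|^2+\sum_{n=2}^{N+2}\|x_n-Sx_{n-1}\|^2$ gives $\Sigma_N^{-1}=\mathsf{B}_N^\top\mathsf{B}_N$, where $\mathsf{B}_N\in\mathsf{BL}_{N+2,N+2}$ is the block lower-bidiagonal matrix with $A$ in its top-left block, $I$ along the remaining diagonal, and $-S$ along the subdiagonal. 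Since $A$ and $I$ are invertible, $\mathsf{B}_N$ is invertible and $r(\Sigma_N^{-1})=\|\mathsf{B}_N^{-1}\|^{-2}$, so everything reduces to bounding $\|\mathsf{B}_N^{-1}\|$ uniformly in $N$.

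Next I would compute $\mathsf{B}_N^{-1}$ explicitly by forward substitution: solving $\mathsf{B}_Ny=z$ gives $y_1=A^{-1}z_1$ and $y_n=z_n+Sy_{n-1}$ for $n\ge 2$, so the $(n,j)$ block of $\mathsf{B}_N^{-1}$ equals $S^{n-1}A^{-1}$ if $j=1$, equals $S^{n-j}$ if $2\le j\le n$, and vanishes if $j>n$. The stability hypothesis $\rho(S)<1$ furnishes constants $c\ge 1$ and $r\in(0,1)$ with $\|S^k\|\le c\,r^k$ for every $k\ge 0$. Putting $w_1:=A^{-1}z_1$ and $w_j:=z_j$ for $j\ge 2$, one has $(\mathsf{B}_N^{-1}z)_n=\sum_{j=1}^n S^{n-j}w_j$ and hence $\|(\mathsf{B}_N^{-1}z)_n\|\le\sum_{j=1}^n c\,r^{n-j}\|w_j\|$. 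The right-hand side is the convolution of the $\ell^1$ kernel $(c\,r^k)_{k\ge 0}$ with $(\|w_j\|)_j$, so Young's inequality yields $\|\mathsf{B}_N^{-1}z\|\le\tfrac{c}{1-r}\big(\sum_j\|w_j\|^2\big)^{1/2}\le\tfrac{c}{1-r}\max\{\|A^{-1}\|,1\}\,\|z\|$. Therefore $\|\mathsf{B}_N^{-1}\|\le C:=\tfrac{c}{1-r}\max\{\|A^{-1}\|,1\}$ for all $N\ge 1$, and the lemma holds with $\sigma:=C^{-2}>0$.

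There is no genuine obstacle here; the one substantive point is that stability ($\rho(S)<1$) is exactly what makes the convolution kernel $(\|S^k\|)_{k\ge 0}$ summable with an $N$-independent bound, which is what propagates through to a uniform control of $\|\mathsf{B}_N^{-1}\|$. As an alternative I would mention bounding $\|\Sigma_N\|$ directly via the representation $X_n=S^{n-1}X_1+\sum_{k=0}^{n-2}S^kG_{n-1-k}$, which makes $\|\mathrm{Cov}(X_m,X_n)\|$ decay geometrically in $|m-n|$ and then gives a uniform bound on $\|\Sigma_N\|$ through a Schur-test estimate of the block row sums; but the factorization argument above is preferable since it never needs to compare $\Sigma_o$ with the stationary covariance $\Sigma_s$.
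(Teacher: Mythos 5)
Your proof is correct and is essentially the paper's argument in a different dress: the paper bounds the Rayleigh quotient of $\Sigma_N^{-1}$ directly, substituting $z_n=\sum_{k=1}^n S^{n-k}\zeta_k$ --- which is exactly your forward-substitution formula for $\mathsf{B}_N^{-1}$ --- and controls the resulting triple sum by Cauchy--Schwarz instead of Young's convolution inequality, handling the initial block through the factor $1\wedge r(\Sigma_o^{-1})$ rather than through a Cholesky factor of $\Sigma_o^{-1}$. Both routes come down to the same mechanism, the $N$-uniform summability of $\|S^k\|\le c\,s^k$ furnished by $\rho(S)<1$, so the two proofs coincide in substance.
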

The following lemma shows that $\Lambda$ is a convex set, and hence it
is an interval.
\begin{lemma}
  \label{lem:lambdaset}
  The following limits exist and are finite:
  \begin{equation*}
    \adjustlimits\lim_{\substack{N\uparrow\infty\\ \phantom{x}}}\inf_{\substack{z\in(\Cm^d)^{N+2}\\ z\neq 0}}
    \bigg\{\frac{\langle z,\mathsf{M}_Nz\rangle}{\langle z,\Sigma_N^{-1}z\rangle}\bigg\}=:\xi_-.
  \end{equation*}
  and
  \begin{equation*}
    \adjustlimits\lim_{\substack{N\uparrow\infty\\ \phantom{x}}}\sup_{\substack{z\in(\Cm^d)^{N+2}\\ z\neq 0}}
    \bigg\{\frac{\langle z,\mathsf{M}_Nz\rangle}{\langle z,\Sigma_N^{-1}z\rangle}\bigg\}=:\xi_+.
  \end{equation*}
  If $\lambda\in\Lambda$, then $\lambda\xi_-\le 1$ and
  $\lambda\xi_+\le 1$.  If $\lambda\in\Rl$ is such that
  $\lambda\xi_-<1$ and $\lambda\xi_+<1$, then $\lambda\in\Lambda$.
\end{lemma}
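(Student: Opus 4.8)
The plan is to work throughout with the extremal generalized eigenvalues $m_N^-:=\inf_{z\neq 0}\langle z,\mathsf{M}_Nz\rangle/\langle z,\Sigma_N^{-1}z\rangle$ and $m_N^+:=\sup_{z\neq 0}\langle z,\mathsf{M}_Nz\rangle/\langle z,\Sigma_N^{-1}z\rangle$, whose limits are $\xi_-$ and $\xi_+$. Two uniform bounds come for free: $\{\mathsf{M}_N\}$ is itself an HQT matrix sequence (with $A:=L$, $D:=U$, $B:=R$, $E:=V$ in Definition \ref{defQ}), so Lemma \ref{lem:norm} gives $\|\mathsf{M}_N\|\le C$ for a constant $C$, while Lemma \ref{lem:sigma} gives $\langle z,\Sigma_N^{-1}z\rangle\ge\sigma\|z\|^2$ with $\sigma>0$; hence $m_N^\pm\in[-C/\sigma,C/\sigma]$ for every $N$, so the two sequences are bounded and any limit they possess is finite. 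Moreover, since $\langle z,\Sigma_N^{-1}z\rangle>0$, for every $c\in\Rl$ the matrix $c\Sigma_N^{-1}-\mathsf{M}_N$ is positive definite if and only if $m_N^+<c$, and likewise $\Sigma_N^{-1}-\lambda\mathsf{M}_N\succ 0$ if and only if $\lambda m_N^-<1$ and $\lambda m_N^+<1$.

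Granting for the moment that $\xi_\pm:=\lim_N m_N^\pm$ exist, the two inclusions follow from Section \ref{sec:QT}. If $\lambda\in\Lambda$, the HQT sequence $\mathsf{Q}_N:=\Sigma_N^{-1}-\lambda\mathsf{M}_N$ has symbol $F_\lambda$ with $\inf_\theta\{r(F_\lambda(\theta))\}=f_\lambda>0$ and limit boundary matrix block-diagonal with blocks $\mathcal{L}_\lambda\succ 0$ and $\mathcal{R}_\lambda\succ 0$, so Proposition \ref{detQ} gives $\mathsf{Q}_N\succ 0$ for all large $N$; by the last observation $\lambda m_N^\pm<1$ eventually, and letting $N\uparrow\infty$ yields $\lambda\xi_-\le 1$ and $\lambda\xi_+\le 1$. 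Conversely, suppose $\lambda\xi_-<1$ and $\lambda\xi_+<1$. Writing $\langle z,\mathsf{M}_Nz\rangle=t\langle z,\Sigma_N^{-1}z\rangle$ with $t\in[m_N^-,m_N^+]$ gives $\langle z,\mathsf{Q}_Nz\rangle=\langle z,\Sigma_N^{-1}z\rangle(1-\lambda t)\ge\langle z,\Sigma_N^{-1}z\rangle\min\{1-\lambda m_N^-,1-\lambda m_N^+\}$, and since $m_N^\pm\to\xi_\pm$ the last minimum converges to $\min\{1-\lambda\xi_-,1-\lambda\xi_+\}>0$, so for $N$ large it exceeds a constant $q_0>0$ and hence $r(\mathsf{Q}_N)\ge\sigma q_0=:q>0$. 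Lemma \ref{primo} then gives $r(\mathsf{T}_N)\ge q$ and $r(\mathsf{S}_N)\ge q$ for $N$ large; by Lemma \ref{secondo} the bulk bound propagates to all $N$ and forces $f_\lambda\ge q>0$, so Lemma \ref{lem:terzo} applies, $\mathsf{S}_N$ converges to the block-diagonal matrix with blocks $\mathcal{L}_\lambda$ and $\mathcal{R}_\lambda$, and passing $r(\mathsf{S}_N)\ge q$ to the limit gives $\mathcal{L}_\lambda\succ 0$ and $\mathcal{R}_\lambda\succ 0$, i.e.\ $\lambda\in\Lambda$.

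The crux is the existence of $\lim_N m_N^\pm$. Fix $c\in\Rl$ and consider the HQT matrix sequence $\mathsf{P}_N^{(c)}:=c\Sigma_N^{-1}-\mathsf{M}_N$, whose symbol is $c\Psi(\theta)-\Upsilon(\theta)$ with $\Psi(\theta):=(I-S^\top e^{\mathrm{i}\theta})(I-Se^{-\mathrm{i}\theta})$ and $\Upsilon(\theta):=U+Ve^{-\mathrm{i}\theta}+V^\top e^{\mathrm{i}\theta}$, hence is affine in $c$; by the first paragraph, $m_N^+<c$ holds precisely when $\mathsf{P}_N^{(c)}\succ 0$. I claim that, apart from finitely many $c$, whether $\mathsf{P}_N^{(c)}\succ 0$ holds is eventually constant in $N$. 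Its bulk matrix $\mathsf{T}_N^{(c)}$ is a principal submatrix of $\mathsf{T}_{N+1}^{(c)}$, so $r(\mathsf{T}_N^{(c)})$ is nonincreasing; if it ever becomes nonpositive then $\mathsf{P}_N^{(c)}\not\succ 0$ from then on (Lemma \ref{primo}), and otherwise $g(c):=\inf_\theta\{r(c\Psi(\theta)-\Upsilon(\theta))\}=\inf_N\{r(\mathsf{T}_N^{(c)})\}\ge 0$ by Lemma \ref{secondo}. Here $g$ is concave in $c$ (an infimum of functions affine in $c$), and because $\rho(S)<1$ makes $\Psi(\theta)\succeq n_0I$ uniformly for some $n_0>0$, one has $g(c)\ge cn_0-C>0$ for $c$ large; a concave function positive near $+\infty$ has a finite zero set, so $\{c:g(c)=0\}$ is finite. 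When $c\notin\{g=0\}$ and $r(\mathsf{T}_N^{(c)})>0$ for all $N$ we have $g(c)>0$, so Lemma \ref{lem:terzo} provides the limit boundary matrix $\mathsf{S}_\infty^{(c)}:=\lim_N\mathsf{S}_N^{(c)}$, and Lemma \ref{primo} (using $\mathsf{T}_N^{(c)}\succ 0$) gives $\mathsf{P}_N^{(c)}\succ 0$ if and only if $\mathsf{S}_N^{(c)}\succ 0$; since $r(\mathsf{S}_N^{(c)})\to r(\mathsf{S}_\infty^{(c)})$ by continuity, this is eventually constant unless $r(\mathsf{S}_\infty^{(c)})=0$. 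But the Schur‑complement representation of the boundary matrix — partial minimization of $\langle\,\cdot\,,\mathsf{P}_N^{(c)}\,\cdot\,\rangle$ over the bulk coordinates — shows $\langle v,\mathsf{S}_\infty^{(c)}v\rangle$ is concave in $c$ on $\{g>0\}$, while $\mathsf{P}_N^{(c)}\succeq(c\sigma-C)I$ gives, again via Lemma \ref{primo}, $r(\mathsf{S}_\infty^{(c)})\ge c\sigma-C>0$ for $c$ large; hence $\{c:g(c)>0,\ r(\mathsf{S}_\infty^{(c)})=0\}$ is finite too. Letting $\mathcal{E}$ be the union of these two finite sets, we conclude that for every $c\notin\mathcal{E}$ either $m_N^+<c$ for all large $N$ or $m_N^+\ge c$ for all large $N$; were $\liminf_N m_N^+<\limsup_N m_N^+$, every $c$ strictly between them would lie in $\mathcal{E}$, which is impossible. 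Hence $\lim_N m_N^+$ exists, and applying the same argument to the HQT sequence $\{-\mathsf{M}_N\}$ — for which $\sup_{z\neq 0}\langle z,-\mathsf{M}_Nz\rangle/\langle z,\Sigma_N^{-1}z\rangle=-m_N^-$ — yields $\lim_N m_N^-$.

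The single real obstacle is this last step, namely ruling out oscillation of the two extremal sequences; everything else is bookkeeping with Section \ref{sec:QT}. The delicate point is to control the borderline thresholds $c$ at which the symbol $c\Psi(\theta)-\Upsilon(\theta)$ or the limit boundary matrix $\mathsf{S}_\infty^{(c)}$ is merely positive semidefinite, and the two concavity facts used above — affineness of the symbol in $c$, and the representation of the boundary matrix as a partial minimization of a form affine in $c$ — are precisely what keeps those thresholds finite in number.
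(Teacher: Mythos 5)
Your proposal is correct, and its treatment of the two implications linking $\Lambda$ to $\xi_\pm$ coincides with the paper's (Proposition \ref{detQ} for $\lambda\in\Lambda\Rightarrow\lambda\xi_\pm\le 1$, and the chain Lemma \ref{primo}--Lemma \ref{secondo}--Lemma \ref{lem:terzo} for the converse). Where you genuinely diverge is the existence of the limits. The paper fixes $\lambda_o=\pm1$, sets $\xi_o$ equal to the $\liminf$ of $\sup_z\{\lambda_o\langle z,\mathsf{M}_Nz\rangle/\langle z,\Sigma_N^{-1}z\rangle\}$, and shows the $\limsup$ cannot exceed $\xi_o$: for $\xi=\xi_o+2\epsilon$ the matrices $\xi\Sigma_N^{-1}-\lambda_o\mathsf{M}_N$ have smallest eigenvalue at least $\epsilon\sigma$ along a subsequence, and this uniform positivity propagates to all large $N$ because $r(\mathsf{T}_N)$ is monotone in $N$ (part 1 of Lemma \ref{secondo}) while $\mathsf{S}_N$ converges (Lemma \ref{lem:terzo}); the strict margin $\epsilon\sigma$ makes borderline cases irrelevant. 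You instead scan thresholds $c$, show that the predicate ``$c\Sigma_N^{-1}-\mathsf{M}_N\succ 0$'' is eventually constant in $N$ for every $c$ outside a finite exceptional set, and finish with a cardinality argument; the finiteness of the exceptional set rests on two concavity-in-$c$ observations (for $\inf_\theta\{r(c\Psi(\theta)-\Upsilon(\theta))\}$, and, via the variational Schur-complement representation of the boundary matrix already implicit in Appendix \ref{proof:primo}, for $r(\mathsf{S}_\infty^{(c)})$ on $\{g>0\}$), combined with positivity at large $c$ coming from $\rho(S)<1$ and Lemma \ref{lem:sigma}; in fact concavity plus positivity near $+\infty$ gives at most one bad threshold of each kind. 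Both routes run on the same Section \ref{sec:QT} machinery; the paper's is shorter because proving $\limsup\le\liminf$ only requires the direction ``uniform positivity along a subsequence implies eventual positivity'', whereas your threshold dichotomy also needs the complementary direction and hence the extra concavity bookkeeping, in exchange for which it makes explicit that oscillation of the extremal generalized eigenvalues could only occur at the isolated values of $c$ where the symbol or the limit boundary matrix is exactly on the boundary of positive definiteness.
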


\begin{proof}
Fix a real number $\lambda_o$ and set
\begin{equation*}
  \adjustlimits\liminf_{\substack{N\uparrow\infty\\ \phantom{x}}}\sup_{\substack{z\in(\Cm^d)^{N+2}\\ z\neq 0}}
  \bigg\{\lambda_o\frac{\langle z,\mathsf{M}_Nz\rangle}{\langle z,\Sigma_N^{-1}z\rangle}\bigg\}=:\xi_o.
\end{equation*}
The limit $\xi_o$ is finite since $\|\mathsf{M}_N\|\le C$ for every
$N\ge 1$ with some constant $C<+\infty$ by Lemma \ref{lem:norm} and
$\langle z,\Sigma_N^{-1}z\rangle\ge \sigma\langle z,z\rangle$ and for
all $N\ge 1$ and $z\in(\Cm^d)^{N+2}$ by Lemma \ref{lem:sigma}. Let us show
that
\begin{equation}
  \adjustlimits\limsup_{\substack{N\uparrow\infty\\ \phantom{x}}}\sup_{\substack{z\in(\Cm^d)^{N+2}\\ z\neq 0}}
  \bigg\{\lambda_o\frac{\langle z,\mathsf{M}_Nz\rangle}{\langle z,\Sigma_N^{-1}z\rangle}\bigg\}\le\xi_o.
\label{prop11}
\end{equation}
By choosing $\lambda_o=-1$ and $\lambda_o=1$, this proves that the
following limits exist and are finite:
\begin{equation*}
    \adjustlimits\lim_{\substack{N\uparrow\infty\\ \phantom{x}}}\inf_{\substack{z\in(\Cm^d)^{N+2}\\ z\neq 0}}
    \bigg\{\frac{\langle z,\mathsf{M}_Nz\rangle}{\langle z,\Sigma_N^{-1}z\rangle}\bigg\}=:\xi_-.
  \end{equation*}
  and
  \begin{equation*}
    \adjustlimits\lim_{\substack{N\uparrow\infty\\ \phantom{x}}}\sup_{\substack{z\in(\Cm^d)^{N+2}\\ z\neq 0}}
    \bigg\{\frac{\langle z,\mathsf{M}_Nz\rangle}{\langle z,\Sigma_N^{-1}z\rangle}\bigg\}=:\xi_+.
  \end{equation*}

Pick an arbitrary real number $\xi>\xi_o$ and $\epsilon>0$ such that
$\xi_o+2\epsilon\le\xi$. Consider the HQT matrix sequence $\Qseq$ with
$\mathsf{Q}_N:=\xi\Sigma_N^{-1}-\lambda_o\mathsf{M}_N\in\mathsf{BL}_{N+2,N+2}$
for all $N\ge 1$. By definition of $\xi_o$, there exists a diverging
sequence $\{N_k\}_{k\ge 0}$ of positive integers with the property
that for all $k$ and $z\in(\Cm^d)^{N_k+2}$
\begin{equation*}
\lambda_o\langle z,\mathsf{M}_{N_k}z\rangle\le (\xi-\epsilon)\langle z,\Sigma_{N_k}^{-1}z\rangle.
\end{equation*}
It follows that $r(\mathsf{Q}_{N_k})\ge\epsilon
r(\Sigma_{N_k}^{-1})\ge\epsilon \sigma>0$ for any $k\ge 0$.  Then,
part 1 of Lemma \ref{primo} tells us that
$r(\mathsf{T}_{N_k})\ge\epsilon \sigma$ and
$r(\mathsf{S}_{N_k})\ge\epsilon \sigma$ for all $k$, $\mathsf{T}_N$
being the bulk matrix of $\mathsf{Q}_N$ and $\mathsf{S}_N$ being its
boundary matrix. As a consequence, parts 1 and 2 of Lemma
\ref{secondo} give $r(\mathsf{T}_N)\ge\epsilon \sigma$ for every $N\ge
1$ and $r(F(\theta))\ge \epsilon \sigma$ for all $\theta\in[0,2\pi]$,
$F$ being the symbol of the Hermitian block Toeplitz matrices
$\mathsf{T}_N$. This way, Lemma \ref{lem:terzo} shows that
$\lim_{N\uparrow\infty}\mathsf{S}_N=\mathsf{S}_\infty$ exists and is
well-defined. Since $r(\mathsf{S}_{N_k})\ge\epsilon \sigma$ for all
$k$ we have $r(\mathsf{S}_\infty)\ge\epsilon \sigma$, so that
$r(\mathsf{S}_N)>0$ for all sufficiently large $N$. In conclusion, we
find that both $\mathsf{T}_N\succ \mathsf{0}$ and $\mathsf{S}_N\succ
\mathsf{0}$ for all sufficiently large $N$, and part 2 of Lemma
\ref{primo} ensures us that $\mathsf{Q}_N\succ \mathsf{0}$ for all
such $N$. This means that
\begin{equation}
\langle z,\mathsf{Q}_Nz\rangle=\langle z,\big(\xi\Sigma_N^{-1}-\lambda_o\mathsf{M}_N\big)z\rangle>0
\label{ratio1}
\end{equation}
for all sufficiently large $N$ and $z\in(\Cm^d)^{N+2}$.  It follows
that
\begin{equation*}
  \adjustlimits\limsup_{\substack{N\uparrow\infty\\ \phantom{x}}}\sup_{\substack{z\in(\Cm^d)^{N+2}\\ z\neq 0}}
  \bigg\{\lambda_o\frac{\langle z,\mathsf{M}_Nz\rangle}{\langle z,\Sigma_N^{-1}z\rangle}\bigg\}\le\xi,
\end{equation*}
which demonstrates (\ref{prop11}) thanks to the arbitrariness of
$\xi>\xi_o$.

Let us demonstrate now the connection between the set $\Lambda$ and
the number $\xi_-$ and $\xi_+$. Fix $\lambda\in\Lambda$ and consider
the HQT matrix sequence $\Qseq$ with matrices
$\mathsf{Q}_N:=\Sigma_N^{-1}-\lambda\mathsf{M}_N$. We already know
that this HQT matrix sequence has symbol $F_\lambda$ and limit
boundary matrix
$\mathsf{S}_\infty=\begin{psmallmatrix}\mathcal{L}_\lambda & 0 \\ 0 &
\mathcal{R}_\lambda \end{psmallmatrix}$.  Since $f_\lambda>0$,
$\mathcal{L}_\lambda\succ 0$, and $\mathcal{R}_\lambda\succ 0$ by
hypothesis, we have $\mathsf{Q}_N\succ\mathsf{0}$ for all sufficiently
large $N$ according to Proposition \ref{detQ}. This shows that
\begin{equation*}
1-\lambda\frac{\langle z,\mathsf{M}_Nz\rangle}{\langle z,\Sigma_N^{-1}z\rangle}>0
\end{equation*}
for all sufficiently large $N$ and $z\in(\Cm^d)^{N+2}$. Thus, by
taking the infimum over $z$ we have
\begin{equation*}
1-\lambda\sup_{\substack{z\in(\Cm^d)^{N+2}\\ z\neq 0}}\bigg\{\frac{\langle z,\mathsf{M}_Nz\rangle}{\langle z,\Sigma_N^{-1}z\rangle}\bigg\}>0
\end{equation*}
if $\lambda\ge 0$ and
\begin{equation*}
1-\lambda\inf_{\substack{z\in(\Cm^d)^{N+2}\\ z\neq 0}}\bigg\{\frac{\langle z,\mathsf{M}_Nz\rangle}{\langle z,\Sigma_N^{-1}z\rangle}\bigg\}>0
\end{equation*}
if $\lambda<0$. By sending $N$ to infinity we realize that
$\lambda\xi_+\le 1$ if $\lambda\ge 0$, which also gives
$\lambda\xi_-\le 1$ as $\xi_-\le\xi_+$, and that $\lambda\xi_-\le 1$
if $\lambda<0$, which also gives $\lambda\xi_+\le 1$.

Conversely, if $\lambda\in\Rl$ is such that $\lambda\xi_-<1$ and
$\lambda\xi_+<1$, then there exists $\epsilon>0$ such that
$1-\lambda\xi_-\ge2\epsilon$ and $1-\lambda\xi_+\ge2\epsilon$. This
yields that for all sufficiently large $N$ and $z\in(\Cm^d)^{N+2}$
\begin{equation*}
\langle z,\Sigma_N^{-1}z\rangle-\lambda\langle z,\mathsf{M}_Nz\rangle\ge \epsilon\langle z,\Sigma_N^{-1}z\rangle.
\end{equation*}
This way, $r(\mathsf{Q}_N)\ge \epsilon r(\Sigma_N^{-1})\ge
\epsilon\sigma$ for all sufficiently large $N$, where
$\mathsf{Q}_N:=\Sigma_N^{-1}-\lambda\mathsf{M}_N$ and Lemma
\ref{lem:norm} has been invoked. It follows from Lemma \ref{primo}
that $r(\mathsf{T}_N)\ge\epsilon \sigma$ and
$r(\mathsf{S}_N)\ge\epsilon \sigma$ for all sufficiently large $N$, so
that
$\inf_{\theta\in[0,2\pi]}\{r(F_\lambda(\theta))\}\ge\epsilon\sigma$ by
Lemma \ref{secondo} and $r(S_\infty)\ge\epsilon\sigma$ by Lemma
\ref{lem:terzo} with
$\mathsf{S}_\infty=\begin{psmallmatrix}\mathcal{L}_\lambda & 0 \\ 0 &
\mathcal{R}_\lambda \end{psmallmatrix}$. Thus, $f_\lambda>0$,
$\mathcal{L}_\lambda\succ 0$, and $\mathcal{R}_\lambda\succ 0$, so
that $\lambda\in\Lambda$.
\end{proof}

The limit (\ref{limCGF}) together with the fact that $\Lambda$ is an
interval finally give the following important result. We stress that
$\Lambda$ contains an open neighborhood of the origin, as it is
manifest by Lemma \ref{lem:lambdaset}, so that
$\lambda_-<0<\lambda_+$.
\begin{proposition}
  \label{prop:CGF}
  For all $\lambda\in(\lambda_-,\lambda_+)$
  \begin{equation*}
    \lim_{N\uparrow\infty}\frac{1}{N}\ln\Ex\big[e^{\lambda W_N}\big]=-\frac{1}{4\pi}\int_0^{2\pi}\ln\det F_\lambda(\theta) \,d\theta.
  \end{equation*}
\end{proposition}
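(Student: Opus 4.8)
The plan is to derive Proposition \ref{prop:CGF} directly from the conditional convergence already recorded in (\ref{limCGF}) together with the interval structure of the set $\Lambda$ supplied by Lemma \ref{lem:lambdaset}. Recall that (\ref{limCGF}), obtained by feeding Proposition \ref{detQ} into the Gaussian-integral identity of Lemma \ref{lem:Gaussint}, already asserts that $\lim_{N\uparrow\infty}(1/N)\ln\Ex[e^{\lambda W_N}]=-\frac{1}{4\pi}\int_0^{2\pi}\ln\det F_\lambda(\theta)\,d\theta$ whenever $\lambda$ lies in the set $\Lambda$ of (\ref{def:Lambda_set}), i.e.\ whenever $f_\lambda>0$, $\mathcal{L}_\lambda\succ 0$, and $\mathcal{R}_\lambda\succ 0$. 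So the only point that remains is to confirm that every $\lambda\in(\lambda_-,\lambda_+)$ actually belongs to $\Lambda$.

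For that step I would first invoke Lemma \ref{lem:lambdaset}: the numbers $\xi_-$ and $\xi_+$ exist and are finite, and $\Lambda$ is squeezed between the two intervals $\{\lambda\in\Rl:\lambda\xi_-<1\text{ and }\lambda\xi_+<1\}$ and $\{\lambda\in\Rl:\lambda\xi_-\le 1\text{ and }\lambda\xi_+\le 1\}$; since both of these are intervals, $\Lambda$ is an interval too. By definitions (\ref{def:lambdam}) and (\ref{def:lambdap}) one has $\lambda_-=\inf\Lambda$ and $\lambda_+=\sup\Lambda$, so that for any $\lambda$ with $\lambda_-<\lambda<\lambda_+$ there exist $a,b\in\Lambda$ with $a<\lambda<b$; because $\Lambda$ is an interval, $[a,b]\subseteq\Lambda$ and hence $\lambda\in\Lambda$. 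This proves $(\lambda_-,\lambda_+)\subseteq\Lambda$. Along the way I would also record $\lambda_-<0<\lambda_+$: for $\lambda$ close enough to $0$ both $\lambda\xi_-<1$ and $\lambda\xi_+<1$, so a whole neighbourhood of the origin sits inside $\Lambda$.

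Combining the two observations finishes the proof: given $\lambda\in(\lambda_-,\lambda_+)$ we have $\lambda\in\Lambda$, hence $f_\lambda>0$, $\mathcal{L}_\lambda\succ 0$, $\mathcal{R}_\lambda\succ 0$, and (\ref{limCGF}) yields the stated limit, which is finite because $f_\lambda>0$ makes $\theta\mapsto\ln\det F_\lambda(\theta)$ continuous on $[0,2\pi]$.

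I do not anticipate a genuine obstacle here, since the substantive analysis — the HQT determinant asymptotics of Proposition \ref{detQ} and the existence and finiteness of the limiting Rayleigh ratios $\xi_\pm$ in Lemma \ref{lem:lambdaset} — has already been carried out; this proposition is essentially a bookkeeping corollary. The only point deserving a line of care is the elementary observation that an interval whose infimum is $\lambda_-$ and whose supremum is $\lambda_+$ contains the whole open interval $(\lambda_-,\lambda_+)$, irrespective of whether the endpoints themselves belong to $\Lambda$.
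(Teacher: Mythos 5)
Your proposal is correct and follows essentially the same route as the paper: the paper's proof of Proposition \ref{prop:CGF} is precisely the combination of the conditional limit (\ref{limCGF}) (Lemma \ref{lem:Gaussint} plus Proposition \ref{detQ}) with the fact, supplied by Lemma \ref{lem:lambdaset}, that $\Lambda$ is an interval whose infimum and supremum are $\lambda_-$ and $\lambda_+$, so that $(\lambda_-,\lambda_+)\subseteq\Lambda$. Your only terse step --- ``squeezed between two intervals, hence an interval'' --- is sound here because the outer set $\{\lambda:\lambda\xi_-\le 1,\ \lambda\xi_+\le 1\}$ is the closure of the inner set $\{\lambda:\lambda\xi_-<1,\ \lambda\xi_+<1\}$, differing from it by at most the two endpoints, which matches the paper's own (equally brief) convexity claim for $\Lambda$.
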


The function $\varphi$ that maps any $\lambda\in(\lambda_-,\lambda_+)$
in
\begin{equation*}
    \varphi(\lambda):=-\frac{1}{4\pi}\int_0^{2\pi}\ln\det F_\lambda(\theta) \,d\theta
  \end{equation*}
is convex. A rapid way to demonstrate this fact is to observe that
$\varphi$ is the limit of a sequence of convex functions by
Proposition \ref{prop:CGF}.  The function $\varphi$ is also
differentiable since $F_\lambda(\theta)$ is differentiable with
respect to $\lambda$ for each $\theta\in[0,2\pi]$. The asymptotic
theory of sequences of convex functions (see \cite{Rockafellar},
Theorem 24.5) gives for all $\lambda\in(\lambda_-,\lambda_+)$
\begin{equation}
\lim_{N\uparrow\infty}\frac{1}{N}\frac{d}{d\lambda}\ln\Ex\big[e^{\lambda W_N}\big]=\varphi'(\lambda).
\label{eq:limder}
\end{equation}
Limit (\ref{eq:limder}) will serve us to verify the lower large
deviation bound.  Other notable consequences of convexity are that the
limits $\lim_{\lambda\downarrow\lambda_-}\varphi(\lambda)=:\varphi_-$
and $\lim_{\lambda\uparrow\lambda_+}\varphi(\lambda)=:\varphi_+$ exist
(see \cite{Rockafellar}, Theorem 7.5) and that $\varphi'$ is
non-decreasing, in such a way that also the limits
$\lim_{\lambda\downarrow\lambda_-}\varphi'(\lambda)=:d_-$ and
$\lim_{\lambda\uparrow\lambda_+}\varphi'(\lambda)=:d_+$ exist.

\begin{remark}
\label{rem}
In Section \ref{sec:mainres} we have claimed that
$\lim_{N\uparrow\infty}(1/N)\ln\Ex[e^{\lambda W_N}]=+\infty$ if
$\lambda\notin\overline{(\lambda_-,\lambda_+)}=\bar{\Lambda}$.
Although we do not need this limit to prove a large deviation
principle, we can verify it as follows. Assume for instance that
$\lambda_+<+\infty$ and pick $\lambda>\lambda_+>0$. It must be
$\lambda\xi_+>1$ since, on the contrary,
$\lambda\xi_-\le\lambda\xi_+\le 1$ and $\lambda\in\bar{\Lambda}$ as a
consequence according to Lemma \ref{lem:lambdaset}. Fix $N\ge 1$ and
let $\mathsf{A}\in\mathsf{BL}_{N+2,N+2}$ be a real invertible matrix
such that $\Sigma_N=\mathsf{A}\mathsf{A}^\top$, which exists because
$\Sigma_N\succ 0$. Denoting by $m_1,\ldots,m_{(N+2)d}$ the eigenvalues
of the real symmetric matrix
$\mathsf{A}^\top\mathsf{M}_N\mathsf{A}\in\mathsf{BL}_{N+2,N+2}$, we
have
\begin{align}
  \nonumber
  \xi_N:=\max\big\{m_1,\ldots,m_{(N+2)d}\big\}
  &=\sup_{\substack{z\in(\Cm^d)^{N+2}\\z\ne 0}}\bigg\{\frac{\langle z,\mathsf{A}^\top\mathsf{M}_N\mathsf{A}z\rangle}{\langle z,z\rangle}\bigg\}
  =\sup_{\substack{z\in(\Cm^d)^{N+2}\\z\ne 0}}\bigg\{\frac{\langle \mathsf{A}z,\mathsf{M}_N\mathsf{A}z\rangle}{\langle z,z\rangle}\bigg\}\\
  \nonumber
  &=\sup_{\substack{z\in(\Cm^d)^{N+2}\\z\ne 0}}\bigg\{\frac{\langle z,\mathsf{M}_Nz\rangle}{\langle \mathsf{A}^{-1}z,\mathsf{A}^{-1}z\rangle}\bigg\}
  =\sup_{\substack{z\in(\Cm^d)^{N+2}\\z\ne 0}}\bigg\{\frac{\langle z,\mathsf{M}_Nz\rangle}{\langle z,\Sigma_N^{-1}z\rangle}\bigg\}.
\end{align}
Thus, Lemma \ref{lem:lambdaset} tells us that the number
$\lambda\xi_N$ approaches $\lambda\xi_+>1$ at large $N$, so that
$1-\lambda\xi_N\le 0$ if $N$ exceeds a threshold value $N_o$. This
shows that the matrix
$\Sigma_N^{-1}-\lambda\mathsf{M}_N=(\mathsf{A}^{-1})^\top(\mathsf{I}-\lambda\mathsf{A}^\top\mathsf{M}_N\mathsf{A})\mathsf{A}^{-1}$
is not positive-definite for $N>N_o$. Lemma \ref{lem:Gaussint}
concludes the proof.
\end{remark}

\subsection{The upper large deviation bound}
\label{sec:upper}

In this section we prove the upper large deviation bound for closed
sets. We start with some standard results from the theory of large
deviations that we shall use to prove both the upper large deviation
bound and the lower large deviation bound.  For each
$\eta\in(\lambda_-,\lambda_+)$ and $N\ge 1$, let $\prob_{\eta,N}$ be
the probability measure on $(\Omega,\mathscr{F})$ defined by the
exponential change of measure
\begin{equation}
\frac{d\prob_{\eta,N}}{d\prob}:=\frac{e^{\eta W_N}}{\Ex[e^{\eta W_N}]}.
\label{new_prob}
\end{equation}
Let $\varphi_\eta$ be the function that maps any $\lambda\in\Rl$ in
\begin{equation}
  \varphi_\eta(\lambda):=\limsup_{N\uparrow\infty}\frac{1}{N}\ln\Ex_{\eta,N}\big[e^{\lambda W_N}\big].
\label{new_phi}
\end{equation}
Since $0\in(\lambda_-,\lambda_+)$, we have $\prob_{0,N}=\prob$ for all
$N\ge 1$ and $\varphi_0(\lambda)=\varphi(\lambda)$ for all
$\lambda\in(\lambda_-,\lambda_+)$, $\varphi$ being the convex
differentiable function introduced at the end of the last
section. Moreover, if $\lambda\in\Rl$ is such that
$\lambda+\eta\in(\lambda_-,\lambda_+)$, then
\begin{equation*}
  \varphi_\eta(\lambda)=\limsup_{N\uparrow\infty}\frac{1}{N}\ln\frac{\Ex[e^{(\lambda+\eta) W_N}]}{\Ex[e^{\eta W_N}]}
  =\varphi(\lambda+\eta)-\varphi(\eta)<+\infty.
\end{equation*}
It follows that the function $\varphi_\eta$ is finite and
differentiable in an open neighborhood of the origin with
$\varphi_\eta'(0)=\varphi'(\eta)$. The following lemma states an upper
large deviation bound with respect to the measure $\prob_{\eta,N}$. We
recall that the Fenchel-Legendre transform $I_\eta$ of $\varphi_\eta$
is the convex function that associates $w\in\Rl$ with
\begin{equation*}
I_\eta(w):=\sup_{\lambda\in\Rl}\big\{w\lambda-\varphi_\eta(\lambda)\big\}.
\end{equation*}
\begin{lemma}
  \label{GET}
  Fix $\eta\in(\lambda_-,\lambda_+)$. The following conclusions hold:
  \begin{enumerate}
  \item the Fenchel-Legendre transform $I_\eta$ of $\varphi_\eta$ has compact level sets;
  \item for each closed set $\mathcal{F}\subseteq\Rl$
  \begin{equation*}
    \limsup_{N\uparrow\infty}\frac{1}{N}\ln\prob_{\eta,N}\bigg[\frac{W_N}{N}\in \mathcal{F}\bigg]\le-\inf_{w\in\mathcal{F}}\{I_\eta(w)\};
  \end{equation*}
  \item for each $\epsilon>0$ there exists $\kappa>0$ such that for
    all sufficiently large $N$
    \begin{equation*}
    \prob_{\eta,N}\bigg[\bigg|\frac{W_N}{N}-\varphi'(\eta)\bigg|\ge\epsilon\bigg]\le e^{-\kappa N}.
  \end{equation*}
  \end{enumerate}
\end{lemma}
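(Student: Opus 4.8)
The plan is to deduce Lemma \ref{GET} from the G\"artner--Ellis theorem applied under the tilted measure $\prob_{\eta,N}$, using the properties of $\varphi_\eta$ already recorded just above the statement. Recall that $\varphi_\eta$ is finite and differentiable in an open neighborhood of the origin, with $\varphi_\eta'(0)=\varphi'(\eta)$, and that $\varphi_\eta(\lambda)=\varphi(\lambda+\eta)-\varphi(\eta)$ whenever $\lambda+\eta\in(\lambda_-,\lambda_+)$; in particular $\varphi_\eta$ is convex on the interval $(\lambda_--\eta,\lambda_+-\eta)$, which contains $0$ in its interior. First I would upgrade $\varphi_\eta$ to a genuine limit on that interval: since $\eta\in(\lambda_-,\lambda_+)$ and $\varphi$ is the pointwise limit furnished by Proposition \ref{prop:CGF}, the $\limsup$ in (\ref{new_phi}) is actually a limit for every $\lambda$ with $\lambda+\eta\in(\lambda_-,\lambda_+)$. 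Outside $\overline{(\lambda_-,\lambda_+)-\eta}$ we have $\varphi_\eta(\lambda)=+\infty$ by Remark \ref{rem} (or we may simply retain the $\limsup$ definition, which only makes the upper bound easier). Hence $\varphi_\eta$ is a lower semicontinuous convex function, finite and differentiable in a neighborhood of the origin.

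For part 1, the Fenchel--Legendre transform $I_\eta$ is convex and lower semicontinuous by construction, and it has compact level sets precisely because $\varphi_\eta$ is finite in a neighborhood of $0$: the standard duality estimate $I_\eta(w)\ge w\lambda-\varphi_\eta(\lambda)$ applied with two values $\lambda=\pm\delta$ in the domain of finiteness gives $I_\eta(w)\ge \delta|w|-\max\{\varphi_\eta(\delta),\varphi_\eta(-\delta)\}\to+\infty$ as $|w|\to\infty$, so every level set $\{w:I_\eta(w)\le c\}$ is bounded, and it is closed by lower semicontinuity of $I_\eta$.

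For part 2, I would invoke the upper bound half of the G\"artner--Ellis theorem (see \cite{Dembo,Frank1}): the random variables $W_N/N$ under $\prob_{\eta,N}$ have scaled cumulant generating function whose $\limsup$ is $\varphi_\eta$, which is finite in a neighborhood of the origin, and therefore $\{W_N/N\}$ is exponentially tight under $\prob_{\eta,N}$ and satisfies
\begin{equation*}
\limsup_{N\uparrow\infty}\frac{1}{N}\ln\prob_{\eta,N}\bigg[\frac{W_N}{N}\in\mathcal{F}\bigg]\le-\inf_{w\in\mathcal{F}}\{I_\eta(w)\}
\end{equation*}
for every closed $\mathcal{F}\subseteq\Rl$. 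Only the upper bound is needed here, so no differentiability or steepness of $\varphi_\eta$ is required beyond finiteness near $0$ — this is exactly the hypothesis met.

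Finally, part 3 follows from part 2 by a routine argument. Since $\varphi_\eta$ is differentiable at $0$ with $\varphi_\eta'(0)=\varphi'(\eta)$, the transform $I_\eta$ attains the value $0$ only at $w=\varphi'(\eta)$: indeed $I_\eta(\varphi'(\eta))=\varphi'(\eta)\cdot 0-\varphi_\eta(0)=0$, and if $I_\eta(w)=0$ for some $w\ne\varphi'(\eta)$ then convexity would force $I_\eta\equiv 0$ on the segment between them, contradicting $\varphi_\eta'(0)=\varphi'(\eta)$ being the unique subgradient-inverse point. Hence, for any $\epsilon>0$, the closed set $\mathcal{F}:=\{w\in\Rl:|w-\varphi'(\eta)|\ge\epsilon\}$ is disjoint from the (unique) zero of $I_\eta$, and since $I_\eta$ has compact level sets it follows that $\kappa:=\inf_{w\in\mathcal{F}}\{I_\eta(w)\}>0$; applying part 2 with this $\mathcal{F}$ gives $\prob_{\eta,N}[|W_N/N-\varphi'(\eta)|\ge\epsilon]\le e^{-\kappa N/2}$ (say) for all sufficiently large $N$, which is the claim after relabeling $\kappa$. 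The only point that needs a little care is the upgrade of the $\limsup$ in (\ref{new_phi}) to a true limit on the relevant interval and the identification of the unique minimizer of $I_\eta$; everything else is a direct citation of G\"artner--Ellis and elementary convex analysis, so I do not expect a serious obstacle.
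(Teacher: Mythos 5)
Parts 1 and 2 are correct and essentially coincide with the paper's proof: the level sets of $I_\eta$ are closed by lower semicontinuity and bounded by the duality estimate with $\lambda=\pm\delta$, and the upper bound for closed sets is exactly the Chernoff/G\"artner--Ellis upper bound, which needs only the limsup-defined $\varphi_\eta$ and its finiteness near the origin (for exponential tightness); your optional ``upgrade'' of the limsup to a limit is unnecessary but harmless.

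In part 3 your overall reduction (unique zero of $I_\eta$ at $w=\varphi'(\eta)$, then positivity of $\inf_{|v-w|\ge\epsilon}I_\eta(v)$ via compact level sets and lower semicontinuity) is again the paper's route, but the justification of the uniqueness step has a gap. The displayed identity $I_\eta(\varphi'(\eta))=\varphi'(\eta)\cdot 0-\varphi_\eta(0)=0$ is not an evaluation of the supremum: plugging $\lambda=0$ only gives $I_\eta\ge 0$. To conclude $I_\eta(\varphi'(\eta))=0$ you need the supporting-line bound $\varphi_\eta(\lambda)\ge\varphi'(\eta)\lambda$ for \emph{all} $\lambda\in\Rl$, not just for $\lambda+\eta\in(\lambda_-,\lambda_+)$; this can be supplied by noting that $\varphi_\eta$, being a limsup of convex functions of $\lambda$, is convex on all of $\Rl$ and differentiable at $0$, but you did not say this, and your subsequent ``unique subgradient-inverse point'' contradiction is left vague. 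The paper avoids the issue entirely by arguing in the other direction: if $I_\eta(v)=0$, then $v\lambda\le\varphi_\eta(\lambda)=\varphi(\lambda+\eta)-\varphi(\eta)$ for all $\lambda$ in a neighborhood of $0$, and differentiability of $\varphi$ at $\eta$ together with $\varphi_\eta(0)=0$ forces $v=\varphi'(\eta)$; no claim that $I_\eta$ actually vanishes at $\varphi'(\eta)$ is needed. With that repair (or with the global-convexity observation made explicit), your part 3 goes through as stated.
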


\begin{proof}
$I_\eta$ is lower semicontinuous as any Fenchel-Legendre transform
  (see \cite{Rockafellar}, Theorem 12.2). Due to lower semicontinuity,
  the level sets of $I_\eta$ are closed. In order to prove part 1, it
  remains to verify that they are bounded. As the function
  $\varphi_\eta$ is finite in an open neighborhood of the origin,
  there exists $\delta>0$ such that $\varphi_\eta(\delta)<+\infty$ and
  $\varphi_\eta(-\delta)<+\infty$. If $I_\eta(w)\le a$ for given real
  numbers $w$ and $a$, then $w\delta-\varphi_\eta(\delta)\le
  I_\eta(w)\le a$ and $-w\delta-\varphi_\eta(-\delta)\le I_\eta(w)\le
  a$ by definition, that is $-[a+\varphi_\eta(-\delta)]/\delta\le w\le
  [a+\varphi_\eta(\delta)]/\delta$.
  
  Part 2 is a standard result from large deviation theory (see
  \cite{Dembo}, Theorem 2.3.6 and Exercise 2.3.25). In a nutshell, the
  upper large deviation bound for compact sets is a manipulation of
  the Chernoff bound and holds without any assumption on the function
  $\varphi_\eta$. Extension to all closed sets is made possible by
  finiteness of $\varphi_\eta$ in an open neighborhood of the origin,
  which entails exponential tightness.

  As far as part 3 is concerned, in the light of part 2 it suffices to
  demonstrate that
  \begin{equation}
    \inf_{v\notin(w-\epsilon,w+\epsilon)}\{I_\eta(v)\}>0
\label{part3_Upper}
  \end{equation}
  for each $\epsilon>0$ with $w:=\varphi'(\eta)$. To begin with, let
  us observe that $I_\eta(v)>0$ if $v\ne w$. On the contrary, if
  $I_\eta(v)=0$, then for all $\lambda$ in a neighborhood of the
  origin we would have
  $\varphi(\lambda+\eta)-\varphi(\eta)=\varphi_\eta(\lambda)\ge
  v\lambda$ by definition of $I_\eta(v)$. This would imply
  $v=\varphi'(\eta)=:w$, which contradicts the hypothesis $v\ne w$. We
  can now verify (\ref{part3_Upper}). Pick $\epsilon>0$ and notice
  that the set $\mathcal{A}:=\{v\in\Rl:I_\eta(v)\le 1\}$ is compact by
  part 1. If $(w-\epsilon,w+\epsilon)^c\,\cap\mathcal{A}=\emptyset$,
  then $\inf_{v\notin(w-\epsilon,w+\epsilon)}\{I_\eta(v)\}\ge 1$. If
  $(w-\epsilon,w+\epsilon)^c\,\cap\mathcal{A}\ne\emptyset$, then there
  exists $v_\star\in(w-\epsilon,w+\epsilon)^c\,\cap\mathcal{A}$ such
  that $I_\eta(v)\ge I_\eta(v_\star)$ for all
  $v\in(w-\epsilon,w+\epsilon)^c\,\cap\mathcal{A}$, and hence for all
  $v\notin(w-\epsilon,w+\epsilon)$, as $I_\eta$ is a lower
  semicontinuous function and
  $(w-\epsilon,w+\epsilon)^c\,\cap\mathcal{A}$ is a compact set. On
  the other hand, we have $I_\eta(v_\star)>0$ since $v_\star\ne w$.
\end{proof}

Lemma \ref{GET} gives the following upper large deviation bound for
the quadratic functionals $W_N$ of the stable Gauss-Markov processes
$\X$.
\begin{proposition}
  The following conclusions hold:
  \begin{enumerate}
  \item the convex function $I$ that maps $w\in\Rl$ in $I(w):=\sup_{\lambda\in(\lambda_-,\lambda_+)}\{w\lambda-\varphi(\lambda)\}$ has compact level sets;
  \item for each closed set $\mathcal{F}\subseteq\Rl$
  \begin{equation*}
    \limsup_{N\uparrow\infty}\frac{1}{N}\ln\prob\bigg[\frac{W_N}{N}\in \mathcal{F}\bigg]\le-\inf_{w\in\mathcal{F}}\{I(w)\}.
  \end{equation*}
  \end{enumerate}
\end{proposition}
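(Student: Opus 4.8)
The plan is to deduce both parts of the proposition from Lemma \ref{GET} specialized to $\eta=0$. Since $0\in(\lambda_-,\lambda_+)$, the change of measure (\ref{new_prob}) with $\eta=0$ is trivial, so that $\prob_{0,N}=\prob$ for every $N$, and definition (\ref{new_phi}) reads $\varphi_0(\lambda)=\limsup_{N\uparrow\infty}(1/N)\ln\Ex[e^{\lambda W_N}]$ for all $\lambda\in\Rl$. By Proposition \ref{prop:CGF} this $\limsup$ equals $\varphi(\lambda)$ when $\lambda\in(\lambda_-,\lambda_+)$, and by Remark \ref{rem} it equals $+\infty$ when $\lambda\notin\overline{(\lambda_-,\lambda_+)}$. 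I would also record that $\varphi_0$ is convex: it is the pointwise (decreasing, as $m\uparrow\infty$) limit of the convex functions $\lambda\mapsto\sup_{N\ge m}(1/N)\ln\Ex[e^{\lambda W_N}]$, and pointwise limits of convex functions are convex.

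The key step is to identify the Fenchel-Legendre transform $I_0$ of $\varphi_0$ with the function $I$ of the statement. One inequality is immediate: because $\varphi_0=\varphi$ on $(\lambda_-,\lambda_+)$, one has $I_0(w)=\sup_{\lambda\in\Rl}\{w\lambda-\varphi_0(\lambda)\}\ge\sup_{\lambda\in(\lambda_-,\lambda_+)}\{w\lambda-\varphi(\lambda)\}=I(w)$ for every $w$. For the reverse inequality, since $\varphi_0=+\infty$ outside $[\lambda_-,\lambda_+]$ the supremum defining $I_0(w)$ reduces to one over $[\lambda_-,\lambda_+]$; the open interval contributes exactly $I(w)$, and at a finite endpoint $\lambda_\pm$ convexity of $\varphi_0$ forces $\varphi_0(\lambda_\pm)\ge\lim_{\lambda\to\lambda_\pm}\varphi(\lambda)=\varphi_\pm$, hence $w\lambda_\pm-\varphi_0(\lambda_\pm)\le w\lambda_\pm-\varphi_\pm=\lim_{\lambda\to\lambda_\pm}(w\lambda-\varphi(\lambda))\le I(w)$. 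Therefore $I_0=I$.

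Granting this, part 1 of the proposition is part 1 of Lemma \ref{GET} at $\eta=0$ (the level sets of $I_0=I$ are compact), and part 2 is part 2 of Lemma \ref{GET} at $\eta=0$ together with $\prob_{0,N}=\prob$, which yields $\limsup_{N\uparrow\infty}(1/N)\ln\prob[W_N/N\in\mathcal{F}]\le-\inf_{w\in\mathcal{F}}\{I_0(w)\}=-\inf_{w\in\mathcal{F}}\{I(w)\}$ for every closed $\mathcal{F}\subseteq\Rl$. As a self-contained alternative for the compactness of the level sets of $I$, I would note that $I$ is convex and lower semicontinuous as a supremum of affine functions, is bounded below by $-\varphi(0)$, and is coercive: fixing $\mu\in(0,\lambda_+)$ and $\nu\in(\lambda_-,0)$, which is possible since $\lambda_-<0<\lambda_+$, one has $I(w)\ge\max\{w\mu-\varphi(\mu),\,w\nu-\varphi(\nu)\}$, and the right-hand side tends to $+\infty$ as $|w|\to\infty$.

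I expect no real obstacle here, since the statement is in essence Lemma \ref{GET} read at $\eta=0$; the only mild subtlety is the bookkeeping needed to match $I_0$ with $I$ at finite endpoints $\lambda_\pm$, and even that is dispensable for part 2, where the trivial inequality $I_0\ge I$, hence $-\inf_{w\in\mathcal{F}}\{I_0(w)\}\le-\inf_{w\in\mathcal{F}}\{I(w)\}$, already delivers the claimed upper bound.
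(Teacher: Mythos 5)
Your proposal is correct and takes essentially the same route as the paper: part 2 is obtained from part 2 of Lemma \ref{GET} at $\eta=0$ together with the trivial inequality $I_0\ge I$, and compactness of the level sets of $I$ follows from the same lower-semicontinuity-plus-coercivity argument used for part 1 of Lemma \ref{GET}, which works because $\lambda_-<0<\lambda_+$. The extra identification $I_0=I$, which relies on Remark \ref{rem} and the endpoint convexity bound, is correct but unnecessary---exactly as you observe in your closing remark, and the paper indeed bypasses it.
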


\begin{proof}
  $I$ is the Fenchel-Legendre transform of the function that
  associates $\lambda\in(\lambda_-,\lambda_+)$ with $\varphi(\lambda)$
  and $\lambda\notin(\lambda_-,\lambda_+)$ with $+\infty$. Then, part
  1 is proved in the same way of part 1 of Lemma \ref{GET}.  Part 2
  follows from part 2 of Lemma \ref{GET} with $\eta=0$ as
  $I_0(w):=\sup_{\lambda\in\Rl}\{w\lambda-\varphi_0(\lambda)\}\ge\sup_{\lambda\in(\lambda_-,\lambda_+)}\{w\lambda-\varphi(\lambda)\}
  =:I(w)$ for every $w\in\Rl$.
\end{proof}

\subsection{The lower large deviation bound}
\label{sec:lower}

In this section we prove the lower large deviation bound for open
sets, namely that for each open set $\mathcal{G}\subseteq\Rl$
  \begin{equation*}
    \liminf_{N\uparrow\infty}\frac{1}{N}\ln\prob\bigg[\frac{W_N}{N}\in \mathcal{G}\bigg]\ge-\inf_{w\in\mathcal{G}}\{I(w)\},
  \end{equation*}
  where $I$ is the function that maps $w\in\Rl$ in
  $I(w):=\sup_{\lambda\in(\lambda_-,\lambda_+)}\{w\lambda-\varphi(\lambda)\}$.
  This is tantamount to state that for all $w\in\Rl$ and $\delta>0$
\begin{equation}
\liminf_{N\uparrow\infty}\frac{1}{N}\ln \prob\bigg[\frac{W_N}{N}\in (w-\delta,w+\delta)\bigg]\ge -I(w).
\label{lower_bound}
\end{equation}
We start with the following lower bound based on Lemma \ref{GET}.
\begin{lemma}
  \label{lower_diff}
  Fix $w\in\Rl$ and assume that there exists $\eta\in(\lambda_-,\lambda_+)$
  such that $w=\varphi'(\eta)$. Then, for every $\delta>0$
  \begin{equation*}
\liminf_{N\uparrow\infty}\frac{1}{N}\ln \prob\bigg[\frac{W_N}{N}\in (w-\delta,w+\delta)\bigg]\ge \varphi(\eta)-w\eta.
\end{equation*}
\end{lemma}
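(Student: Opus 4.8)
The plan is to run the classical tilting (exponential change of measure) argument, which is available here precisely because its two analytic ingredients have already been secured: the limit $\lim_{N\uparrow\infty}(1/N)\ln\Ex[e^{\eta W_N}]=\varphi(\eta)$ from Proposition \ref{prop:CGF} (legitimate since $\eta\in(\lambda_-,\lambda_+)$), and the concentration of $W_N/N$ around $\varphi'(\eta)$ under $\prob_{\eta,N}$ from part 3 of Lemma \ref{GET}. Since by hypothesis $w=\varphi'(\eta)$, the tilted measure $\prob_{\eta,N}$ asymptotically puts all of its mass on the event that $W_N/N$ is near $w$, and this is exactly what makes the lower bound sharp at the value $\varphi(\eta)-w\eta$.

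Concretely, I would fix $\epsilon\in(0,\delta)$, note $(w-\epsilon,w+\epsilon)\subseteq(w-\delta,w+\delta)$, and use the definition (\ref{new_prob}) of $\prob_{\eta,N}$ to write
\begin{equation*}
\prob\bigg[\frac{W_N}{N}\in(w-\delta,w+\delta)\bigg]\ge\prob\bigg[\bigg|\frac{W_N}{N}-w\bigg|<\epsilon\bigg]=\Ex\big[e^{\eta W_N}\big]\,\Ex_{\eta,N}\bigg[\mathbbm{1}\bigg\{\bigg|\frac{W_N}{N}-w\bigg|<\epsilon\bigg\}e^{-\eta W_N}\bigg].
\end{equation*}
On the event $\{|W_N/N-w|<\epsilon\}$ one has $|\eta W_N-\eta Nw|<|\eta|N\epsilon$, hence $e^{-\eta W_N}\ge e^{-\eta Nw-|\eta|N\epsilon}$; replacing the integrand by this deterministic factor leaves
\begin{equation*}
\prob\bigg[\frac{W_N}{N}\in(w-\delta,w+\delta)\bigg]\ge\Ex\big[e^{\eta W_N}\big]\,e^{-\eta Nw-|\eta|N\epsilon}\,\prob_{\eta,N}\bigg[\bigg|\frac{W_N}{N}-w\bigg|<\epsilon\bigg].
\end{equation*}

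Now I would take $(1/N)\ln(\cdot)$ and pass to the $\liminf$ as $N\uparrow\infty$: the first factor contributes $\varphi(\eta)$ by Proposition \ref{prop:CGF}, the exponential factor contributes $-\eta w-|\eta|\epsilon$, and, since $w=\varphi'(\eta)$, part 3 of Lemma \ref{GET} gives $\prob_{\eta,N}[|W_N/N-w|<\epsilon]\ge 1-e^{-\kappa N}$ for all large $N$, so the last factor contributes $0$. This yields
\begin{equation*}
\liminf_{N\uparrow\infty}\frac{1}{N}\ln\prob\bigg[\frac{W_N}{N}\in(w-\delta,w+\delta)\bigg]\ge\varphi(\eta)-\eta w-|\eta|\epsilon.
\end{equation*}
The only point that needs a little attention is that the left-hand side does not depend on $\epsilon$, so letting $\epsilon\downarrow0$ removes the spurious term $|\eta|\epsilon$ and produces exactly $\varphi(\eta)-w\eta$. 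There is no real obstacle: every genuine difficulty (existence and differentiability of $\varphi$, exponential tightness and the structure of $I_\eta$, concentration under $\prob_{\eta,N}$) has already been discharged in Proposition \ref{prop:CGF} and Lemma \ref{GET}, and what is left is this bookkeeping together with the elementary pointwise bound on $e^{-\eta W_N}$.
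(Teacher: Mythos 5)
Your proposal is correct and follows essentially the same route as the paper: the same exponential tilting bound on the event $\{|W_N/N-w|<\epsilon\}$ with the factor $e^{-\eta Nw-|\eta|N\epsilon}$, the same appeal to Proposition \ref{prop:CGF} for the normalization and to part 3 of Lemma \ref{GET} for concentration under $\prob_{\eta,N}$, and the same final limit $\epsilon\downarrow 0$. The only difference is cosmetic bookkeeping in how the change of measure is written, so there is nothing to add.
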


\begin{proof}
Let $\prob_{\eta,N}$ and $\varphi_\eta$ be the probability measure
(\ref{new_prob}) and the function (\ref{new_phi}), respectively.  Fix
$\delta>0$ and pick $\epsilon\in(0,\delta)$.  The fact that $\eta
W_N-Nw\eta-N\epsilon |\eta|\le 0$ if $W_N/N\in(w-\epsilon,w+\epsilon)$
gives for each $N\ge 1$
\begin{align}
  \nonumber
  \prob\bigg[\frac{W_N}{N}\in(w-\delta,w+\delta)\bigg]&\ge
  e^{-Nw\eta -N\epsilon |\eta|}\,\Ex\bigg[e^{\eta W_N}\mathds{1}_{\big\{\frac{W_N}{N}\in(w-\epsilon,w+\epsilon)\big\}}\bigg]\\
  \nonumber
  &=e^{-Nw\eta -N\epsilon |\eta|}\,\Ex\big[e^{\eta W_N}\big]\,\prob_{\eta,N}\bigg[\bigg|\frac{W_N}{N}-w\bigg|<\epsilon\bigg],
\end{align}
and part 3 of Lemma \ref{GET} shows that
 \begin{equation*}
    \lim_{N\uparrow\infty}\prob_{\eta,N}\bigg[\bigg|\frac{W_N}{N}-w\bigg|<\epsilon\bigg]=1.
 \end{equation*}
Thus, by invoking Proposition \ref{prop:CGF} we obtain
\begin{equation*}
  \liminf_{N\uparrow\infty}\frac{1}{N}\ln\prob\bigg[\frac{W_N}{N}\in(w-\delta,w+\delta)\bigg]\ge \varphi(\eta)-w\eta -\epsilon|\eta|.
\end{equation*}
The lemma follows from here by sending $\epsilon$ to 0.
\end{proof}

Lemma \ref{lower_diff} allows us to demonstrate the lower large
deviation bound (\ref{lower_bound}) for $w$ in the closure
$\overline{(d_-,d_+)}$ of $(d_-,d_+)$, where
$d_-:=\lim_{\lambda\downarrow\lambda_-}\varphi'(\lambda)$ and
$d_+:=\lim_{\lambda\uparrow\lambda_+}\varphi'(\lambda)$ as in Section
\ref{sec:cum}. Notice that convexity and differentiability of
$\varphi$ yield
$\varphi(\lambda)\ge\varphi(\eta)+\varphi'(\eta)(\lambda-\eta)$ for
every $\lambda$ and $\eta$ in $(\lambda_-,\lambda_+)$, so that
$I(w)=w\eta-\varphi(\eta)$ if $w=\varphi'(\eta)$ for some
$\eta\in(\lambda_-,\lambda_+)$.  Since $d_-\le\varphi'(0)\le d_+$ as
$\varphi'$ is non-decreasing, if $d_-=d_+$, then
$\overline{(d_-,d_+)}$ contains only $\varphi'(0)$ and bound
(\ref{lower_bound}) directly follows from Lemma \ref{lower_diff} with
$\eta=0$. If $d_-<d_+$ and $w\in(d_-,d_+)$, then there exists
$\eta\in(\lambda_-,\lambda_+)$ such that $w=\varphi'(\eta)$ and bound
(\ref{lower_bound}) follows again from Lemma \ref{lower_diff} with
such $\eta$. If $d_-<d_+<+\infty$, then $\overline{(d_-,d_+)}$
contains $d_+$, and we tackle the case $w=d_+$ as follows. Fix
$\delta>0$. There exist $v\in(d_-,d_+)$ arbitrarily close to $w$ and
$\epsilon>0$ such that
$(v-\epsilon,v+\epsilon)\subseteq(w-\delta,w+\delta)$. This way, since
(\ref{lower_bound}) holds for $v$ we find
\begin{equation*}
  \liminf_{N\uparrow\infty}\frac{1}{N}\ln \prob\bigg[\frac{W_N}{N}\in (w-\delta,w+\delta)\bigg]\ge
  \liminf_{N\uparrow\infty}\frac{1}{N}\ln \prob\bigg[\frac{W_N}{N}\in (v-\epsilon,v+\epsilon)\bigg]\ge-I(v). 
\end{equation*}
From here, we get bound (\ref{lower_bound}) for $w=d_+$ by sending $v$
to $w$ and by observing that $\lim_{v\uparrow w}I(v)=I(w)$ by
convexity and lower semicontinuity of $I$ (see \cite{Rockafellar},
Corollary 7.5.1). Similar arguments can be used to solve the case
$-\infty<d_-<d_+$ and $w=d_-$.

In order to complete the proof of the lower large deviation bound
(\ref{lower_bound}), it remains to address the case $d_+<+\infty$ and
$w>d_+$, as well as the case $d_->-\infty$ and $w<d_-$. They are
similar, so that we discuss in detail the former only, omitting the
proof of the latter. Assume that $d_+<+\infty$ and fix $w>d_+$. We
claim that the case $\lambda_+=+\infty$ is trivial, so that we also
suppose $\lambda_+<+\infty$. In fact, convexity and differentiability
of $\varphi$ combined with $\varphi(0)=0$ give
$\varphi(\lambda)\le\lambda\varphi'(\lambda)$ for all
$\lambda\in(\lambda_-,\lambda_+)$. It follows that $I(w)\ge
w\lambda-\varphi(\lambda)\ge\lambda[w-\varphi'(\lambda)]$ for all
$\lambda\in(\lambda_-,\lambda_+)$. Thus, if $\lambda_+=+\infty$, then
we realize that $I(w)=+\infty$ by sending $\lambda$ to $\lambda_+$, as
$\lim_{\lambda\uparrow\lambda_+}\varphi'(\lambda)=d_+<w$, and the
lower bound (\ref{lower_bound}) is trivial. Observe that if
$\lambda_+<+\infty$ and $d_+<+\infty$, then
$\lim_{\lambda\uparrow\lambda_+}\varphi(\lambda)=:\varphi_+<+\infty$
as $\varphi(\lambda)\le\lambda\varphi'(\lambda)$ for all
$\lambda\in(\lambda_-,\lambda_+)$. Since the function that associates
$\lambda\in(\lambda_-,\lambda_+)$ with $w\lambda-\varphi(\lambda)$ is
increasing under the hypothesis $w>d_+$, we have
$I(w):=\sup_{\lambda\in(\lambda_-,\lambda_+)}\{w\lambda-\varphi(\lambda)\}=w\lambda_+-\varphi_+$.

The idea to prove (\ref{lower_bound}) for $w>d_+$ and
$\lambda_+<+\infty$ is to make a change of measure like in Lemma
\ref{lower_diff}, but this time the parameter $\eta$ must depend on
the time $N$. Let us introduce such parameter. Pick $N\ge 1$.  Since
the covariance matrix $\Sigma_N\in\mathsf{BL}_{N+2,N+2}$ is symmetric
positive-definite, there exists a real invertible matrix
$\mathsf{A}\in\mathsf{BL}_{N+2,N+2}$ such that
$\Sigma_N=\mathsf{A}\mathsf{A}^\top$. Like in Remark \ref{rem}, let
$m_1,\ldots,m_{(N+2)d}$ be the eigenvalues of the real symmetric
matrix $\mathsf{A}^\top\mathsf{M}_N\mathsf{A}\in\mathsf{BL}_{N+2,N+2}$
and observe that
\begin{align}
  \nonumber
  \xi_N:=\max\big\{m_1,\ldots,m_{(N+2)d}\big\}
  &=\sup_{\substack{z\in(\Cm^d)^{N+2}\\z\ne 0}}\bigg\{\frac{\langle z,\mathsf{A}^\top\mathsf{M}_N\mathsf{A}z\rangle}{\langle z,z\rangle}\bigg\}\\
  \nonumber
  &=\sup_{\substack{z\in(\Cm^d)^{N+2}\\z\ne 0}}\bigg\{\frac{\langle z,\mathsf{M}_Nz\rangle}{\langle z,(\mathsf{A}^{-1})^\top\mathsf{A}^{-1}z\rangle}\bigg\}
  =\sup_{\substack{z\in(\Cm^d)^{N+2}\\z\ne 0}}\bigg\{\frac{\langle z,\mathsf{M}_Nz\rangle}{\langle z,\Sigma_N^{-1}z\rangle}\bigg\}.
\end{align}
Similarly
\begin{equation*}
  \min\big\{m_1,\ldots,m_{(N+2)d}\big\}
  =\inf_{\substack{z\in(\Cm^d)^{N+2}\\z\ne 0}}\bigg\{\frac{\langle z,\mathsf{M}_Nz\rangle}{\langle z,\Sigma_N^{-1}z\rangle}\bigg\}.
\end{equation*}
We have $\lim_{N\uparrow\infty}\xi_N=\xi_+$ by Lemma
\ref{lem:lambdaset}, whereas $\min\{m_1,\ldots,m_{(N+2)d}\}$
approaches $\xi_-$ at large $N$. Lemma \ref{lem:lambdaset} also gives
$\xi_+>0$ since $\lambda_+<+\infty$ by hypothesis. Indeed, $\xi_+\le
0$ would entail that the set $\Lambda$ contains all positive real
numbers.  Lemma \ref{lem:Gaussint} and the fact that
$\det\Sigma_N=\det\Sigma_o$ show that if
$\mathsf{I}-\lambda\mathsf{A}^\top\mathsf{M}_N\mathsf{A}\succ 0$,
namely if $1-\lambda m_l>0$ for $l=1,\ldots,(N+2)d$, then
\begin{equation}
  \ln\Ex\big[e^{\lambda W_{N+2}}\big]=-\frac{1}{2}\ln\det\big(\mathsf{I}-\lambda\mathsf{A}^\top\mathsf{M}_N\mathsf{A}\big)
  =-\frac{1}{2}\sum_{l=1}^{(N+2)d}\ln(1-\lambda m_l).
\label{eq:CGFeig}
\end{equation}
We claim that for all sufficiently large $N$ there exists
$\eta_N\in(0,\xi_N)$ such that
\begin{equation}
\frac{1}{2(N+2)}\sum_{l=1}^{(N+2)d}\frac{m_l}{1-\eta_N m_l}=w.
\label{eqetaN}
\end{equation}
Notice that $\xi_N>0$ for all sufficiently large $N$ as
$\lim_{N\uparrow\infty}\xi_N=\xi_+>0$.  In fact, identity
(\ref{eq:CGFeig}) in combination with (\ref{eq:limder}) yields that
$[2(N+2)]^{-1}\sum_{l=1}^{(N+2)d}m_l$ approaches $\varphi'(0)\le
d_+<w$ when $N$ is sent to infinity.  Thus, for all sufficiently large
$N$, the continuous function that maps $\lambda\in[0,\xi_N)$ in
  $[2(N+2)]^{-1}\sum_{l=1}^{(N+2)d}m_l(1-\lambda m_l)^{-1}$ increases
  from a value smaller than $w$ at $\lambda=0$ to $+\infty$ at
  $\lambda=\xi_N$, so that there exists a unique $\eta_N$ satisfying
  (\ref{eqetaN}).  We must have
  $\lim_{N\uparrow\infty}\eta_N=\lambda_+$. On the contrary, there
  would exist $\epsilon>0$ and a diverging sequence $\{N_k\}_{k\ge 0}$
  of positive integers such that $\eta_{N_k}<\lambda_+-\epsilon$ for
  all $k\ge 0$. Then, for every $k$
\begin{equation*}
  w=\frac{1}{2(N_k+2)}\sum_{l=1}^{(N_k+2)d}\frac{m_l}{1-\eta_{N_k} m_l}\le\frac{1}{2(N_k+2)}\sum_{l=1}^{(N_k+2)d}\frac{m_l}{1-(\lambda_+-\epsilon) m_l}.
\end{equation*}
By sending $k$ to infinity and by combining (\ref{eq:CGFeig}) with
(\ref{eq:limder}), from here we would get
$w\le\varphi'(\lambda_+-\epsilon)\le d_+$, which contradicts the
assumption $w>d_+$. Another property of $\eta_N$ is that
\begin{equation}
\liminf_{N\uparrow\infty}\frac{1}{N+2}\ln\Ex\big[e^{\eta_N W_{N+2}}\big]\ge \varphi_+.
\label{eq:liminfeta}
\end{equation}
In order to verify this bound, fix $\lambda\in(0,\lambda_+)$ and bear
in mind that $\eta_N\ge\lambda$ for all sufficiently large $N$ as
$\lim_{N\uparrow\infty}\eta_N=\lambda_+$, so that
\begin{equation*}
  -\ln(1-\eta_N m_l)\ge-\ln(1-\lambda m_l)+(\eta_N-\lambda)\min\{0,m_1,\ldots,m_{(N+2)d}\}
\end{equation*}
for $l$ and sufficiently large $N$. Then, for all sufficiently large
$N$ we have
\begin{align}
  \nonumber
  \frac{1}{N+2}\ln\Ex\big[e^{\eta_N W_{N+2}}\big]&=-\frac{1}{2(N+2)}\sum_{l=1}^{(N+2)d}\ln(1-\eta_N m_l)\\
  \nonumber
  &\ge-\frac{1}{2(N+2)}\sum_{l=1}^{(N+2)d}\ln(1-\lambda m_l)+(\eta_N-\lambda)d\min\{0,m_1,\ldots,m_{(N+2)d}\}\\
  \nonumber
  &=\frac{1}{N+2}\ln\Ex\big[e^{\lambda W_{N+2}}\big]+(\eta_N-\lambda)d\min\{0,m_1,\ldots,m_{(N+2)d}\}.
\end{align}
By sending $N$ to infinity and by recalling that
$\min\{m_1,\ldots,m_{(N+2)d}\}$ approaches $\xi_-$ in this limit,
Proposition \ref{prop:CGF} shows that
\begin{equation*}
\liminf_{N\uparrow\infty}\frac{1}{N+2}\ln\Ex\big[e^{\eta_N W_{N+2}}\big]\ge \varphi(\lambda)+(\lambda_+-\lambda)d\min\{0,\xi_-\},
\end{equation*}
which demonstrates (\ref{eq:liminfeta}) once $\lambda$ is sent
$\lambda_+$.

We now move to bound (\ref{lower_bound}) and put $\eta_N$ into
context. Fix $\delta>0$ and pick $\epsilon\in(0,\delta)$. For all
sufficiently large $N$, $\eta_N$ is positive as
$\lim_{N\uparrow\infty}\eta_N=\lambda_+$ and we have
\begin{align}
  \nonumber
  \prob\bigg[\frac{W_{N+2}}{N+2}\in(w-\delta,w+\delta)\bigg]&\ge
  e^{-(N+2)(w+\epsilon)\eta_N}\,\Ex\bigg[e^{\eta_N W_{N+2}}\mathds{1}_{\big\{\frac{W_{N+2}}{N+2}\in(w-\epsilon,w+\epsilon)\big\}}\bigg]\\
  \nonumber
  &=e^{-(N+2)(w+\epsilon)\eta_N}\,\Ex\big[e^{\eta_N W_{N+2}}\big]\,\prob_{\eta_N,N+2}\bigg[\bigg|\frac{W_{N+2}}{N+2}-w\bigg|<\epsilon\bigg],
\end{align}
where $\prob_{\eta_N,N+2}$ is the probability measure (\ref{new_prob})
associated with $\eta_N$.  This bound, together with
(\ref{eq:liminfeta}), yields
\begin{align}
  \nonumber
  \liminf_{N\uparrow\infty}\frac{1}{N}\ln\prob\bigg[\frac{W_N}{N}\in(w-\delta,w+\delta)\bigg]&\ge \varphi_+-w\lambda_+-\epsilon\lambda_+\\
  \nonumber
  &+\liminf_{N\uparrow\infty}\frac{1}{N+2}\ln\prob_{\eta_N,N+2}\bigg[\bigg|\frac{W_{N+2}}{N+2}-w\bigg|<\epsilon\bigg].
\end{align}
This way, as $w\lambda_+-\varphi_+=I(w)$, we get at the lower large
deviation bound (\ref{lower_bound}) from here if we can prove that
\begin{equation}
 \lim_{\epsilon\downarrow 0}\,\liminf_{N\uparrow\infty}\frac{1}{N+2}\ln\prob_{\eta_N,N+2}\bigg[\bigg|\frac{W_{N+2}}{N+2}-w\bigg|<\epsilon\bigg]=0.
\label{eq:limetaNultimo}
\end{equation}
Verifying (\ref{eq:limetaNultimo}) is our last task. To this aim, we
resort to the following result, which was introduced by Bryc and Dembo
(see \cite{Dembo_Sol}, Lemma 2) to deal with a similar problem.
\begin{lemma}
\label{lem:Dembo}
If $\{Z_l\}_{l\ge 1}$ is a sequence of i.i.d.\ random variables with
mean zero, finite second moment, and positive probability density
function at 0 with respect to a probability measure $P$, then for each
$\epsilon>0$ there exists $p>0$ such that the following property
holds:
\begin{equation*}
P\Bigg[\Bigg|\sum_{l\ge 1}a_lZ_l\Bigg|<\epsilon\Bigg]\ge p
\end{equation*}
for any numerical sequence $\{a_l\}_{l\ge 1}$ such that $\sum_{l\ge 1}|a_l|\le 1$.
\end{lemma}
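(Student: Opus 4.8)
The plan is to split the absolutely convergent series $\sum_{l\ge1}a_lZ_l$ into a \emph{head} of fixed finite length $m=m(\epsilon)$ and a \emph{tail} $\sum_{l>m}a_lZ_l$, to estimate the tail uniformly by an $L^2$/Chebyshev argument, to estimate the head through the positivity of the density of $Z_1$ at the origin, and to combine the two pieces by independence. Write $\sigma^2:=\mathrm{Var}(Z_1)<\infty$; the case $\sigma=0$ is trivial, and positivity of the density of $Z_1$ at $0$ gives $q_t:=P[|Z_1|<t]>0$ for every $t>0$. Since $\sum_l|a_l|\le1$ implies $\sum_la_l^2\le\big(\sum_l|a_l|\big)^2\le1$, the series converges in $L^2$ and a.s.; and since the $Z_l$ are i.i.d., the law of $\sum_la_lZ_l$ is unchanged if we replace $\{a_l\}$ by its decreasing rearrangement, so I would assume from the start $|a_1|\ge|a_2|\ge\cdots$.

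I would then set $m:=\lceil32\sigma^2/\epsilon^2\rceil$, a number depending only on $\epsilon$ and the law of $Z_1$. The monotone ordering yields $(m+1)|a_{m+1}|\le\sum_{l=1}^{m+1}|a_l|\le1$, hence $\sum_{l>m}a_l^2\le|a_{m+1}|\sum_{l>m}|a_l|\le1/(m+1)$, so that $\mathrm{Var}\big(\sum_{l>m}a_lZ_l\big)=\sigma^2\sum_{l>m}a_l^2\le\sigma^2/m\le\epsilon^2/32$. Chebyshev's inequality then gives $P\big[\big|\sum_{l>m}a_lZ_l\big|\ge\epsilon/2\big]\le1/8$, \emph{uniformly} over all admissible coefficient sequences. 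For the head, on the event $\{|Z_l|<\epsilon/(2m)$ for every $l\le m\}$ one has $\big|\sum_{l=1}^ma_lZ_l\big|\le\frac{\epsilon}{2m}\sum_{l=1}^m|a_l|\le\epsilon/2$, whence by independence $P\big[\big|\sum_{l=1}^ma_lZ_l\big|<\epsilon/2\big]\ge q_{\epsilon/(2m)}^{\,m}>0$, again a bound depending only on $\epsilon$ and the law of $Z_1$.

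To conclude, the head $\sum_{l=1}^ma_lZ_l$ and the tail $\sum_{l>m}a_lZ_l$ are independent, being functions of disjoint subfamilies of $\{Z_l\}_{l\ge1}$, and $\{|\mathrm{head}|<\epsilon/2\}\cap\{|\mathrm{tail}|<\epsilon/2\}\subseteq\{|\sum_la_lZ_l|<\epsilon\}$; multiplying the two lower bounds gives $P\big[\big|\sum_{l\ge1}a_lZ_l\big|<\epsilon\big]\ge\frac78\,q_{\epsilon/(2m)}^{\,m}=:p>0$, with $p$ depending on $\epsilon$ alone, which is the assertion.

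The main obstacle is conceptual rather than computational: the sum has infinitely many terms and the coefficients may individually be arbitrarily small, so the naive estimate ``$|Z_l|<\epsilon$ for every relevant $l$'' is useless, since the bound it produces degrades like $q^n$ with $n\to\infty$. The remedy is to let the truncation length $m$ depend on $\epsilon$ only; the quantitative fact that makes this possible is the elementary inequality $\sum_{l>m}a_l^2\le1/(m+1)$ for decreasingly ordered coefficients with $\sum_l|a_l|\le1$, which renders the $L^2$ control of the tail uniform in the sequence. Everything else is a routine assembly of Chebyshev's inequality, independence, and the positivity of the density at $0$.
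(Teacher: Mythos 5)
Your proof is correct. Note first that the paper itself does not prove this lemma: it is imported verbatim from Bryc and Dembo (\cite{Dembo_Sol}, Lemma 2), so what you have produced is a self-contained argument for a statement the authors only cite. Your argument is sound: after passing to the decreasing rearrangement (legitimate, since the $Z_l$ are i.i.d.\ and the series converges unconditionally in $L^2$, so the law of the sum is rearrangement-invariant), the inequality $\sum_{l>m}a_l^2\le|a_{m+1}|\sum_{l>m}|a_l|\le 1/(m+1)$ makes the Chebyshev control of the tail uniform over all admissible coefficient sequences, the head of length $m=m(\epsilon)$ is handled by $q_{\epsilon/(2m)}^m>0$ using the positivity of the density near the origin (the hypothesis should of course be read as the density being positive, or bounded below, on a neighbourhood of $0$, so that $q_t>0$ for all $t>0$), and independence of head and tail yields $p=\tfrac{7}{8}\,q_{\epsilon/(2m)}^m$, which depends only on $\epsilon$ and the law of $Z_1$. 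The constants check out: $\mathrm{Var}\big(\sum_{l>m}a_lZ_l\big)\le\sigma^2/m\le\epsilon^2/32$ gives $P\big[\big|\sum_{l>m}a_lZ_l\big|\ge\epsilon/2\big]\le 1/8$. This is essentially the same head/tail mechanism as in Bryc and Dembo's original proof (finitely many ``large'' coefficients treated through the density, the remainder through an $L^2$ bound made uniform by the $\ell^1$ constraint), so your reconstruction matches the standard route; its value here is precisely that it makes the paper's borrowed ingredient self-contained.
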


Let $\{Y_l\}_{l\ge 1}$ be a sequence of independent standard Gaussian
random variables with respect to a probability measure $P$.  Lemma
\ref{lem:Dembo} ensures that for each $\epsilon>0$ there exists $p>0$
with the property that
\begin{equation}
P\Bigg[\Bigg|\sum_{l\ge 1}a_l\big(Y_l^2-1\big)\Bigg|<\frac{\epsilon}{1+|w|+3d|\xi_-|}\Bigg]\ge p
\label{bound_Bryc}
\end{equation}
for any numerical sequence $\{a_l\}_{l\ge 1}$ such that $\sum_{l\ge
  1}|a_l|\le 1$, $\xi_-$ being the number introduced by Lemma
\ref{lem:lambdaset}. We make use of property (\ref{bound_Bryc}) to
prove (\ref{eq:limetaNultimo}). Since the real symmetric matrix
$\mathsf{I}-\eta_N\mathsf{A}^\top\mathsf{M}_N\mathsf{A}\in\mathsf{BL}_{N+2,N+2}$
has positive eigenvalues $1-\eta_Nm_1,\ldots,1-\eta_Nm_{(N+2)d}$, if
we build a diagonal matrix $\mathsf{D}$ with
$\sqrt{1-\eta_Nm_1},\ldots,\sqrt{1-\eta_Nm_{(N+2)d}}$ on the diagonal,
then
$\mathsf{I}-\eta_N\mathsf{A}^\top\mathsf{M}_N\mathsf{A}=\mathsf{O}^\top\mathsf{D}^2\mathsf{O}$
with an orthogonal matrix $\mathsf{O}\in\mathsf{BL}_{N+2,N+2}$. This
way, if we write $W_{N+2}=(1/2)\langle X,\mathsf{M}_NX\rangle$ with
$X:=(X_1,\ldots,X_{N+2})$, then standard manipulations of Gaussian
integrals yield for all $k\in(\Rl^d)^{N+2}$
\begin{align}
  \nonumber
  \Ex_{\eta_N,N+2}\big[e^{\mathrm{i}\langle k,\mathsf{D}\mathsf{O}\mathsf{A}^{-1}X\rangle}\big]&=
  \frac{\Ex[e^{\mathrm{i}\langle (\mathsf{A}^\top)^{-1}\mathsf{O}^\top\mathsf{D}k,X\rangle+\frac{1}{2}\eta_N\langle X,\mathsf{M}_NX\rangle}]}{\Ex[e^{\frac{1}{2}\eta_N\langle X,\mathsf{M}_NX\rangle}]}\\
  \nonumber
  &=e^{-\frac{1}{2}\langle (\mathsf{A}^\top)^{-1}\mathsf{O}^\top\mathsf{D}k,(\Sigma_N^{-1}-\eta_N\mathsf{M}_N)^{-1}(\mathsf{A}^\top)^{-1}\mathsf{O}^\top\mathsf{D}k\rangle}\\
  \nonumber
  &=e^{-\frac{1}{2}\langle \mathsf{O}^\top\mathsf{D}k,(\mathsf{I}-\eta_N\mathsf{A}^\top\mathsf{M}_N\mathsf{A})^{-1}\mathsf{O}^\top\mathsf{D}k\rangle}=e^{-\frac{1}{2}\langle k,k\rangle}.
\end{align}
This formula states that the characteristic function of the random
vector $Y:=\mathsf{D}\mathsf{O}\mathsf{A}^{-1}X$ with respect to the
probability measure $\prob_{\eta_N,N+2}$ is the characteristic
function of $(N+2)d$ independent standard Gaussian random variables.
Thus, the components $Y_1,\ldots,Y_{(N+2)d}$ of $Y$ are independent
standard Gaussian random variables with respect to the probability
measure $\prob_{\eta_N,N+2}$. It follows from (\ref{bound_Bryc}) that
for each $\epsilon>0$ there exists $p>0$ with the property that
\begin{equation}
\prob_{\eta_N,N+2}\Bigg[\Bigg|\sum_{l=1}^{(N+2)d}a_l\big(Y_l^2-1\big)\Bigg|<\frac{\epsilon}{1+|w|+3d|\xi_-|}\Bigg]\ge p
\label{bound_Bryc1}
\end{equation}
for all $N\ge 1$ and real numbers $a_1,\ldots,a_{(N+2)d}$ such that
$\sum_{l=1}^{(N+2)d}|a_l|\le 1$.  Let us observe now that
\begin{equation*}
  W_{N+2}=\frac{1}{2}\langle  X,\mathsf{M}_NX\rangle
  =\frac{1}{2}\langle\mathsf{O}^\top\mathsf{D}^{-1}Y,\mathsf{A}^\top\mathsf{M}_N\mathsf{A}\mathsf{O}^\top\mathsf{D}^{-1}Y\rangle
  =\frac{1}{2}\sum_{l=1}^{(N+2)d}\frac{m_l}{1-\eta_N m_l}Y_l^2.
\end{equation*}
This identity combined with (\ref{eqetaN}) shows that for all
$\epsilon>0$ and sufficiently large $N$
\begin{equation}
  \prob_{\eta_N,N+2}\bigg[\bigg|\frac{W_{N+2}}{N+2}-w\bigg|<\epsilon\bigg]=
  \prob_{\eta_N,N+2}\Bigg[\Bigg|\sum_{l=1}^{(N+2)d}a_l\big(Y_l^2-1\big)\Bigg|<\frac{\epsilon}{1+|w|+3d|\xi_-|}\Bigg],
\label{bound_Bryc2}
\end{equation}
where, for $l=1,\ldots,(N+2)d$, we have set
\begin{equation*}
a_l:=\frac{1}{2(N+2)(1+|w|+3d|\xi_-|)}\frac{m_l}{1-\eta_N m_l}.
\end{equation*}
We have $\sum_{l=1}^{(N+2)d}|a_l|\le 1$ for all sufficiently large
$N$. In fact, since
\begin{equation*}
\frac{|m_l|}{1-\eta_N m_l}\le\frac{m_l}{1-\eta_N m_l}-2\min\big\{0,m_1,\ldots,m_{(N+2)d}\big\}
\end{equation*}
for every $l$, by invoking (\ref{eqetaN}) and by recalling that
$\min\{m_1,\ldots,m_{(N+2)d}\}$ approaches $\xi_-$ at large $N$, for
all sufficiently large $N$ we find
\begin{align}
  \nonumber
  (1+|w|+3d|\xi_-|)\sum_{l=1}^{(N+2)d}|a_l|&=\frac{1}{2(N+2)}\sum_{l=1}^{(N+2)d}\frac{|m_l|}{1-\eta_N m_l}\\
  \nonumber
  &\le\frac{1}{2(N+2)}\sum_{l=1}^{(N+2)d}\frac{m_l}{1-\eta_N m_l}-2d\min\big\{0,m_1,\ldots,m_{(N+2)d}\big\}\\
  \nonumber
  &\le w+2d\Big|\min\big\{m_1,\ldots,m_{(N+2)d}\big\}\Big|\le  |w|+3d|\xi_-|.
\end{align}
In conclusion, by comparing (\ref{bound_Bryc2}) with
(\ref{bound_Bryc1}) we realize that for each $\epsilon>0$ there exists
$p>0$ such that 
\begin{equation*}
  \prob_{\eta_N,N+2}\bigg[\bigg|\frac{W_{N+2}}{N+2}-w\bigg|<\epsilon\bigg]\ge p
\end{equation*}
for all sufficiently large $N$. This bound proves
(\ref{eq:limetaNultimo}).

\section{Proof of Theorem \ref{LDP_eN}}
\label{sec:simmetria}

We know that the entropy production $Ne_N$ is the quadratic functional
$W_N$ corresponding to the matrices $L:=I-\Sigma_o^{-1}-S^\top S$,
$U:=0$, $R:=\Sigma_o^{-1}+S^\top S-I$, and $V:=S-S^\top$. Part 1 of
the theorem then follows from Theorem \ref{main} with the rate
function $I$ that maps $w\in\Rl$ in
$\sup_{\lambda\in(\lambda_-,\lambda_+)}\{w\lambda-\varphi(\lambda)\}$.
It remains to verify the Gallavotti-Cohen symmetry stated by part 2.

Formula (\ref{F_e}) shows that the Hermitian matrices
$F_\lambda(\theta)$ associated with the entropy production satisfy
$F_{-\lambda-1}(\theta)=F_\lambda(2\pi-\theta)$ for all
$\lambda\in\Rl$ and $\theta\in[0,2\pi]$. According to (\ref{def:infr})
and (\ref{def:phi}), this identity immediately gives
$f_{-\lambda-1}=f_\lambda$ for any $\lambda$ and
$\varphi(-\lambda-1)=\varphi(\lambda)$ for any $\lambda$ such that
$f_{-\lambda-1}=f_\lambda>0$. We shall show in a moment that
$\lambda_-=-\lambda_+-1$. It follows that for every $w\in\Rl$
\begin{align}
  \nonumber
  I(-w)-w&=\sup_{\lambda\in(\lambda_-,\lambda_+)}\big\{w(-\lambda-1)-\varphi(\lambda)\big\}\\
  \nonumber
  &=\sup_{\lambda\in(-\lambda_+-1,-\lambda_--1)}\big\{w\lambda-\varphi(-\lambda-1)\big\}=\sup_{\lambda\in(\lambda_-,\lambda_+)}\big\{w\lambda-\varphi(\lambda)\big\}=I(w),
\end{align}
which demonstrates the Gallavotti-Cohen symmetry for $I$.

Let us verify that $\lambda_-=-\lambda_+-1$. Recalling that
$\lambda_-=\inf\{\Lambda\}$ and $\lambda_+=\sup\{\Lambda\}$, $\Lambda$
being the set defined by (\ref{def:Lambda_set}), it suffices to prove
that $-\lambda-1\in\Lambda$ whenever $\lambda\in\Lambda$. Fix
$\lambda\in\Lambda$. Then, $f_{-\lambda-1}=f_\lambda>0$, which implies
that the matrices $\Phi_{-\lambda-1}(n)$ given by (\ref{def:Fourierc})
and $H_{-\lambda-1}$, $K_{-\lambda-1}$, $\mathcal{L}_{-\lambda-1}$,
and $\mathcal{R}_{-\lambda-1}$ introduced by Lemma \ref{lem:tech} are
well-defined. The identity
$F_{-\lambda-1}(\theta)=F_\lambda(2\pi-\theta)$ shows that
$\Phi_{-\lambda-1}(n)=\Phi_\lambda(-n)=\Phi_\lambda^\dagger(n)$ for
all $n\in\mathbb{Z}$. The latter entails that $H_{-\lambda-1}$ and
$K_\lambda$ are related by the law
\begin{equation*}
 H_{-\lambda-1}=I+\big[(\lambda+1)S^\top-\lambda S\big]\Phi_{-\lambda-1}(1)=I+\big[(\lambda+1)S^\top-\lambda S\big]\Phi_\lambda^\dagger(1)=K_\lambda^\dagger.
\end{equation*}
This law induces a relationship between the matrices
$\mathcal{L}_{-\lambda-1}$ and $\mathcal{R}_\lambda$. In fact
\begin{align}
  \nonumber
  \mathcal{L}_{-\lambda-1}&=(\lambda+1)I-\lambda(\Sigma_o^{-1}+S^\top S)
  -\big[(\lambda+1)S-\lambda S^\top\big]\Phi_{-\lambda-1}(0)H_{-\lambda-1}^{-1}\big[(\lambda+1)S^\top-\lambda S\big]\\
  \nonumber
  &=(\lambda+1)I-\lambda(\Sigma_o^{-1}+S^\top S)
  -\big[(\lambda+1)S-\lambda S^\top\big]\Phi_\lambda(0)(K_\lambda^{-1})^\dagger\big[(\lambda+1)S^\top-\lambda S\big],
\end{align}
which, by taking adjoint on both the sides and by bearing in mind that
$\mathcal{L}_\lambda$ is Hermitian, yields
\begin{equation*}
\mathcal{L}_{-\lambda-1}=(\lambda+1)I-\lambda(\Sigma_o^{-1}+S^\top S)
  -\big[(\lambda+1)S-\lambda S^\top\big]K_\lambda^{-1}\Phi_\lambda(0)\big[(\lambda+1)S^\top-\lambda S\big]=\mathcal{R}_\lambda.
\end{equation*}
Since $\mathcal{R}_\lambda\succ 0$ by hypothesis, we obtain
$\mathcal{L}_{-\lambda-1}\succ 0$. By similar arguments, we find that
$K_{\lambda-1}=H_\lambda^\dagger$ and
$\mathcal{R}_{-\lambda-1}=\mathcal{L}_\lambda\succ 0$. In conclusion,
$f_{-\lambda-1}>0$, $\mathcal{L}_{-\lambda-1}\succ 0$, and
$\mathcal{R}_{-\lambda-1}\succ 0$, so that $-\lambda-1\in\Lambda$.

\appendix

\section{Proof of Lemma \ref{lem:quad}}
\label{proof:quad}

Fix $N\ge 1$. Let $\mu_N^+:=\prob[(X_1,\ldots,X_N)\in\cdot\,]$ and
$\mu_N^-:=\prob[(X_N,\ldots,X_1)\in\cdot\,]$ be the probability
measures on the Borel sets of $(\Rl^d)^N$ induced by the Gauss-Markov
chain $\X$. According to (\ref{evol_eq}), $\mu_N^+$ and $\mu_N^-$ are
the multivariate Gaussian distributions that have densities
\begin{equation*}
  \frac{d\mu_N^+}{d\ell}(x_1,\ldots,x_N):=
  \frac{e^{-\frac{1}{2}\langle x_1, \Sigma_o^{-1} x_1\rangle}}{\sqrt{(2\pi)^d\det\Sigma_o}}\prod_{n=2}^N\frac{1}{\sqrt{(2\pi)^d}}\,e^{-\frac{1}{2}\|x_n-Sx_{n-1}\|^2}
\end{equation*}
and
\begin{equation*}
  \frac{d\mu_N^-}{d\ell}(x_1,\ldots,x_N):=\frac{d\mu_N^+}{d\ell}(x_N,\ldots,x_1)=
  \frac{e^{-\frac{1}{2}\langle x_N, \Sigma_o^{-1} x_N\rangle}}{\sqrt{(2\pi)^d\det\Sigma_o}}\prod_{n=2}^N\frac{1}{\sqrt{(2\pi)^d}}\,e^{-\frac{1}{2}\|x_{n-1}-Sx_n\|^2}
\end{equation*}
with respect to the Lebesgue measure $\ell$. Thus, $\mu_N^+\ll
\mu_N^-\ll\ell$ and standard results about measure theory \cite{Rudin}
give for all $(x_1,\ldots,x_N)\in(\Rl^d)^N$
\begin{align}
  \nonumber
\ln\bigg[\frac{d\mu_N^+}{d\mu_N^-}(x_1,\ldots,x_N)\bigg]&=\ln\bigg[\frac{d\mu_N^+}{d\ell}(x_1,\ldots,x_N)\bigg/\frac{d\mu_N^-}{d\ell}(x_1,\ldots,x_N)\bigg]\\
\nonumber
&=\frac{1}{2}\langle x_1,(I-\Sigma_o^{-1}-S^\top S)x_1\rangle+\frac{1}{2}\langle x_N,(\Sigma_o^{-1}+S^\top S-I)x_N\rangle \\
    \nonumber
    &+\sum_{n=2}^N\langle x_n,(S-S^\top)x_{n-1}\rangle.
\end{align}

\section{Proof of Lemma \ref{lem:norm}}
\label{proof:norm}

For all $N\ge 1$ and $z=(z_1,\ldots,z_{N+2})\in(\Cm^d)^{N+2}$ we have
\begin{align}
\nonumber
\big\|\mathsf{Q}_Nz\big\|^2&=\big\|Az_1+E^\dagger z_2\big\|^2+\sum_{n=2}^{N+1}\big\|Ez_{n-1}+Dz_n+E^\dagger z_{n+1}\big\|^2+\big\|Ez_{N+1}+Bz_{N+2}\big\|^2\\
\nonumber
&\le\Big(\|A\|\|z_1\|+\|E\|\|z_2\|\Big)^2+\sum_{n=2}^{N+1}\Big(\|E\|\|z_{n-1}\|+\|D\|\|z_n\|+\|E\|\|z_{n+1}\|\Big)^2\\
\nonumber
&+\Big(\|E\|\|z_{N+1}\|+\|B\|\|z_{N+2}\|\Big)^2\\
\nonumber
&\le 2\|A\|^2\|z_1\|^2+2\|E\|^2\|z_2\|^2+3\sum_{n=2}^{N+1}\Big(\|E\|^2\|z_{n-1}\|^2+\|D\|^2\|z_n\|^2+\|E\|^2\|z_{n+1}\|^2\Big)\\
\nonumber
&+2\|E\|^2\|z_{N+1}\|^2+2\|B\|^2\|z_{N+2}\|^2\\
\nonumber
&\le\Big(2\|A\|^2+3\|D\|^2+2\|B\|^2+6\|E\|^2\Big)\|z\|^2.
\end{align}

\section{Proof of Lemma \ref{primo}}
\label{proof:primo}

Fix $N\ge 1$.  Assume that there exists $q>0$ such that $\langle
z,\mathsf{Q}_Nz\rangle\ge q\langle z,z\rangle$ for all
$z\in(\Cm^d)^{N+2}$.  Bearing in mind (\ref{QconT}) and by writing $z$
as $(a,t_1,\ldots,t_N,b)$ with $s:=(a,b)\in(\Cm^d)^2$ and
$t:=(t_1,\ldots,t_N)\in(\Cm^d)^N$, this condition reads
\begin{equation*}
  \langle a,Aa\rangle+2\langle t,\mathsf{C}Ea\rangle+\langle t,\mathsf{T}_Nt\rangle
  +2\langle b,E\mathsf{R}t\rangle+\langle b,Bb\rangle\ge q\langle a,a\rangle+q\langle t,t\rangle+q\langle b,b\rangle.
\end{equation*}
This way, by setting $a:=0$ and $b:=0$ we find $\langle
t,\mathsf{T}_Nt\rangle\ge q\langle t,t\rangle$ for any
$t\in(\Cm^d)^N$. This shows in particular that $\mathsf{T}_N$ is
invertible. By setting
$t:=-\mathsf{T}_N^{-1}\mathsf{C}Ea-\mathsf{T}_N^{-1}\mathsf{R}^\dagger
E^\dagger b$ we obtain
\begin{equation*}
  \langle a,\big(A-E^\dagger\mathsf{C}^\dagger\mathsf{T}_N^{-1}\mathsf{C}E\big)a\rangle
  -2\langle a,E^\dagger\mathsf{C}^\dagger\mathsf{T}_N^{-1}\mathsf{R}^\dagger E^\dagger b\rangle
  +\langle b,\big(B-E\mathsf{R}\mathsf{T}_N^{-1}\mathsf{R}^\dagger E^\dagger \big)b\rangle\ge q\langle a,a\rangle+q\langle b,b\rangle,
\end{equation*}
that is $\langle s,\mathsf{S}_Ns\rangle\ge q\langle s,s\rangle$ for
all $s\in(\Cm^d)^2$.  Part 1 is thus verified.

As far as part 2 is concerned, if $\mathsf{T}_N$ is invertible,
then we can write down the identity
\begin{equation}
\mathsf{Q}_N=
\mathsf{L}^\dagger
\begin{pmatrix}
   A-E^\dagger\mathsf{C}^\dagger\mathsf{T}_N^{-1}\mathsf{C}E & \mathsf{0}& -E^\dagger\mathsf{C}^\dagger\mathsf{T}_N^{-1}\mathsf{R}^\dagger E^\dagger \\
     \mathsf{0} & \mathsf{T}_N & \mathsf{0} \\
    -E\mathsf{R}\mathsf{T}_N^{-1}\mathsf{C}E & \mathsf{0}& B-E\mathsf{R}\mathsf{T}_N^{-1}\mathsf{R}^\dagger E^\dagger
\end{pmatrix}
\mathsf{L}
\label{QconL}
\end{equation}
with
\begin{equation*}
\mathsf{L}:=
\begin{pmatrix}
   I & \mathsf{0} & 0 \\
    \mathsf{T}_N^{-1}\mathsf{C}E & \mathsf{I} & \mathsf{T}_N^{-1}\mathsf{R}^\dagger E^\dagger \\
    0 & \mathsf{0} & I
\end{pmatrix}
\in\mathsf{BL}_{N+2,N+2}.
\end{equation*}
Since $\det \mathsf{L}=1$, it follows by permutations of rows and
columns that
\begin{align}
  \nonumber
\det \mathsf{Q}_N&=
\det\begin{pmatrix}
   A-E^\dagger\mathsf{C}^\dagger\mathsf{T}_N^{-1}\mathsf{C}E & \mathsf{0}& -E^\dagger\mathsf{C}^\dagger\mathsf{T}_N^{-1}\mathsf{R}^\dagger E^\dagger \\
     \mathsf{0} & \mathsf{T}_N & \mathsf{0} \\
    -E\mathsf{R}\mathsf{T}_N^{-1}\mathsf{C}E & \mathsf{0}& B-E\mathsf{R}\mathsf{T}_N^{-1}\mathsf{R}^\dagger E^\dagger
\end{pmatrix}\\
\nonumber
&=\det\begin{pmatrix}
  \mathsf{T}_N & \mathsf{0} & \mathsf{0} \\
  \mathsf{0} & A-E^\dagger\mathsf{C}^\dagger\mathsf{T}_N^{-1}\mathsf{C}E & -E^\dagger\mathsf{C}^\dagger\mathsf{T}_N^{-1}\mathsf{R}^\dagger E^\dagger \\
  \mathsf{0} & -E\mathsf{R}\mathsf{T}_N^{-1}\mathsf{C}E & B-E\mathsf{R}\mathsf{T}_N^{-1}\mathsf{R}^\dagger E^\dagger
\end{pmatrix}
=\det \mathsf{T}_N\cdot\det\mathsf{S}_N.
\end{align}
Moreover, given $z\in(\Cm^d)^{N+2}$, by writing
$Lz=(a,t_1,\ldots,t_N,b)$ with $s:=(a,b)\in(\Cm^d)^2$ and
$t:=(t_1,\ldots,t_N)\in(\Cm^d)^N$, we realize from (\ref{QconL}) that
$\langle z,\mathsf{Q}_Nz\rangle=\langle t,\mathsf{T}_Nt\rangle+\langle
s,\mathsf{S}_Ns\rangle$. Thus, if $\mathsf{T}_N\succ\mathsf{0}$,
$\mathsf{S}_N\succ\mathsf{0}$, and $z\ne0$, then we have $\langle
z,\mathsf{Q}_Nz\rangle>0$ since $\mathsf{L}$ is invertible.

\section{Proof of Lemma \ref{secondo}}
\label{proof:secondo}

Part 1 is immediate since $r(\mathsf{T}_N)$ is non-increasing with
respect to $N$. In fact, given any $z=(z_1,\ldots,z_N)\in(\Cm^d)^N$,
by setting $\zeta:=(z_1,\ldots,z_N,0)\in(\Cm^d)^{N+1}$ we see that
$r(\mathsf{T}_{N+1})\langle z,z\rangle=r(\mathsf{T}_{N+1})\langle
\zeta,\zeta\rangle\le \langle \zeta,\mathsf{T}_{N+1}\zeta\rangle=
\langle z,\mathsf{T}_Nz\rangle$.  Part 3 is nothing but the Szeg\"o
theorem for the determinant of Hermitian block Toeplitz matrices (see
\cite{Toeplitz}, Theorem 7). Let us focus on part 2. Assume that
$r(\mathsf{T}_N)\ge t$ for all $N\ge 1$ and pick a positive continuous
function $\varphi$ with period $2\pi$ and a vector $u\in\Cm^d$. Due to
the assumed properties of $\varphi$, there exists a sequence
$\{p_N\}_{N\ge 0}$ of trigonometric polynomials that converges
uniformly to $\sqrt{\varphi}$, $p_N$ having degree $N$ (see
\cite{Rudin}, Theorem 4.25).  Write $p_N(\theta)$ as $\sum_{n=-N}^N
c_{N,n} e^{-\mathrm{i}n\theta}$ for each $N$ and $\theta$. Since
$r(\mathsf{T}_{2N+1})\ge t$, by setting
$\zeta_n=z_n:=c_{N,N-n+1}e^{\mathrm{i}(N+1)}u$ for
$n=1,\ldots,2N+1$ in (\ref{TFrel}) we obtain
\begin{align}
  \nonumber
  \frac{1}{2\pi}\int_0^{2\pi} \big\langle u,F(\theta)u\big\rangle\, p_N^2(\theta)\, d\theta&=
  \frac{1}{2\pi}\int_0^{2\pi} \Bigg\langle\sum_{n=1}^{2N+1}z_ne^{-\mathrm{i}n\theta},
  F(\theta)\sum_{n=1}^{2N+1} z_ne^{-\mathrm{i}n\theta}\Bigg\rangle \,d\theta\\
  \nonumber
  &=\langle z,\mathsf{T}_{2N+1}z\rangle\ge t\sum_{n=1}^{2N+1} \langle z_n,z_n\rangle=t\langle u,u\rangle\,\frac{1}{2\pi}\int_0^{2\pi}  p_N^2(\theta)\, d\theta
\end{align}
for all $N\ge 0$. By sending $N$ to infinity we get
\begin{equation*}
  \frac{1}{2\pi}\int_0^{2\pi} \big\langle u,F(\theta)u\big\rangle\, \varphi(\theta)\, d\theta
  \ge t\langle u,u\rangle\,\frac{1}{2\pi}\int_0^{2\pi} \varphi(\theta)\, d\theta.
\end{equation*}
The arbitrariness of $\varphi$ and $u$ shows that $\langle
z,F(\theta)z\rangle\ge t\langle z,z\rangle$ for all
$\theta\in[0,2\pi]$ and $z\in\Cm^d$.

Conversely, if $r(F(\theta))\ge t$ for every $\theta\in[0,2\pi]$, then
by invoking (\ref{TFrel}) again we can write for all $N\ge 1$ and
$z=(z_1,\ldots,z_N)\in(\Cm^d)^N$
  \begin{equation*}
\langle z,\mathsf{T}_Nz\rangle
  \ge \frac{t}{2\pi}\int_0^{2\pi} \Bigg\langle\sum_{n=1}^Nz_ne^{-\mathrm{i}n\theta},
  \sum_{n=1}^N z_ne^{-\mathrm{i}n\theta} \Bigg\rangle \,d\theta=t\sum_{n=1}^N \langle z_n,z_n\rangle.
\qedhere
  \end{equation*}

\section{Proof of Lemma \ref{lem:terzo}}
\label{proof:terzo}

Suppose for a moment that the matrix $H$ is invertible. Then, the
matrix $K$ is proved to be invertible by contradiction. In fact, if
$K$ is not invertible, then there exists a vector $u\in\Cm^d$
different from $0$ such that $Ku=[I-\Phi(1)E]u=0$.  We must have
$Eu\ne 0$, otherwise $u=0$. Since $HE=EK$, we get $HEu=0$ with $Eu\ne
0$, which contradicts the assumption that $H$ is invertible.

Let us demonstrate now that the matrix $H$ is invertible. This will
prove part 1 of the lemma. We proceed by contradiction. Suppose that
there exists a vector $u\in\Cm^d$ different from $0$ such that $Hu=0$.
Pick an arbitrary integer $N\ge 3$ and for $n=1,\ldots,N$ consider the
vectors
\begin{equation*}
  z_n:=\Phi(1-n)u=\frac{1}{2\pi}\int_0^{2\pi}F^{-1}(\theta)e^{\mathrm{i}(n-1)\theta}d\theta \,u.
\end{equation*}
We have $z_1\ne 0$ since $\Phi(0)$ is invertible.  We claim that
\begin{equation}
\mathsf{T}_N\begin{pmatrix}
z_1 \\
z_2\\
   \vdots \\
   z_{N-1} \\
   z_N
\end{pmatrix}
=
\begin{pmatrix}
Dz_1+E^\dagger z_2 \\
 E z_1+Dz_2+E^\dagger z_3\\
   \vdots \\
 E z_{N-2}+Dz_{N-1}+E^\dagger z_N \\
   Ez_{N-1}+Dz_N
\end{pmatrix}
=\begin{pmatrix}
0\\
   0 \\
   \vdots \\
   0 \\
   -E^\dagger \Phi(-N)u
\end{pmatrix}.
\label{tech1}
\end{equation}
Indeed, for $n=2,\ldots,N-1$ we have
\begin{align}
  \nonumber
  E z_{n-1}+Dz_n+E^\dagger z_{n+1}&
  =\frac{1}{2\pi}\int_0^{2\pi}\big[Ee^{-\mathrm{i}\theta}+D+E^\dagger e^{\mathrm{i}\theta}\big]F^{-1}(\theta)e^{\mathrm{i}(n-1)\theta} d\theta \,u\\
\nonumber
  &=\frac{1}{2\pi}\int_0^{2\pi}e^{\mathrm{i}(n-1)\theta} d\theta \,u=0
\end{align}
and
\begin{align}
  \nonumber
  Dz_1+E^\dagger z_2&=\frac{1}{2\pi}\int_0^{2\pi}\big[D+E^\dagger e^{\mathrm{i}\theta}\big]F^{-1}(\theta)\,d\theta \,u\\
  \nonumber
  &=\frac{1}{2\pi}\int_0^{2\pi}\big[F(\theta)-E e^{-\mathrm{i}\theta}\big]F^{-1}(\theta)\,d\theta \,u=\big[I-E\Phi(1)\big]u=Hu=0.
\end{align}
Finally, we see that
\begin{align}
  \nonumber
  Ez_{N-1}+Dz_N&=\frac{1}{2\pi}\int_0^{2\pi}\big[Ee^{-\mathrm{i}\theta}+D\big]F^{-1}(\theta)e^{\mathrm{i}(N-1)\theta} d\theta \,u\\
  \nonumber
  &=\frac{1}{2\pi}\int_0^{2\pi}\big[F(\theta)-E^\dagger e^{\mathrm{i}\theta}\big]F^{-1}(\theta)e^{\mathrm{i}(N-1)\theta} d\theta \,u
  =-E^\dagger \Phi(-N)u.
\end{align}
Due to (\ref{tech1}), it follows from (\ref{TFrel}) with
$\zeta=z:=(z_1,\ldots,z_N)\in(\Cm^d)^N$ and the hypothesis
$t:=\inf_{\theta\in[0,2\pi]}\{r(F(\theta))\}>0$ that
\begin{equation*}
  -\Big\langle E^\dagger \Phi(1-N)u, E^\dagger \Phi(-N)u\Big\rangle =\langle z,\mathsf{T}_Nz\rangle
  \ge t\sum_{n=1}^N \langle z_n,z_n\rangle\ge t\langle z_1,z_1\rangle.
\end{equation*}
This bound is absurd since $z_1\ne 0$ and
$\lim_{N\uparrow\infty}\Phi(-N)=0$ by the Riemann-Lebesgue lemma.

Let us move to part 2.  As
$\inf_{\theta\in[0,2\pi]}\{r(F(\theta))\}>0$, $\mathsf{T}_N$ is
invertible by Lemma \ref{secondo} and we can set
\begin{equation}
  \mathsf{T}_N^{-1}\mathsf{C}=:
  \begin{pmatrix}
    C_1 \\
    \vdots \\
    C_N \\
  \end{pmatrix}
  \label{ITC}
\end{equation}
and
\begin{equation}
  \mathsf{R}\mathsf{T}_N^{-1}=:
  \begin{pmatrix}
    R_1 & \cdots & R_N 
  \end{pmatrix}.
  \label{ITR}
\end{equation}
The matrices $\mathsf{C}$ and $\mathsf{R}$ were defined in
(\ref{defCmat}) and (\ref{defRmat}), respectively.  We have
$\mathsf{C}^\dagger \mathsf{T}_N^{-1}\mathsf{C}=C_1$ and
$\mathsf{R}\mathsf{T}_N^{-1}\mathsf{R}^\dagger=R_N$, which on the one
hand show that $C_1$ and $R_N$ are Hermitian and on the other hand
allow us to write
\begin{equation*}
\mathsf{S}_N=\begin{pmatrix}
    A-E^\dagger C_1E &  -E^\dagger R_1^\dagger E^\dagger \\
    -EC_NE &  B-ER_NE^\dagger
\end{pmatrix}.
\end{equation*}
Let us verify that $C_1$ approaches the matrix $\Phi(0)H^{-1}$ and
$R_N$ approaches the matrix $K^{-1}\Phi(0)$ when $N$ is sent to
infinity, whereas $C_N$ and $R_1$ approach 0. These facts prove part 2
of the lemma.

To begin with, we observe that since $H^\dagger$ and $K$ are
non-singular and $\lim_{N\uparrow\infty}\Phi(\pm N)=0$ by the
Riemann-Lebesgue lemma, the matrix
\begin{equation}
  \mathsf{Z}:=\begin{pmatrix}
    H^\dagger  & -\Phi(N)E \\
    -\Phi(-N)E^\dagger & K
  \end{pmatrix}
  \in\mathsf{BL}_{2,2}
  \label{defGmat}
\end{equation}
is invertible if $N>N_o$, $N_o\ge 2$ being a sufficiently large
integer.  Pick $N>N_o$. By multiplying (\ref{ITC}) by $\mathsf{T}_N$
on the left and (\ref{ITR}) by $\mathsf{T}_N$ on the right we
explicitly have
\begin{equation}
  \begin{cases}
    DC_1+E^\dagger C_2=I & \mbox{for }n=1,\\
    EC_{n-1}+DC_n+E^\dagger C_{n+1}=0 & \mbox{for }n=2,\ldots,N-1,\\
    EC_{N-1}+DC_N=0 & \mbox{for }n=N
  \end{cases}
  \label{eqTC}
\end{equation}
and
\begin{equation}
  \begin{cases}
   DR_1^\dagger+E^\dagger R_2^\dagger=0 & \mbox{for }n=1,\\
   ER_{n-1}^\dagger+DR_n^\dagger+E^\dagger R_{n+1}^\dagger=0 & \mbox{for }n=2,\ldots,N-1,\\
   ER_{N-1}^\dagger+DR_N^\dagger=I & \mbox{for }n=N.
  \end{cases}
  \label{eqTR}
\end{equation}
By multiplying the $n$th equation in (\ref{eqTC}) by
$e^{-\mathrm{i}n\theta}$ and then by carrying out the sum over $n$ we
get
\begin{equation*}
 F(\theta)\sum_{n=1}^N C_ne^{-\mathrm{i}n\theta}=Ie^{-\mathrm{i}\theta}+E^\dagger C_1+EC_Ne^{-\mathrm{i}(N+1)\theta},
\end{equation*}
which gives for $n=1,\ldots,N$
\begin{align}
  \nonumber
  C_n&=\frac{1}{2\pi}\int_0^{2\pi}F^{-1}(\theta)\Big[Ie^{\mathrm{i}(n-1)\theta}+E^\dagger C_1e^{\mathrm{i}n\theta}+EC_Ne^{\mathrm{i}(n-N-1)\theta}\Big]d\theta\\
  &=\Phi(1-n)+\Phi(-n)E^\dagger C_1+\Phi(N-n+1)EC_N.
  \label{eqTC1}
\end{align}
Similarly, (\ref{eqTR}) shows that for $n=1,\ldots,N$
\begin{equation}
  R_n^\dagger=\Phi(N-n)+\Phi(-n)E^\dagger R_1^\dagger+\Phi(N-n+1)ER_N^\dagger.
  \label{eqTR1}
\end{equation}
At this point, by setting $n:=1$ and $n:=N$ in (\ref{eqTC1}) and by
recalling that $C_1=C_1^\dagger$ we realize that
\begin{equation*}
  \mathsf{Z}
  \begin{pmatrix}
    C_1^\dagger \\
    C_N
  \end{pmatrix}
  =\begin{pmatrix}
    \Phi(0) \\
    \Phi(1-N)
  \end{pmatrix},
\end{equation*}
$\mathsf{Z}$ being the matrix defined in (\ref{defGmat}).
It follows that
\begin{equation*}
  \begin{pmatrix}
    C_1^\dagger \\
    C_N
  \end{pmatrix}
  =\mathsf{Z}^{-1}
  \begin{pmatrix}
    \Phi(0) \\
    \phi(1-N)
  \end{pmatrix}
\end{equation*}
as $\mathsf{Z}$ is invertible for $N>N_o$. Similarly, (\ref{eqTR1})
for $n:=1$ and $n:=N$ and the fact that $R_N^\dagger=R_N$ yield
\begin{equation*}
  \begin{pmatrix}
    R_1^\dagger \\
    R_N
  \end{pmatrix}
  =\mathsf{Z}^{-1}\begin{pmatrix}
    \Phi(N-1) \\
    \Phi(0)
  \end{pmatrix}.
\end{equation*}
This way, the Riemann-Lebesgue lemma entails that $C_1$ approaches
$\Phi(0)H^{-1}$ and $R_N$ approaches $K^{-1}\Phi(0)$ when $N$ is sent
to infinity, whereas $C_N$ and $R_1$ approach 0.

\section{Proof of Lemma \ref{lem:sigma}}
\label{proof:sigma}

As the spectral radius $\rho(S)$ of $S$ is smaller than $1$ by
hypothesis, Gelfand's formula for spectral radii gives
$\lim_{n\uparrow\infty}\|S^n\|^{\frac{1}{n}}=\rho(S)<1$. Then, there
exist $s\in(0,1)$ and a positive constant $c$ such that $\|S^n\|\le c
s^n$ for all $n\ge 0$. Let us show that the lemma holds with
$\sigma:=[1\wedge r(\Sigma_o^{-1})](1-s)^2c^{-2}$, which is positive
since, obviously, $r(\Sigma_o^{-1})>0$.  Fix $N\ge 1$ and, to begin
with, observe that
\begin{align}
  \nonumber
  r(\Sigma_N^{-1})&=\inf_{\substack{z\in(\Cm^d)^{N+2}\\z\ne 0}}\bigg\{\frac{\langle z_1,\Sigma_o^{-1}z_1\rangle+\sum_{n=2}^{N+2}\|z_n-Sz_{n-1}\|^2}
  {\sum_{n=1}^{N+2}\langle z_n,z_n\rangle}\bigg\}\\
  \nonumber
  &\ge 1\wedge r(\Sigma_o^{-1})\inf_{\substack{z\in(\Cm^d)^{N+2}\\z\ne 0}}\bigg\{\frac{\|z_1\|^2+\sum_{n=2}^{N+2}\|z_n-Sz_{n-1}\|^2}{\sum_{n=1}^{N+2}\langle z_n,z_n\rangle}\bigg\}.
\end{align}
Since for any $z=(z_1, \ldots, z_{N+2})\in(\Cm^d)^{N+2}$ there exists
$\zeta=(\zeta_1, \ldots, \zeta_{N+2})\in(\Cm^d)^{N+2}$ such that
$z_n=\sum_{k=1}^n S^{n-k}\zeta_k$ for each $n$, this bound yields
\begin{equation*}
  r(\Sigma_N^{-1})\ge 1\wedge r(\Sigma_o^{-1})\inf_{\substack{\zeta\in(\Cm^d)^{N+2}\\\zeta\ne 0}}\bigg\{\frac{\sum_{n=1}^{N+2}\|\zeta_n\|^2}
      {\sum_{n=1}^{N+2}\sum_{h=1}^n\sum_{k=1}^n\langle S^{n-h}\zeta_h,S^{n-k}\zeta_k\rangle}\bigg\}.
\end{equation*}
At this point, it suffices to invoke the Cauchy-Schwarz inequality to
conclude that for every $\zeta=(\zeta_1, \ldots,
\zeta_{N+2})\in(\Cm^d)^{N+2}$
\begin{align}
  \nonumber
  \sum_{n=1}^{N+2}\sum_{h=1}^n\sum_{k=1}^n\langle S^{n-h}\zeta_h,S^{n-k}\zeta_k\rangle &\le \sum_{n=1}^{N+2}\sum_{h=1}^n\sum_{k=1}^n\|S^{n-h}\zeta_h\|\|S^{n-k}\zeta_k\|\\
  \nonumber
  &\le c^2\sum_{n=1}^{N+2}\sum_{h=1}^n\sum_{k=1}^n s^{2n-h-k}\|\zeta_h\|\|\zeta_k\|\\
  \nonumber
  &\le \frac{c^2}{2}\sum_{n=1}^{N+2}\sum_{h=1}^n\sum_{k=1}^n s^{2n-h-k}\Big(\|\zeta_h\|^2+\|\zeta_k\|^2\Big)\\
  \nonumber
  &\le\frac{c^2}{(1-s)^2}\sum_{k=1}^{N+2} \|\zeta_k\|^2.
\qedhere
\end{align}

\section*{Acknowledgements}
The authors are grateful to Giuseppe Gonnella for suggesting the
problem of large deviations for the entropy production rate and
quadratic functionals of autoregressive models.

\section*{Data availability}

Data sharing is not applicable to this article as no new data were
created or analyzed in this study.

\end{document}